\else\usepackage[hypertex]{hyperref}\fi
\newtheorem{thm}{Theorem}[section]
\newtheorem{lem}[thm]{Lemma}
\newtheorem{prop}[thm]{Proposition}
\newtheorem{cor}[thm]{Corollary}
\newtheorem{ex}{Example}
\theoremstyle{definition}
\newtheorem{rk}[thm]{Remark}
\newtheorem*{rk*}{Remark}
\newtheorem{defn}[thm]{Definition}
\numberwithin{equation}{section}
\newcommand{\MC}{\lM_\C}
\newcommand{\tildeMC}{\wtl\lM_\C}
\newcommand{\C}{\mathbb{C}}
\newcommand{\MM}{\mathfrak{M}}
\newcommand{\R}{\mathbb{R}}
\newcommand{\LL}{\mathbb{L}}
\newcommand{\Jtilde}{\widetilde{J}} 
\newcommand{\Qtilde}{\widetilde{Q}} 
\newcommand{\That}{{\mathcal{T}}} 
\newcommand{\h}{\frac{1}{2}}
\newcommand{\hind}{\tilde{I}}
\newcommand{\hI}{\hat{I}}
\newcommand{\tI}{\tilde{I}}
\newcommand{\Z}{\mathbb{Z}}
\newcommand{\tZ}{\tilde{\mathbb{Z}}}
\newcommand{\mr}[1]{{\mathrm{#1}}}
\newcommand{\mca}[1]{{\mathcal{#1}}}
\newif\ifrem\remtrue
\def\mat#1{\ensuremath{#1}\xspace}
\def\DMO{\DeclareMathOperator}
\def\set#1{\mat{\{#1\}}}
\def\sets#1#2{\mat{\{#1\mid#2\}}}
\def\wtl{\widetilde}
\def\cC{\mathbb{C}}
\def\cH{\mathbb{H}}
\def\cL{\mathbb{L}}
\def\cN{\mathbb{N}}
\def\cR{\mathbb{R}}
\def\cZ{\mathbb{Z}}
\def\lM{\mathcal{M}}
\def\al{\mat{\alpha}}
\def\be{\mat{\beta}}
\def\De{\mat{\Delta}}
\def\la{\mat{\lambda}}
\def\hi{\mat{\chi}}
\def\si{\mat{\sigma}}
\def\vi{\mat{\varphi}}
\def\ze{\mat{\zeta}}
\def\gM{\mathfrak{M}}
\DMO\Hom{Hom}
\DMO\End{End}
\DMO\GL{GL}
\DMO\Aut{Aut}
\DMO\Exp{Exp}
\DMO\Log{Log}
\DMO\Pow{Pow}
\DMO\Id{Id}
\def\crit{\operatorname{crit}}
\def\udim{\operatorname{\underline\dim}}
\def\vir{\mathrm{vir}}
\def\im{\mathrm{im}}
\def\re{\mathrm{re}}
\def\pser#1{[\![#1]\!]} 
\def\dd{\mat{\partial}}
\def\ms{\backslash} 
\def\sb{\subset}
\def\n#1{\mat{\lvert#1\rvert}}
\def\inv{^{-1}}
\def\oh{\mat{\frac12}}
\def\ie{i.e.\ }
\def\GG{G}
\newcommand{\inj}{\hookrightarrow}
\newcommand{\surj}{\twoheadrightarrow}
\renewcommand{\phi}{{\varphi}}
\newcommand{\half}{\frac{1}{2}}
\begin{document}

\begin{abstract} 
We compute the motivic Donaldson--Thomas theory of a small crepant resolution of a toric Calabi-Yau $3$-fold.
\end{abstract}

\title[Motivic Donaldson--Thomas invariants of toric small crepant resolutions]{Motivic Donaldson--Thomas invariants of toric small crepant resolutions
}

\author[Morrison]{Andrew Morrison}
\author[Nagao]{Kentaro Nagao}


\maketitle

\setcounter{tocdepth}{1}
\tableofcontents
\thispagestyle{empty}

\section*{Introduction}

This paper is a continuation of \cite{mmns}. 
We study the {\it motivic Donaldson--Thomas invariants} 
of non-commutative and commutative crepant resolutions of the affine toric Calabi--Yau $3$-fold $\{XY-Z^{N_0}W^{N_1}\}\subset \mathbb{C}^4$.

\smallskip

A {\it Donaldson--Thomas} (DT) {\it invariant} of a Calabi--Yau $3$-fold $Y$ is a counting invariant of 
coherent sheaves on $Y$, introduced in \cite{thomas-dt} as a holomorphic analogue of the Casson invariant 
of a real $3$-manifold. A component of the moduli space of stable coherent sheaves on $Y$ carries 
a symmetric obstruction theory and a virtual fundamental cycle \cite{behrend-fantechi-intrinsic,behrend-fantechi}. 
A DT invariant of a compact $Y$ is then defined as the integral of the constant function $1$ over the virtual 
fundamental cycle of the moduli space.

It is known that the moduli space of coherent sheaves on $Y$ can be locally described as the critical locus of a 
function, the {\it holomorphic Chern--Simons functional} (see \cite{joyce-song}). Behrend provided 
a description of DT invariants in terms of the Euler characteristic of the {\it Milnor fiber} of the CS 
functional~\cite{behrend-dt}. Inspired by this result, the proposal of \cite{ks,behrend_bryan_szendroi} was  
to study the {\it motivic Milnor fiber} of the CS functional as a motivic refinement of the DT invariant. Such a refinement 
had been expected in string theory \cite{RTV,dimofte-gukov}.

\smallskip

On the other hand, in~\cite{szendroi-ncdt}, it was proposed to study counting invariants for the non-commutative crepant resolution (NCCR) of the conifold, which are called non-commutative Donaldson--Thomas (ncDT) invariants.
It was also conjectured there that ncDT and DT invariants are related by wall crossing.
The paper~\cite{nagao-nakajima} realized this, by
\begin{itemize}
\item describing the chamber structure on the space of stability parameters for the NCCR,
\item finding chambers which correspond to geometric DT and stable pair (PT), as well as ncDT invariants, and
\item computing the generating function of DT type invariants for each chamber.
\end{itemize}
For the conifold, the dimension of the fiber of the crepant resolution is less than $2$ (we say that the resolution is small).
This condition plays an important role in many places of the paper. 
Affine toric Calabi--Yau $3$-folds which have small crepant resolutions are classified as follows:
\begin{enumerate}
\item $\mathcal{X}=\mathcal{X}_{N_0,N_1}:=
\{XY-Z^{N_0}W^{N_1}\}$ for $N_0>0$ and $N_1\geq 0$, or 
\item $\mathcal{X}=\mathcal{X}_{(\Z/2\Z)^2}:=\C^3/(\Z/2\Z)^2$ where $(\Z/2\Z)^2$ acts on $\C^3$ with weights $(1,0)$, $(0,1)$ and $(1,1)$.
\end{enumerate}
\begin{figure}[htbp]
  \centering
  \input{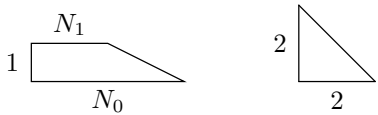}
  \caption{Polygons for $\mathcal{X}_{N_0,N_1}$ and $\mathcal{X}_{(\Z/2\Z)^2}$}
\label{fig1}
\end{figure}

In \cite{3tcy}, counting invariants for non-commutative and commutative crepant resolutions of $\{XY-Z^{N_0}W^{N_1}\}$ were studied. First, we provided descriptions of NCCRs of $\{XY-Z^{N_0}W^{N_1}\}$ in terms of a quiver with potential. Given $N_0$ and $N_1$, the quivers with potential are not unique. However it was also shown that any such quivers with potential are related by a sequence of mutations. Finally, generalizations of the results in \cite{nagao-nakajima} are given.

\smallskip

In \cite{mmns}, we provided motivic refinements of formulae in \cite{nagao-nakajima}.
For the proof, we needed one explicit evaluation of the ``universal'' series (\cite[\S 2]{mmns}) and  
a wall-crossing argument (\cite[\S 3]{mmns}).

\smallskip

In this paper, we will show similar formulae for $\{XY-Z^{N_0}W^{N_1}\}$, that is, motivic refinements of the formulae in \cite{3tcy}. 
The wall-crossing argument works without modifications (\S \ref{sec_framing}, \ref{sec_DTPT}), while the evaluation part is more involved (Theorem \ref{thm_A}). 
Our strategy is as follows:
\begin{itemize}
\item
First, in \S \ref{sec_univ1}, we evaluate the universal series for a specific NCCR using a generalization of the calculation \cite[\S 2.2]{mmns}. 
\item 
Then, in \S \ref{sec_univ2}, we evaluate the universal series for a general NCCR. In \cite{motivic_WC}, the second author provided a formula which describes how the universal series changes under mutation (\S \ref{sec_framing}, \ref{sec_DTPT}). 
Although we assume that the quiver has no loops and $2$-cycles in \cite{motivic_WC}, we can apply a parallel argument in our setting as well.
\end{itemize}
Since any two NCCRs are related by a sequence of mutations, the evaluation is done.

\section*{Main result}
Let $\Gamma$ be the quadrilateral (or the triangle in case $N_1=0$) as in Figure \ref{fig1} and $\sigma$ be a partition $\Gamma$, that is, a division of $\Gamma$ into $N$-tuples of triangles with area $1/2$.
We will associate $\sigma$ with a quiver with superpotential $(Q_{\sigma},\omega_{\sigma})$.
The set of vertices of the quiver $Q_{\sigma}$ is $\hI:=\mathbb{Z}/N\mathbb{Z}$, which is identified with $\{0,\ldots,N-1\}$. 
A vertex has a loop if and only if it is in the subset $\hI_r\subset \hI$ (see \eqref{eq_Ir} for the definition).
It is shown in \cite[\S 1]{3tcy} that the Jacobian algebra $J_\sigma:=J{(Q_{\sigma},\omega_{\sigma})}$ is an NCCR of 
\[\mathcal{X}:=\mathrm{Spec}\left(\C[X,Y,Z,W] /(XY-Z^{N_0}W^{N_1})\right).\]

Let $\Delta$ be the set of roots of type $\hat{A}_{N}$ and $\Delta_{\sigma,+}$ (resp. $\Delta^{\re}_{\sigma,+}$, $\Delta^{\im}_{\sigma,+}$) denote the set of positive (resp. positive real, positive imaginary) roots. 
\footnote{From the view point of the root system, a choice of a partition $\sigma$ corresponds to a choice of a set of simple roots.}

\smallskip

For $\al\in\cN^{\hI}$, let $\gM(J_\sigma,\al)$ be the moduli stack of $J_\sigma$-modules ${V}$ with $\udim{V}=\al$.
We define the generating series of the motivic DT invariants of $(Q_\sigma,W_\sigma)$ by
\[
A_U^\sigma(y)=
A_U^\sigma(y_0,\ldots,y_{N-1})
:=\sum_{\al\in\cN^{Q_0}}[\gM(J_\sigma,\al)]_\vir \cdot y^\al
\in \MC[[y_0,\ldots,y_{N-1}]].\footnote{For the wall-crossing of motivic DT theory, a twisted product on $y_\alpha$'s twisted by the Euler form plays a crucial role. In this case, the twisted product coincides with the usual commutative product since the Euler form is trivial.}
\]
Here $y^\al:=\prod (y_i)^{\alpha_i}$ and $[\bullet]_\vir$ denotes the {\it virtual motive} (see Section \ref{subsec_11}), an element of a suitable ring of motives $\MC$. 
The subscript referring to the fact that we think of this series as the universal series.

To each root $\alpha\in \Delta_{\sigma,+}$, we associate an infinite product as follows:
\begin{itemize}
\item
for a real root $\al\in \Delta^{\re}_{\sigma,+}$ such that $\sum_{k\notin \hat{I}_r}\alpha_k$ is odd, put
\begin{align*}
A^\al(y) &:= 
\mathrm{Exp}\left(\frac{-\mathbb{L}^{-1/2}}{1-\mathbb{L}^{-1}}y^\alpha\right)\\
& =
\prod_{j\geq 0}\left(1-\mathbb{L}^{-j-1/2}y^\alpha\right) 
\end{align*}
\item
for a real root $\al\in \Delta^{\re}_{\sigma,+}$ such that $\sum_{k\notin \hat{I}_r}\alpha_k$ is even, put
\begin{align*}
A^\al(y) &:= 
\mathrm{Exp}\left(\frac{1}{1-\mathbb{L}^{-1}}y^\alpha\right)\\
& =
\prod_{j\geq 0}
\left(1-\mathbb{L}^{-j}y^\alpha\right)^{-1} 
\end{align*}
\item
for an imaginary root $\al\in\Delta^{\im}_{\sigma,+}$, put
\begin{align*}
A^\al(y) &:= 
\mathrm{Exp}\left(\frac{N-1+\mathbb{L}}{1-\mathbb{L}^{-1}}y^\alpha\right)\\
& =
\prod_{j\geq 0}
\left(1-\mathbb{L}^{-j}y^\alpha\right)^{1-N} 
\cdot 
\left(1-\mathbb{L}^{-j+1}y^\alpha\right)^{-1}. 
\end{align*}
\end{itemize}
The main result of this paper is the following formula:
\begin{thm}\label{thm_A}
\[
A^\sigma_U(y)=
\prod_{\al\in\De_{\sigma,+}}
A^{\al}(y).
\]
\end{thm}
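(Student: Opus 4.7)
I follow the two-step strategy outlined in the introduction: establish the formula for one well-chosen partition by a direct motivic computation, then propagate it to all other partitions via the mutation/wall-crossing machinery.

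\textbf{Step 1 (direct evaluation for a distinguished $\sigma$).} I would first pick a partition $\sigma_0$ of $\Gamma$ whose associated quiver $(Q_{\sigma_0},\omega_{\sigma_0})$ is as tractable as possible — concretely, the partition in which the triangles of the two orientations are grouped into two consecutive blocks, so that the quiver has a ``cyclic zig-zag'' shape and all loops (at vertices of $\hat I_r$) are clustered. On $\gM(J_{\sigma_0},\alpha)$ the superpotential $\omega_{\sigma_0}$ admits an explicit factorization: its trace splits as a sum over the arrows in such a way that a motivic dimensional reduction of Behrend--Bryan--Szendr\H{o}i type applies, cutting the problem down to representations of a subquiver whose Jacobian is a polynomial algebra. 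The fiber of the reduction map carries an affine structure whose rank can be read off from the dimension vector $\alpha$, producing the exact powers of $\mathbb{L}$ that appear in the three types of factors $A^\alpha(y)$. Arranging these powers by which root class $\alpha$ belongs to and summing over $\alpha\in\cN^{\hat I}$ gives the equality
\[
A_U^{\sigma_0}(y) \;=\; \prod_{\alpha\in\De_{\sigma_0,+}} A^\alpha(y).
\]
This is precisely the generalization of \cite[\S 2.2]{mmns} referred to in the introduction; the new features are the need to track the contribution of loops at $\hat I_r$-vertices (giving the half-integer shifts in the ``odd'' real-root factors) and the refined counting of the imaginary root contributions along the central direction $\delta$.

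\textbf{Step 2 (propagation by mutation).} By \cite{3tcy}, any two NCCRs of $\mathcal X$ are connected by a sequence of mutations at loopless vertices, i.e.\ at $k\in\hat I\setminus \hat I_r$. I would apply the motivic wall-crossing formula of \cite{motivic_WC} at each such vertex. Mutation at $k$ acts on the root lattice $\mathbb{Z}^{\hat I}$ by the simple reflection $s_k$ of the affine Weyl group of $\hat A_{N}$; under the induced change of variables $y\mapsto s_k^*y$, the wall-crossing identity says that $A_U^{\mu_k\sigma}(s_k^*y)$ agrees with $A_U^\sigma(y)$ up to the explicit factor coming from the root $\alpha_k$. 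Since $s_k$ fixes every positive imaginary root, fixes every positive real root orthogonal to $\alpha_k$, sends $\alpha_k$ to $-\alpha_k$, and permutes the remaining positive real roots among themselves, the right-hand product $\prod_{\alpha\in\De_{\sigma,+}}A^\alpha(y)$ transforms under mutation in exactly the same way as the left-hand side. Thus if the formula holds for $\sigma$ it holds for $\mu_k\sigma$, and iterating we reach every partition starting from $\sigma_0$.

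\textbf{Step 3 (lifting the hypotheses of \cite{motivic_WC}).} The wall-crossing formula in \cite{motivic_WC} is proved under the assumption that the quiver has no loops and no $2$-cycles, whereas our $Q_\sigma$ has loops at each vertex of $\hat I_r$. As noted in \S\ref{sec_framing}--\S\ref{sec_DTPT}, however, mutations are only performed at loopless vertices, and the loops at $\hat I_r$-vertices play no role in the local analysis near the mutated vertex; the same stratification of the framed moduli stack and the same integration identity go through verbatim. I would verify this compatibility explicitly once, and then invoke it in Step 2 without further comment.

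\textbf{Main obstacle.} The delicate part is Step 1: the bookkeeping of loops, zig-zag arrows, and the central imaginary class must be organized so that the dimensional reduction produces precisely the prescribed factors for real roots of each parity of $\sum_{k\notin\hat I_r}\alpha_k$ as well as the $(N-1+\mathbb{L})/(1-\mathbb{L}^{-1})$ plethystic exponent for the imaginary roots. Once this direct computation is in place, Step 2 is a clean formal consequence of the root-system compatibility of mutations.
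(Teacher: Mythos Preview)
Your two-step strategy matches the paper's exactly, and Step~2 is essentially right: the paper proves the mutation formula $A_U^{\sigma'}(y)=\bigl(A_U^{\sigma}(y)/\mathbb{E}(y_k)\bigr)\cdot\mathbb{E}(y_k^{-1})$ (Theorem~\ref{thm_WC}) and combines it with the fact that simple reflection preserves the parity of $\sum_{k\notin\hat I_r}\alpha_k$ (\S\ref{subsec_root}) to propagate the identity from a fixed $\sigma_0$ to all $\sigma$. One correction to your Step~3: the paper does not simply assert that the argument of \cite{motivic_WC} ``goes through verbatim'' at loopless vertices. Rather, it uses the explicit description of mutation for these particular quivers from \cite{3tcy} (\S\ref{subsec_mutation}) together with a concrete compatibility between $\Phi_k$ and cuts (Proposition~\ref{prop_cut_mutation}) to justify the key identity $A_k^\sigma=A^{\sigma',k}$.

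The genuine gap is Step~1. Your description --- dimensional reduction to ``a subquiver whose Jacobian is a polynomial algebra'' with an affine fiber whose rank is ``read off from $\alpha$'' --- is not how the computation works, and is not correct as stated: after applying the cut (Proposition~\ref{prop_reduction}) one lands on the truncated Jacobian $J_{\sigma,C}$, which still has nontrivial relations and is far from polynomial. The paper's actual method is substantially more involved: one forms the cyclic endomorphism $H=\sum_i h^+_{i+1/2}$, factorizes $A_U^\sigma=I^\sigma\cdot N^\sigma$ according to the invertible/nilpotent splitting of $H$ (Lemma~\ref{factor}), identifies $I^\sigma$ with a commuting-variety generating series (Proposition~\ref{invseries}), and then stratifies the nilpotent locus by the refined Jordan type $\{\pi^{[a,b]}\}$ of $H$ (Proposition~\ref{decomp}). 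Each stratum is a vector bundle over a torsor (Lemmas~\ref{Risvbun}, \ref{ReqTmB}), and the main effort is the delicate linear-algebra computation of the rank difference $T(\{\pi^{[a,b]}\})-B(\{\pi^{[a,b]}\})$ carried out in the Appendix (Proposition~\ref{difference}). None of this structure is visible in your outline; the ``bookkeeping'' you allude to is the entire content of \S\ref{sec_univ1} and the Appendix, and the organizing ideas (cycle $H$, invertible/nilpotent split, Jordan stratification) are missing.
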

This is proved in \S \ref{sec_univ1} and \S \ref{subsec_univ2}.

\section*{Corollaries}

Let $\Jtilde_\sigma=J(\Qtilde_\sigma,W_\sigma)$ be 
the framed algebra given by adding the new vertex $\infty$ and the new arrow from $\infty$ to $0$ to the quiver of $J_\sigma$.
In \cite{nagao-nakajima}, the authors introduce a notion of $\ze$-(semi)stability of 
$\Jtilde$-modules $\wtl{V}$ with $\dim \wtl{V}_\infty\leq 1$ for a stability 
parameter $\ze\in \mathbb{R}^{\hI}$.

For $\al\in\cN^{\hI}$, let ${\MM}_\ze(\Jtilde,\al)$ be the moduli space of $\ze$-stable $\Jtilde$-modules $\wtl{V}$ with $\udim\wtl{V}=(\al,1)$.
We want to compute the motivic generating series
\[
Z_\ze(y)=
Z_\ze(y_0,\ldots,y_{N-1}):=
\sum_{\al\in\cN^{\hI}}
\Bigl[{\MM}_\ze\bigl(\Jtilde,\al\bigr)\Bigr]_\vir \cdot y^\alpha\in \MC[[y_0,\ldots,y_{N-1}]].
\]

To each root $\al\in \Delta_{\sigma,+}$, we put
\[
Z_\alpha(y_0,\ldots,y_{N-1}):=\frac{A^\al(-\mathbb{L}^{1/2}y_0,y_1,\ldots,y_{N-1})}{A^\al(-\mathbb{L}^{-1/2}y_0,y_1,\ldots,y_{N-1})}.
\]
They are given as follows:
\begin{itemize}
\item
for a real root $\al\in \Delta^{\re}_{\sigma,+}$ such that $\sum_{k\notin \hat{I}_r}\alpha_k$ is odd, we have
\[
Z_\al(-y_0,\ldots,y_{N-1})
=
\prod_{i=0}^{\alpha_0-1} \left(1-\mathbb{L}^{-\frac{\alpha_0}{2}+\frac{1}{2}+i}y^\alpha\right),
\]
\item
for a real root $\al\in \Delta^{\re}_{\sigma,+}$ such that $\sum_{k\notin \hat{I}_r}\alpha_k$ is even, we have
\[
Z_\al(-y_0,\ldots,y_{N-1})
=
\prod_{i=0}^{\alpha_0-1} 
\left(1-\mathbb{L}^{-\frac{\alpha_0}{2}+1+i}y^\alpha\right)^{-1},
\]
\item
for an imaginary root $\al\in\Delta^{\im}_{\sigma,+}$, we have
\[
Z_\al(-y_0,\ldots,y_{N-1})=\prod_{i=0}^{\alpha_0-1} \left(1-\mathbb{L}^{-\frac{\alpha_0}{2}+1+i}y^\alpha\right)^{1-N} \cdot \left(1-\mathbb{L}^{-\frac{\alpha_0}{2}+2 +j}y^\alpha\right)^{-1}
\]

\end{itemize}
Applying the same argument as \cite[\S 3]{mmns}, we get the following formula (\S \ref{sec_framing}):
\begin{cor}\label{cor_02}
For $\ze\in\R^{\tI}$ not orthogonal to any root, we have
\[
Z_\ze(y)=
\prod_{\substack{\al\in\De_{\sigma,+}\\ \ze\cdot\al<0}}
Z_{\al}(y_0,\ldots,y_{N-1}).
\]
\end{cor}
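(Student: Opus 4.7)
The plan is to follow the wall-crossing argument of \cite[\S 3]{mmns} essentially verbatim, with Theorem \ref{thm_A} as the input. First, I would set up the wall-and-chamber decomposition of the stability space $\R^{\hI}$ by the hyperplanes $\al^\perp$ orthogonal to the positive roots $\al\in\De_{\sigma,+}$. In any chamber, the set $\{\al\in\De_{\sigma,+} : \ze\cdot\al<0\}$ is constant, so $Z_\ze(y)$ depends only on the chamber, and it suffices to show that the series changes precisely by a factor $Z_\al(y)^{\pm 1}$ whenever $\ze$ crosses the wall $\al^\perp$. A ``degenerate'' chamber in which the relevant set is empty (so $\MM_\ze(\Jtilde_\sigma,\al)$ is empty for every nonzero $\al$) provides the base case $Z_\ze(y)=1$.

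Second, I would derive the wall-crossing factor from a motivic Hall algebra identity for $\Jtilde_\sigma$-modules. The stack of $\ze$-stable framed modules admits a Harder--Narasimhan type stratification relating it to an ordered product of substacks indexed by underlying $J_\sigma$-modules of various dimension vectors. Applying the integration map into $\MC[[y_0,\ldots,y_{N-1}]]$, where the twisted product coincides with the usual product by the footnote on the trivial Euler form, converts this stratification into an identity between $Z_\ze(y)$ and a product involving $A_U^\sigma(y)$. The single framing arrow from $\infty$ to $0$ has the effect of substituting $y_0\mapsto -\LL^{\pm 1/2}y_0$ in each factor $A^\al(y)$, and the quotient of the ``before'' and ``after'' substitutions is precisely $Z_\al(y)$ as defined in the statement.

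Combining these two steps with the factorization $A_U^\sigma(y) = \prod_{\al\in\De_{\sigma,+}}A^\al(y)$ supplied by Theorem \ref{thm_A}, each positive root contributes independently, and after the wall-crossing bookkeeping only those $\al$ with $\ze\cdot\al<0$ survive. This yields the claimed product formula, with the three cases (real odd, real even, imaginary) of $Z_\al$ inherited directly from the corresponding cases of $A^\al$.

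The main obstacle is verifying that the Hall algebra argument of \cite[\S 3]{mmns}, written there for the conifold quiver (no loops), extends cleanly to our quivers $Q_\sigma$, which carry loops at the vertices in $\hI_r$. The required generalisation of the motivic wall-crossing formula to this loop-bearing setting is carried out in \cite{motivic_WC} and already underlies the proof of Theorem \ref{thm_A}; once this is granted, the chamber-by-chamber argument of \cite[\S 3]{mmns} transfers without further modification, completing the proof.
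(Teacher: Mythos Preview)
Your proposal is essentially correct and contains the right ingredients, but the paper's route is more direct than the chamber-by-chamber wall-crossing you sketch in your first paragraph. The paper does not argue by induction on walls starting from a trivial chamber. Instead it proves, for \emph{any} generic $\ze$ at once, the identity
\[
Z_\ze(y)=\frac{A_\ze^-(-\cL^{1/2}y_0,y_1,\ldots,y_{N-1})}{A_\ze^-(-\cL^{-1/2}y_0,y_1,\ldots,y_{N-1})}
\]
(Theorem~\ref{thm_framed_vs_nonframed}, quoted from \cite[Proposition~4.6]{mmns}), and separately factorises $A_\ze^-$ as $\prod_{\ze\cdot\al<0}A^\al(y)$ (Lemma~\ref{lem_52}, quoted from \cite[Lemma~2.6]{mmns}) using the Harder--Narasimhan stratification together with Theorem~\ref{thm_A}. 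Substituting one into the other gives the corollary immediately, since $Z_\al$ is defined to be exactly the ratio $A^\al(-\cL^{1/2}y_0,\ldots)/A^\al(-\cL^{-1/2}y_0,\ldots)$.

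Your second paragraph is in fact describing precisely this HN argument and the framing shift $y_0\mapsto -\cL^{\pm 1/2}y_0$, so you already have the paper's proof in hand; the chamber-by-chamber induction of your first paragraph is superfluous. It is also mildly dangerous as stated: the real-root walls accumulate on the imaginary-root wall $\delta^\perp$, so one cannot pass from the trivial chamber $\{\zeta_i>0\ \forall i\}$ to a chamber on the other side of $\delta^\perp$ (e.g.\ the DT or NCDT chamber) by crossing only finitely many walls. The direct HN/integration-map argument sidesteps this issue entirely. Your remark about loops is well taken but, as you note, the necessary extension is already implicit in the proof of Theorem~\ref{thm_A} via \cite{motivic_WC}, and the two statements being invoked (Lemma~\ref{lem_52} and Theorem~\ref{thm_framed_vs_nonframed}) carry over verbatim.
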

By~\cite{behrend-dt, behrend_bryan_szendroi}, the specialization $Z_\ze(y)|_{\LL^\oh\to 1}$ is the DT-type series at the generic 
stability parameter $\ze$, computed in ~\cite{3tcy}.

\smallskip

Let $\mathcal{Y}_\sigma\to \mathcal{X}$ be the crepant resolution corresponding to $\sigma$.  
The noncommutative crepant resolution $J_{\sigma}$ is derived equivalent to $\mathcal{Y}_\sigma$.
In \cite[\S 3]{nagao-nakajima}, we find a stability parameter $\zeta_{\mr{DT}}$ (resp. $\zeta_{\mr{PT}}$) such that the moduli space coincides with the Hilbert scheme (resp. the stable pair moduli space) for $\mathcal{Y}_\sigma$.

Let $Z^\sigma_{\rm DT}(s,T_1,\ldots,T_{N-1})$ (resp. $Z^\sigma_{\rm DT}(s,T_1,\ldots,T_{N-1})$) be the generating function of DT (resp. PT) invariants of $\mathcal{Y}_\sigma$. 
Here $s$ is the variable for the homology class of a point and 
$T_i$ is the variable for the homology class of the $i$-th component $C_i$ of the exceptional curve.
The variable change induced by the derived equivalence is given as follows:
\[
s:=y_0\cdot y_1\cdot\cdots\cdot y_{N-1},\quad 
T_i=y_i.
\]

For $1\leq a\leq b\leq N-1$, we put
\[
C_{[a,b]}:=[C_a]+\cdots +[C_b] \in H_2(\mathcal{Y}_\sigma,\mathbb{Z}),
\]
where $C_i$ is a component of the exceptional curve and let 
\[
T_{[a,b]}=T_a\cdot\cdots\cdot T_b
\]
be the corresponding monomial.
Let $c(a,b)$ denote the number of $(-1,-1)$-curves in $\{C_i\mid a\leq i\leq b\}$.
We define infinite products as follows:
\begin{itemize}
\item 
If $c(a,b)$ is odd, we put
\[
Z_{[a,b]}=Z_{[a,b]}(s,T_{[a,b]}):=
\prod_{n=1}^\infty\left(\prod_{i=0}^{n-1} \left(1-\mathbb{L}^{-\frac{n}{2}+\frac{1}{2}+i}\cdot (-s)^n\cdot T_{[a,b]}\right)\right).
\]
\item 
If $c(a,b)$ is even, we put
\[
Z_{[a,b]}=Z_{[a,b]}(s,T_{[a,b]}):=
\prod_{n=1}^\infty\left(\prod_{i=0}^{n-1} \left(1-\mathbb{L}^{-\frac{n}{2}+1+i}\cdot (-s)^n\cdot T_{[a,b]}\right)^{-1}\right).
\]
\item For imaginary roots, we put
\[ Z_\mathrm{im} = Z_\mathrm{im}(s) := \prod_{n=1}^\infty \left( \prod_{i=0}^{n-1}\left(1-\mathbb{L}^{-\frac{n}{2}+1+i}(-s)^n\right)^{1-N}\left(1-\mathbb{L}^{-\frac{n}{2}+2+i}(-s)^n\right)^{-1} \right).\]
\end{itemize}
\begin{cor}\label{announce_thm_main_cor}
\begin{itemize}
\item[(1)]
The refined DT and PT series of $\mathcal{Y}_\sigma$ are given by the formulae :
\[
Z_{\rm DT}(s,T_1,\ldots,T_{N-1}) = 
Z_\mathrm{im}(s) \cdot \prod_{1\leq a\leq b\leq N-1}Z_{[a,b]}(s,T_{[a,b]})
\]
and
\[
Z_{\rm PT}(s,T_1,\ldots,T_{N-1}) =
\prod_{1\leq a\leq b\leq N-1}Z_{[a,b]}(s,T_{[a,b]})
\]
\item[(2)]
The generating function of virtual motives of the Hilbert scheme of points on $\mathcal{Y}_\sigma$ is given by the formula :
\[
Z_\textup{$0$-dim}(s):=
\sum_{n=0}^\infty \left[(\mathcal{Y}_\sigma)^{[n]}\right]_\mr{vir}\cdot s^n
=Z_\mathrm{im}.
\]
\item[(3)]The refined version of DT-PT correspondence for $\mathcal{Y}_\sigma$ holds :
\[
Z_{\rm DT}(s,T_1,\ldots,T_{N-1})=
Z_\textup{$0$-dim}(s)\times
Z_{\rm PT}(s,T_1,\ldots,T_{N-1}).
\]
\end{itemize}
\end{cor}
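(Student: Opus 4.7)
The plan is to derive part (1) by specializing Corollary \ref{cor_02} at the two distinguished stability parameters $\zeta_{\rm DT}$ and $\zeta_{\rm PT}$, then to extract part (2) as the case where only the imaginary-root factors survive, and finally to read off (3) as the immediate algebraic consequence of combining (1) and (2).

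First I would invoke the explicit description of the chambers containing $\zeta_{\rm DT}$ and $\zeta_{\rm PT}$ from \cite{nagao-nakajima} and \cite{3tcy}: $\zeta_{\rm PT}$ is characterized by $\zeta_{\rm PT}\cdot\alpha<0$ iff $\alpha\in\Delta^{\re}_{\sigma,+}$ has $\alpha_0>0$, while $\zeta_{\rm DT}$ additionally satisfies $\zeta_{\rm DT}\cdot(n\delta)<0$ for every positive imaginary root. Combinatorially, the positive real roots with $\alpha_0>0$ are in bijection with triples $(a,b,n)$ where $1\le a\le b\le N-1$ and $n\ge 1$, via $\alpha=\alpha_{[a,b]}+n\delta$; the positive imaginary roots are the $n\delta$ for $n\ge 1$. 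A direct check shows that under this parametrization the parity of $\sum_{k\notin\hat{I}_r}\alpha_k$ matches the parity of $c(a,b)$, because the simple roots outside $\hat{I}_r$ are precisely those corresponding to the $(-1,-1)$-curves among the $C_i$.

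Next I would apply the change of variables $s=y_0\cdots y_{N-1}$, $T_i=y_i$ induced by the derived equivalence. An imaginary root contributes $y^{n\delta}=s^n$, so the product of imaginary-root factors in $Z_{\zeta_{\rm DT}}$ becomes exactly $Z_{\rm im}(s)$. For a real root $\alpha=\alpha_{[a,b]}+n\delta$ one computes $\alpha_0=n$ and $y^\alpha=s^n\cdot T_{[a,b]}$; substituting these into the three-case formula for $Z_\alpha(-y_0,y_1,\ldots,y_{N-1})$ from Corollary \ref{cor_02} reproduces, factor by factor and indexed by $n\ge 1$, the product defining $Z_{[a,b]}(s,T_{[a,b]})$, with the odd/even dichotomy governed by $c(a,b)$ as noted above. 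Collecting over all $(a,b,n)$ yields the DT formula; the PT formula is obtained identically, the only change being that the imaginary factors are absent because $\zeta_{\rm PT}\cdot(n\delta)>0$.

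For part (2) I would specialize $T_1=\cdots=T_{N-1}=0$ in $Z_{\rm DT}$. Each $Z_{[a,b]}(s,T_{[a,b]})$ evaluates to $1$ at $T_{[a,b]}=0$, leaving $Z_{\rm im}(s)$; on the moduli side this specialization picks out $\zeta_{\rm DT}$-stable framed modules whose underlying dimension vector is a multiple of $\delta$, which under the derived equivalence are the ideal sheaves of $0$-dimensional subschemes of $\mathcal{Y}_\sigma$, giving the Hilbert scheme generating series. Part (3) is then immediate: comparing the two formulae in (1) and using the identification $Z_{\rm im}=Z_{\text{$0$-dim}}$ from (2) gives the refined DT--PT correspondence.

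The main obstacle is the bookkeeping in the first step: one must carefully check that the combinatorial identification of positive roots with triples $(a,b,n)$ respects both the chamber condition $\zeta\cdot\alpha<0$ and the parity condition on $\sum_{k\notin\hat{I}_r}\alpha_k$, and that the shifts $-\alpha_0/2+\tfrac12+i$, $-\alpha_0/2+1+i$, $-\alpha_0/2+2+i$ in the exponents of $\mathbb{L}$ in Corollary \ref{cor_02} line up exactly with those appearing in the definitions of $Z_{[a,b]}$ and $Z_{\rm im}$ once $\alpha_0$ is set equal to $n$.
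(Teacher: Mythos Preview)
Your proposal is correct and follows essentially the same route as the paper. In \S\ref{sec_DTPT} the paper identifies the stability parameters $\zeta_{\rm DT}$ and $\zeta_{\rm PT}$ via Theorem \ref{thm_desrc}, determines the corresponding sets $\{\alpha\in\Delta_{\sigma,+}\mid\zeta\cdot\alpha<0\}$, and applies the variable change $s=y_0\cdots y_{N-1}$, $T_i=y_i$ to Corollary \ref{cor_02}; part (2) is obtained exactly as you describe, by observing that the $T_{[a,b]}$-dependent factors do not contribute (the Remark following the statement), and (3) is immediate.
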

\begin{rk*}
The formula in (2) is a direct consequence of the formula for $Z_{\rm DT}$ in (1), since the polynomial in the $T_{[a,b]}$ variables does not contribute.
\end{rk*}

\section*{Acknowledgements}
The first author thanks his Ph.D. supervisor Jim Bryan for his mathematical support over the last years. The authors thank Sergey Mozgovoy for  valuable discussion. A.M. is partially supported by a Four Year Doctoral Fellowship (4YF), University of British Columbia. K.N.\ is supported by the Grant-in-Aid for Research Activity Start-up (No.\ 22840023) and for 
Scientific Research (S) (No.\ 22224001).

\section{Root system of type $\hat{A}_N$}
Let $N_0>0$ and $N_1\geq 0$ be integers such that $N_0\geq N_1$ and set $N=N_0+N_1$.
We set
\begin{align*}
I&=\left\{1,\ldots,N-1\right\},\\
\hat{I}&=\left\{0,1,\ldots,N-1\right\},\\
\tilde{I}&=\left\{\h,\frac{3}{2},\ldots,N-\h\right\},\\
\tilde{\Z}&=\left\{n+\h\,\Big|\, n\in\Z\right\}.
\end{align*}
For $l\in\Z$ and $j\in\tZ$, let $\underline{l}\in\hI$ and $\underline{j}\in\tI$ be the elements 
such that $l-\underline{l}\equiv j-\underline{j}\equiv 0$ modulo $N$.

Let $\mathbb{Z}^{\hat{I}}$ be the free Abelian group with basis $\{\alpha_i\mid i\in \hI\}$, where $\alpha_i$ is called a simple root.
We put 
\begin{align*}
\Delta^{\mr{fin}}_+  &:= \{ \alpha_{[a,b]}:=\alpha_a+\cdots\alpha_b\mid 1\leq a\leq b\leq N-1\}\\
\Delta_+^{\mr{re},+}  &:= 
\{ \alpha_{[a,b]}+n\cdot \delta 
\mid 
\alpha_{[a,b]}\in \Delta^{\mr{fin}}_+, 
n\in \mathbb{Z}_{\geq 0} \}\\
\Delta_+^{\mr{re},-}  &:= 
\{ -\alpha_{[a,b]}+n\cdot \delta 
\mid 
\alpha_{[a,b]}\in \Delta^{\mr{fin}}_+, 
n\in \mathbb{Z}_{> 0} \}
\end{align*}
and
\[
\Delta_+^{\mr{re}}  := \Delta_+^{\mr{re},+} \sqcup \Delta_+^{\mr{re},-},\quad
\Delta^{\mr{im}}_+  := \{ n\cdot \delta \mid  n\in \mathbb{Z}_{> 0}\}
\]
where $\delta:=\alpha_0+\cdots+\alpha_{N-1}$ is the (positive minimal) imaginary root.

For $k\in \hI$, the simple reflection at $k$ is the group homomorphism given by
\[
\begin{array}{ccc}
\mathbb{Z}^{\hI} & \to & \mathbb{Z}^{\hI}\\
\alpha_i & \mapsto & \alpha_i-C_{ik}\cdot \alpha_k
\end{array}
\]
where $C$ is the Cartan matrix of type $\hat{A}_N$.
This gives a self-bijection of $\Delta^{\mr{re},+}_+\backslash \{\alpha_k\}$.

\section{Noncommutative crepant resolutions}\label{sec_NCCR}
\subsection{Quivers with potential}
We denote by $\Gamma$ the quadrilateral (or the triangle in case $N_1=0$) with vertices $(0,0)$, $(0,1)$, $(N_0,0)$ and $(N_1,1)$.
Note that the affine toric Calabi--Yau $3$-fold corresponding to $\Gamma$ is $\mathcal{X} = \{XY-Z^{N_0}W^{N_1}\}$.

A partition $\sigma$ of $\Gamma$ is a pair of functions $\sigma_x\colon\tI\to\tZ$ and $\sigma_y\colon\tI\to\{0,1\}$ such that 
\begin{itemize}
\item $\sigma(i):=(\sigma_x(i),\sigma_y(i))$ gives a bijection between $\tI$ and the following set:
\[
\left\{\left(\h,0\right),\left(\frac{3}{2},0\right),\ldots,\left(N_0-\h,0\right),\left(\h,1\right),\left(\frac{3}{2},1\right),\ldots,\left(N_1-\h,1\right)\right\},
\]  
\item if $i<j$ and $\sigma_y(i)=\sigma_y(j)$ then $\sigma_x(i)>\sigma_x(j)$.
\end{itemize}
Giving a partition $\sigma$ of $\Gamma$ is equivalent to dividing $\Gamma$ into $N$-tuples of triangles $\{T_i\}_{i\in\tI}$ with area $1/2$ so that $T_i$ has $(\sigma_x(i)\pm 1/2, \sigma_y(i))$ as its vertices.
Let $\Gamma_\sigma$ be the corresponding diagram, $\Delta_\sigma$ be the fan and $f_\sigma\colon \mca{Y}_\sigma\to \mca{X}$ be the crepant resolution of $\mca{X}$. 
We put
\begin{equation}\label{eq_Ir}
\hI _r:=\left\{k\in \hI\,\Big|\,\sigma_y(k-\h)=\sigma_y(k+\h)\right\}.
\end{equation}

\begin{ex}\label{example}
Let us consider as an example the case $N_0=4$, $N_1=2$ and 
\[
(\sigma(i))_{i\in \tI}=\left(\left(\frac{7}{2},0\right),\left(\frac{3}{2},1\right),\left(\frac{5}{2},0\right),\left(\frac{3}{2},0\right),\left(\frac{1}{2},1\right), \left(\frac{1}{2},0\right)\right).
\]
We show the corresponding diagram $\Gamma_\sigma$ in Figure \ref{fig:Q}.
\begin{figure}[htbp]
  \centering
  \input{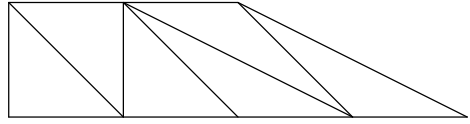}
  \caption{$\Gamma_\sigma$}
  \label{fig:Q}
\end{figure}
\end{ex}
Let $S$ be the union of an infinite number of rhombi with edge length $1$ as in Figure \ref{fig:S} which is located so that the centers of the rhombi are on a line parallel to the $x$-axis in $\R^2$  and $H$ be the union of infinite number of hexagons with edge length $1$ as in Figure \ref{fig:H} which is located so that the centers of the hexagons are in a line parallel to the $x$-axis in $\R^2$.
\begin{figure}[htbp]
  \centering
  \input{4gon-2.tpc}
  \caption{S}
  \label{fig:S}
\end{figure}
\begin{figure}[htbp]
  \centering
  \input{6gon.tpc}
  \caption{H}
  \label{fig:H}
\end{figure}
We make the sequence $\tau=\tau_\sigma\colon \Z\to \{S,H\}$ which maps $l$ to $S$ (resp. $H$) if $l$ modulo $N$ is not in $\hI _r$ (resp. is in $\hI _r$) and cover the whole plane $\R^2$ by arranging $S$'s and $H$'s according to this sequence (see Figure \ref{fig:P}). 
We regard this as a graph on the $2$-dimensional torus $\R^2/\Lambda$, where $\Lambda$ is the lattice generated by $(\sqrt{3},0)$ and $(N_0-N_1,(N_0-N_1)\sqrt{3}+N_1)$.
\begin{figure}[htbp]
  \centering
  \input{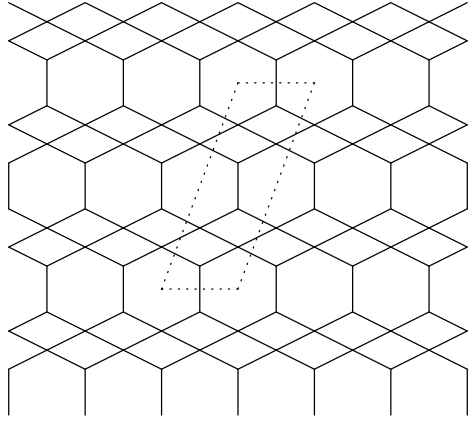}
  \caption{$P_\sigma$ in case Example \ref{example}}
  \label{fig:P}
\end{figure}
We can color the vertices of this graph black or white so that each edge connects a black vertex and a white one. 
Let $P_{\sigma}$ denote this bipartite graph on the torus. 
For each edge $h^\vee$ in $P_{\sigma}$, we make its dual edge $h$ directed so that we see the black end of $h^\vee$ on our right hand side when we cross $h^\vee$ along $h$ in the given direction. 
Let $Q_\sigma$ denote the resulting quiver.
The set of vertices of the quiver $Q_{\sigma}$ is $\hI$, which is identified with $\Z/N\Z$. 
The set of edges of the quiver $Q_{\sigma}$ is given by
\[
H:=\left(\coprod_{i\in\hind}h^+_i\right)\sqcup\left(\coprod_{i\in\hind}h^-_i\right)\sqcup\left(\coprod_{k\in \hI _r}r_k\right).
\]
Here $h^+_i$ (resp. $h^-_i$) is an edge from $i-\h$ to $i+\h$ (resp. from $i+\h$ to $i-\h$), $r_k$ is an edge from $k$ to itself.

For each vertex $q$ of $P_{\sigma}$, let $\omega_q$ be the potential\footnote{A potential of a quiver $Q$ is an element in $\C Q/[\C Q,\C Q]$, i.e. a linear combination of equivalence classes of cyclic paths in $Q$ where two paths are equivalent if they coincide after a cyclic rotation.} which is the composition of all arrows in $Q_{\sigma}$ corresponding to edges in $P_{\sigma}$ with $q$ as their ends.
We define 
\[
\omega_\sigma:=\sum_{\text{$q$ : black}}\omega_q-\sum_{\text{$q$ : white}}\omega_q.
\]
The relations of the Jacobian algebra are as follows:
\begin{itemize}
\item $h^+_i\circ r_{i-\h}=r_{i+\h}\circ h^+_i$ and $r_{i-\h}\circ h^-_i=h^-_i\circ r_{i+\h}$ for $i\in \hind$ such that $i-\h$, $i+\h\in \hI _r$.
\item $h^+_i\circ r_{i-\h}=h^-_{i+1}\circ h^+_{i+1}\circ h^+_i$ and $r_{i-\h}\circ h^-_i=h^-_i\circ h^-_{i+1}\circ h^+_{i+1}$ for $i\in \hind$ such that $i-\h\in \hI _r$, $i+\h\notin \hI _r$.
\item $h^+_i\circ h^+_{i-1}\circ h^-_{i-1}=r_{i+\h}\circ h^+_i$ and $h^+_{i-1}\circ h^-_{i-1}\circ h^-_i=h^-_i\circ r_{i+\h}$ for $i\in \hind$ such that $i-\h\notin \hI _r$, $i+\h\in \hI _r$.
\item $h^+_i\circ h^+_{i-1}\circ h^-_{i-1}=h^-_{i+1}\circ h^+_{i+1}\circ h^+_i$ and $h^+_{i-1}\circ h^-_{i-1}\circ h^-_i=h^-_i\circ h^-_{i+1}\circ h^+_{i+1}$ for $i\in \hind$ such that $i-\h$, $i+\h\notin \hI _r$.
\item $h^+_{i-\h}\circ h^-_{i-\h}=h^-_{i+\h}\circ h^+_{i+\h}$ for $k\in \hI _r$.
\end{itemize}

\subsection{NCCR and derived equivalence}
Let $\pi\colon \mathcal{Y}_\sigma\to \mathcal{X}$ be the crepant resolution corresponding to $\sigma$.
\begin{thm}\cite[Theorem 1.15 and Theorem 1.20]{3tcy}
\[
D^b(\mathrm{mod}J_\sigma)
\simeq
D^b(\mathrm{Coh}\mathcal{Y}_\sigma)
\]
\end{thm}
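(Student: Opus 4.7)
The plan is to realize $J_\sigma$ as the endomorphism algebra of a tilting bundle on $\mathcal{Y}_\sigma$ and then appeal to the standard tilting equivalence. This fits into Van den Bergh's framework for noncommutative crepant resolutions of $3$-dimensional Gorenstein singularities: $\mathcal{Y}_\sigma$ is a smooth toric Calabi--Yau threefold obtained as a small resolution of $\mathcal{X}$, and $J_\sigma$ is a Jacobian algebra of a dimer model encoding the same toric data $\Gamma_\sigma$.

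First, to each vertex $i\in\hat I$ of $Q_\sigma$ I would attach a line bundle $L_i$ on $\mathcal{Y}_\sigma$: the bipartite graph $P_\sigma$ is precisely the dimer model whose faces (equivalently, vertices of $Q_\sigma$) index a natural collection of line bundles on the toric resolution $\mathcal{Y}_\sigma$ coming from the divisors of the fan $\Delta_\sigma$. Setting $T_\sigma := \bigoplus_{i\in\hat I} L_i$, I would verify that $T_\sigma$ is a tilting bundle, that is, $\mathrm{Ext}^{>0}(T_\sigma,T_\sigma)=0$ and $T_\sigma$ classically generates $D^b(\mathrm{Coh}\,\mathcal{Y}_\sigma)$. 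Both statements reduce via the toric affine cover to combinatorial facts about the subdivided polygon $\Gamma_\sigma$; generation in particular uses that every torus-invariant divisor is obtained as an iterated cone of the $L_j\otimes L_i^{-1}$, which is where the smallness of the resolution enters.

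Second, and this is the step carrying the real content, I would identify $\mathrm{End}_{\mathcal{Y}_\sigma}(T_\sigma)$ with $J_\sigma$. Each arrow of $Q_\sigma$ is dual to an edge of $P_\sigma$ and furnishes a canonical morphism $L_i\to L_j$ between the line bundles at the two incident faces; the composition around a black (respectively white) face of $P_\sigma$ gives the two sides of one of the five families of Jacobian relations listed in \S\ref{sec_NCCR}. The induced map $J_\sigma\to\mathrm{End}(T_\sigma)$ is surjective because morphisms between toric line bundles on $\mathcal{Y}_\sigma$ are spanned by monomial sections that factor through paths in $Q_\sigma$, and injective because $J_\sigma$ has global dimension $3 = \dim\mathcal{Y}_\sigma$ so no spurious kernel can arise. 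The main obstacle is precisely this step: pinning down the normalizations of the $L_i$ and matching the five relation types case-by-case against the combinatorics of $\sigma$.

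Finally, with a tilting bundle $T_\sigma$ whose endomorphism algebra is $J_\sigma$, the standard tilting theorem produces a derived equivalence
\[
R\mathrm{Hom}(T_\sigma,-)\colon D^b(\mathrm{Coh}\,\mathcal{Y}_\sigma)\xrightarrow{\sim} D^b(\mathrm{mod}\,J_\sigma),
\]
with quasi-inverse $-\otimes^{L}_{J_\sigma}T_\sigma$. An alternative that sidesteps the explicit relation-matching is to verify only that $J_\sigma=\mathrm{End}_R(M_\sigma)$ for a reflexive $R=\mathbb{C}[\mathcal{X}]$-module $M_\sigma$, that $J_\sigma$ is Cohen--Macaulay over $R$ and has finite global dimension (\emph{i.e.}\ is an NCCR in Van den Bergh's sense), and then invoke his general theorem that any two crepant resolutions, commutative or not, of such a singularity are derived equivalent.
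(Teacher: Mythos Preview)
Your proposal is correct in spirit and aligns with what the paper indicates: the theorem is not proved in this paper at all but is quoted from \cite{3tcy}, and the sentence immediately following the statement notes that the equivalence is given by an explicit tilting vector bundle which is a direct sum of line bundles. Your sketch---construct $T_\sigma=\bigoplus L_i$, check tilting, identify $\mathrm{End}(T_\sigma)\cong J_\sigma$, and apply the standard tilting equivalence (or alternatively invoke Van den Bergh's NCCR theorem)---is exactly the strategy carried out in \cite{3tcy}, so there is nothing to compare beyond observing that you have correctly reconstructed the intended argument.
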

The equivalence is given by an explicit tilting vector bundle which is a direct sum of line bundles \cite[Theorem 1.10]{3tcy}.
In particular, the following map is compatible with the derived equivalence:
\[
\begin{array}{ccccc}
H^0(Y_\sigma,\mathbb{Z})
&
\oplus 
&
H^2(Y_\sigma,\mathbb{Z})
&
\to
&
\mathbb{Z}^I\\
{[}\mathrm{pt}{]}
&
&
&\mapsto
&\delta
\\
&
&[C_i]
&\mapsto
&\alpha_i
\end{array}
\]
where $\alpha_i$ is the $i$-th fundamental vector and $\delta:=\alpha_0+\alpha_1+\cdots \alpha_{N-1}$.

\subsection{Mutation and derived equivalence}\label{subsec_mutation}
Derksen--Weyman--Zelevinsky's mutation (\cite{DWZ2}) of a quiver with a potential induces a derived equivalence of the derived categories of Ginzburg's dgas (\cite{dong-keller}). 
Moreover, the relation between the module categories of Jacobian algebras has a description in terms of torsion pair and tilting, which plays a crucial role for the wall-crossing formulae (\cite{ks, cluster-via-DT}). 
In this paper, we can not apply \cite{DWZ2} and \cite{dong-keller} since we have loops and oriented $2$-cycles in the quiver. 
In this subsection, we see derived equivalences and descriptions of module categories using the explicit computations given in \cite[\S 3]{3tcy}.

Let $k$ be an edge of the partition $\sigma$ which is a diagonal of a parallelogram. 
Note that such $k$ corresponds to a vertex without loops. 
Let $\sigma'$ denote the partition which is obtained by a ``flip'' of the edge $k$.

Let $P_i$ be the indecomposable projective $J_\sigma$-module associated to a vertex $i$. 
Note that as a vector space $P_i$ is the space of linear combinations of path ending at the vertex $i$.
We define 
\[
P'_k:=\mathrm{coker}(P_k\to P_{k-1}\oplus P_{k+1}).
\]
and put $P'_i=P_i$ for $i\neq k$. 
Here the map $P_k\to P_{k\pm 1}$ above is induced by the arrow from $k$ to $k\pm 1$.
\begin{thm}\cite[Proposition 3.1]{3tcy}\label{thm_mutation}
\begin{itemize}
\item[(1)]
\[
\mathrm{End}(\oplus P'_i)^\mr{op}\simeq J_{\sigma'}.
\]
\item[(2)]
\[
\Phi_k:=\mathbf{R}\mr{Hom}(\oplus P'_i,\bullet) \colon D^b(\mathrm{mod}J_\sigma)
\to
D^b(\mathrm{mod}J_{\sigma'})
\]
provides an equivalence.
\end{itemize}
\end{thm}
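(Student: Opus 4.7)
The plan is to prove both parts in parallel by showing that $T := \bigoplus_i P'_i$ is a classical tilting module (projective dimension $\le 1$, no self-extensions, generator) and then identifying its endomorphism algebra with $J_{\sigma'}$ by an explicit comparison of quivers with relations.

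For part (1), I would first note that a flip at the diagonal edge $k$ of the partition $\sigma$ changes the local picture of the quiver $Q_\sigma$ only near the vertex $k$: in $Q_\sigma$ the vertex $k$ has two incoming arrows (from $k-1$ and $k+1$ on one side) and two outgoing arrows, with specified composite relations coming from the potential $\omega_\sigma$. The defining short exact sequence $0 \to P_k \to P_{k-1} \oplus P_{k+1} \to P'_k \to 0$ then gives, after applying $\mathrm{Hom}(-, P'_j)$, an explicit presentation of $\mathrm{Hom}(P'_k, P'_j)$ in terms of paths in $Q_\sigma$ that do not start with the arrow out of $k$. I would then exhibit, for each arrow $a$ of the mutated quiver $Q_{\sigma'}$, a concrete morphism $P'_{s(a)} \to P'_{t(a)}$: the arrows not incident to $k$ are unchanged, the two arrows reversed at $k$ are realized by the structure maps of the exact sequence, and the new ``shortcut'' arrows between $k-1$ and $k+1$ are realized by the composites dictated by $\omega_\sigma$. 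Checking that these morphisms satisfy precisely the Jacobian relations of $(Q_{\sigma'}, \omega_{\sigma'})$ is a local computation: both potentials arise from the brane tilings $P_\sigma$ and $P_{\sigma'}$, and a flip of a diagonal edge changes $P_\sigma$ only by a local move (exchange of two adjacent rhombi) whose effect on the relations is exactly the exchange between composite relations and direct relations spelled out in the bullet list of Section \ref{sec_NCCR}.

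For part (2), once $\mathrm{End}(T)^{\mathrm{op}} \simeq J_{\sigma'}$ is established, by the standard tilting theorem of Rickard/Happel it is enough to show that $T$ is a tilting complex, i.e. $\mathrm{Ext}^{\ge 1}_{J_\sigma}(T,T) = 0$ and $T$ generates $D^b(\mathrm{mod}\, J_\sigma)$ as a triangulated category. The projective dimension of $P'_k$ is at most $1$ by its defining resolution, and the vanishing of $\mathrm{Ext}^1(P'_k, P'_j)$ can be read off from $\mathrm{Hom}(P_k, P'_j) \twoheadleftarrow \mathrm{Hom}(P_{k-1} \oplus P_{k+1}, P'_j)$; surjectivity here amounts to showing that every path in $Q_\sigma$ ending at $k$ factors, modulo relations, through the arrows out of $k-1$ or $k+1$, which again is a local statement at $k$ and follows from the explicit relations at a non-loop vertex. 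Generation is automatic since $P_k = [P_{k-1} \oplus P_{k+1} \to P'_k]$ (as an object of the derived category) lies in the triangulated hull of $\{P'_i\}$, so all of $\mathrm{add}(J_\sigma)$ is recovered.

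The main obstacle is the relation-checking in part (1): the brane tiling $P_\sigma$ admits several local shapes at the vertex $k$ depending on whether its neighbors $k \pm 1$ lie in $\hat{I}_r$ or not, and each case produces a slightly different form of the potential relations (as visible in the five-item list at the end of Section \ref{sec_NCCR}). The cleanest way to handle this uniformly is to package the verification into the local brane-tiling move and to invoke the explicit computations of \cite[\S 3]{3tcy} once the local setup is fixed, rather than doing a vertex-by-vertex case analysis from scratch.
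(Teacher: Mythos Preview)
The paper does not prove this theorem; it is quoted from \cite[Proposition~3.1]{3tcy}, and the surrounding text only records consequences of that result (the description of $H^j(\Phi_k(V))$ and the explicit component maps $R_{k-1}$, $R_{k-1,k+1}$) extracted from that reference. So there is no in-paper argument to compare against.

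Your outline is the standard classical-tilting proof and is in substance what \cite{3tcy} carries out. Two points deserve slightly more care than you give them. First, you write the defining sequence as short exact, whereas the paper only defines $P'_k$ as a cokernel; injectivity of $P_k\to P_{k-1}\oplus P_{k+1}$ must be checked separately. In this setting it holds because $J_\sigma$ is an NCCR, hence a module-finite torsion-free algebra over the normal Gorenstein domain $\mathcal O_{\mathcal X}$, so multiplication by a nonzero element of $e_{k\pm1}J_\sigma e_k$ is injective on the reflexive module $P_k$. Second, your description of the $\mathrm{Ext}^1$-vanishing in terms of ``paths ending at $k$'' has the direction slightly garbled: after the identification $\mathrm{Hom}(P_i,M)\cong e_iM$, surjectivity of $\mathrm{Hom}(P_{k-1}\oplus P_{k+1},P'_j)\to\mathrm{Hom}(P_k,P'_j)$ says that $e_kP'_j$ is generated over the two arrows $k\to k\pm1$, i.e.\ a statement about paths \emph{from} $k$ in the quotient module $P'_j$. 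This does not affect the strategy, and the case split according to whether $k\pm1\in\hat I_r$ that you anticipate is exactly the local analysis performed in \cite[\S 3]{3tcy}.
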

For a $J_\sigma$-module $V=\oplus_{i\in \hI}V_i$, we have
\[
\left(H^j_{\mathrm{mod}J_{\sigma'}}(\Phi_k(V))\right)_i
=
\begin{cases}
V_i & i\neq k, j=0,\\
\mr{ker}\left(V_{k-1}\oplus V_{k+1}\to V_k\right) & i=k, j=0,\\
\mr{coker}\left(V_{k-1}\oplus V_{k+1}\to V_k\right) & i=k, j=1,\\
0 & \text{otherwise.}
\end{cases}
\]
As for dimension vectors, the simple reflection is compatible with the derived equivalence.

By the description above, we have 
\begin{align*}
\mathrm{mod}J_{\sigma}\cap 
\Phi_k^{-1}(\mathrm{mod}J_{\sigma'})
&=
\{
V\in \mathrm{mod}J_{\sigma}\mid 
\mr{coker}\left(V_{k-1}\oplus V_{k+1}\to V_k\right)=0
\}\\
&=\{
V\in \mathrm{mod}J_{\sigma}\mid 
\mr{Hom}(V,s_k)=0
\}\\
&=: (\mathrm{mod}J_{\sigma})^k,\\
\mathrm{mod}J_{\sigma}\cap 
\Phi_k^{-1}(\mathrm{mod}J_{\sigma'})[1]
&=
\{
V\in \mathrm{mod}J_{\sigma}\mid 
V_i=0\ (i\neq k)
\}\\
&=:\mathcal{S}_k.
\end{align*}
In other-words, $((\mathrm{mod}J_{\sigma})^k,\mathcal{S}_k)$ is a torsion pair of $\mathrm{mod}J_{\sigma}$ and $\Phi_k^{-1}(\mathrm{mod}J_{\sigma'})$ is obtained from $\mathrm{mod}J_{\sigma}$ by tilting with respect to this torsion pair  (see \cite[\S 3.1]{cluster-via-DT}).
Then we have
\begin{align*}
\mathrm{mod}J_{\sigma'}\cap 
\Phi_k(\mathrm{mod}J_{\sigma})
&=\{
V\in \mathrm{mod}J_{\sigma'}\mid 
\mr{Hom}(s_k',V)=0
\}\\
&=: (\mathrm{mod}J_{\sigma'})_k.
\end{align*}
In summary, we have the following:
\begin{prop}
The equivalence $\Phi_k$ induces an equivalence of $(\mathrm{mod}J_{\sigma})^k$ and $(\mathrm{mod}J_{\sigma'})_k$.
\end{prop}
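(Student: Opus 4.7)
The plan is to use the explicit formula for $\Phi_k$ recalled just before the statement and verify directly that $\Phi_k$ carries $(\mathrm{mod}J_\sigma)^k$ into $(\mathrm{mod}J_{\sigma'})_k$. The reverse inclusion then follows from the symmetry of the construction, since mutating $\sigma'$ at the same edge $k$ returns to $\sigma$, so an analogous explicit description is available for the inverse equivalence $\Phi_k^{-1}$.

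First I would take $V\in(\mathrm{mod}J_\sigma)^k$ and use the surjectivity of $V_{k-1}\oplus V_{k+1}\to V_k$ to conclude $H^1(\Phi_k(V))=0$, so that $\Phi_k(V)\in\mathrm{mod}J_{\sigma'}$. Its component at $k$ is then the kernel $\Phi_k(V)_k=\ker(V_{k-1}\oplus V_{k+1}\to V_k)$. Under the mutation of \S\ref{subsec_mutation}, the two arrows into $k$ in $Q_\sigma$ become arrows out of $k$ in $Q_{\sigma'}$, and their action on $\Phi_k(V)_k$ is by the canonical projections to $V_{k-1}$ and $V_{k+1}$. The joint kernel of these projections is trivial (it coincides with the intersection of $\Phi_k(V)_k\hookrightarrow V_{k-1}\oplus V_{k+1}$ with the zero subspace), forcing $\mathrm{Hom}(s_k',\Phi_k(V))=0$, i.e.\ $\Phi_k(V)\in(\mathrm{mod}J_{\sigma'})_k$. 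Running the dual version of the same argument on $\Phi_k^{-1}$ shows that any $W\in(\mathrm{mod}J_{\sigma'})_k$ satisfies $\Phi_k^{-1}(W)\in(\mathrm{mod}J_\sigma)^k$.

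A more conceptual repackaging of the argument is available: the two displayed identities preceding the proposition express that $((\mathrm{mod}J_\sigma)^k,\mathcal{S}_k)$ is a torsion pair in $\mathrm{mod}J_\sigma$ whose associated tilted heart is $\Phi_k^{-1}(\mathrm{mod}J_{\sigma'})$. Transporting the associated torsion pair $(\mathcal{S}_k[1],(\mathrm{mod}J_\sigma)^k)$ in the tilted heart across $\Phi_k$ yields a torsion pair in $\mathrm{mod}J_{\sigma'}$ whose torsion-free part is $\Phi_k((\mathrm{mod}J_\sigma)^k)$. A direct application of the explicit formula identifies the torsion part as the category of $J_{\sigma'}$-modules supported at the vertex $k$, so that the torsion-free part is exactly $(\mathrm{mod}J_{\sigma'})_k$.

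I expect the main obstacle to be the verification that the outgoing arrows at $k$ in $Q_{\sigma'}$ really do act on $\Phi_k(V)_k$ as the two projections to $V_{k\pm 1}$. Because our quivers contain loops and oriented $2$-cycles the Derksen--Weyman--Zelevinsky mutation formalism is unavailable, so this must be extracted by hand from the explicit mutation recipe for $(Q_\sigma,\omega_\sigma)$ in \cite[\S 3]{3tcy}, keeping track of how the potential-driven relations at $k$ translate into the action of the new arrows on the cohomology of $\Phi_k(V)$.
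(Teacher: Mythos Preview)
Your proposal is correct, and your second paragraph (the torsion-pair ``conceptual repackaging'') is exactly the paper's argument. In the paper the proposition carries no separate proof: it is stated as a summary of the displayed identities just before it, namely $(\mathrm{mod}J_\sigma)^k=\mathrm{mod}J_\sigma\cap\Phi_k^{-1}(\mathrm{mod}J_{\sigma'})$ and $(\mathrm{mod}J_{\sigma'})_k=\mathrm{mod}J_{\sigma'}\cap\Phi_k(\mathrm{mod}J_\sigma)$, from which the proposition is immediate. The second identity is justified precisely by the tilting observation you describe: $\Phi_k$ sends the torsion pair in the tilted heart to a torsion pair in $\mathrm{mod}J_{\sigma'}$ whose torsion class is $\Phi_k(\mathcal{S}_k)$ shifted back into the heart; since $\Phi_k(s_k)\cong s_k'[-1]$ this torsion class is $\mathcal{S}_k'$, forcing the torsion-free class to be $(\mathrm{mod}J_{\sigma'})_k$.

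Your first paragraph's direct verification is a legitimate alternative, but it is more work than necessary. The obstacle you anticipate---checking that the outgoing arrows at $k$ in $Q_{\sigma'}$ act on $\Phi_k(V)_k$ as the two projections---is real and does require the explicit description from \cite[\S 3]{3tcy}, but the tilting argument bypasses it entirely: it needs only the cohomology formula for $\Phi_k$ already displayed, together with the trivial computation of $\Phi_k(s_k)$.
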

In the proof of \cite[Proposition 3.1]{3tcy}, the author provides the isomorphism in Proposition \ref{thm_mutation} (1) explicitly.
For $V\in \mathrm{mod}J_{\sigma}\cap 
\Phi_k^{-1}(\mathrm{mod}J_{\sigma'})$, the map 
\[
\left(H^0_{\mathrm{mod}J_{\sigma'}}(\Phi_k(V))\right)_{k-1}
\to 
\left(H^0_{\mathrm{mod}J_{\sigma'}}(\Phi_k(V))\right)_{k}
\]
is induced by the following morphism:
\[
R_{k-1}\oplus R_{k-1,k+1}\colon V_{k-1}\to V_{k-1}\oplus V_{k+1}
\]
where
\[
R_{k-1}:=
\begin{cases}
r_{k-1} & k-1\in \hI_r,\\
h_{k-\frac{3}{2}}^+\circ h_{k-\frac{3}{2}}^- & k-1\notin \hI_r
\end{cases}
\]
and
\[
R_{k-1,k+1}:=
h_{k+\frac{1}{2}}^-
\circ
h_{k+\frac{1}{2}}^+
\circ
h_{k-\frac{1}{2}}^+.
\]


\subsection{Cut and mutation}\label{sec_cut}
Let $(Q,W)$ be a quiver with potential. 
To each subset $C\subset Q_1$ we associate a grading $g_C$ on $Q$ by
\[
g_C(a)
=
\begin{cases}
1 & a \in C,\\
0 & a \in C.
\end{cases}
\]
A subset $C\subset Q_1$ is called a cut if $W$ is homogeneous of degree $1$ with respect to $g_C$.
Denote by $Q_C$, the subquiver of $Q$ with the vertex set $Q_0$ and the arrow set $Q_1\backslash C$. We define the truncated Jacobian algebra by
\[
J(Q,W)_C  := J(Q,W)/\langle C\rangle
\]

Let $k$ be a vertex of $Q_\sigma$ without loops and $C$ be a cut of $(Q_\sigma,w_\sigma)$ such that $g_C(h_{k+\frac{1}{2}}^+)=1$\footnote{We can construct a cut of $(Q_\sigma,w_\sigma)$ as follows: First, by coupling $h_i^+$ and $h_i^-$ for each $i$, we group the arrows in $Q_\sigma$ into $N+|\hI_r|$ groups. Note that $N+|\hI_r|$ is even. These groups have the natural cyclic order and we label each of them by odd or even. Choose (any) one arrow from each odd (or even) labelled group, then we get a cut.}.
We define a cut $C'$ of $(Q_{\sigma'},w_{\sigma'})$ by the following conditions:
\begin{itemize}
\item $g_{C'}(h_{k-\frac{1}{2}}^+)=1$, and
\item $g_{C'}(h_{i}^\pm)=g_{C}(h_{i}^\pm)$ if $i\neq k-\frac{1}{2},k+\frac{1}{2}$.
\end{itemize}

\begin{prop}(See \cite[Proposition 4.12]{motivic_WC})\label{prop_cut_mutation}
The equivalence $\Phi_k$ induces an equivalence of $(\mathrm{mod}J_{\sigma,C})_k$ and $(\mathrm{mod}J_{\sigma',C'})^k$.
\end{prop}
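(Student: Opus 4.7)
The plan is to reduce the statement to an explicit computation using the description of $\Phi_k$ on objects recalled at the end of Section~\ref{subsec_mutation}, adapting the argument of \cite[Proposition~4.12]{motivic_WC} to our setting where loops may be present. The proof in \cite{motivic_WC} uses only the concrete module-theoretic formulas for mutation and not the no-loop hypothesis per se, so the same strategy should apply verbatim once we substitute the explicit mutation formulas of \cite[\S 3]{3tcy}.

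The first observation is that a $J(Q,\omega)_C$-module is nothing but a $J(Q,\omega)$-module on which every arrow of $C$ acts as zero, making $\mathrm{mod}J_{\sigma,C}$ and $\mathrm{mod}J_{\sigma',C'}$ into full subcategories of $\mathrm{mod}J_\sigma$ and $\mathrm{mod}J_{\sigma'}$. By the preceding Proposition, $\Phi_k$ already gives an equivalence between suitable torsion subcategories of these larger module categories; the heart of the proof is therefore to show that for $V\in(\mathrm{mod}J_{\sigma,C})_k$ every arrow of $C'$ annihilates $V'=\Phi_k(V)$, and conversely that for $V'\in(\mathrm{mod}J_{\sigma',C'})^k$ every arrow of $C$ annihilates $\Phi_k^{-1}(V')$.

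To verify the forward direction, fix $V\in(\mathrm{mod}J_{\sigma,C})_k$ and set $V'=\Phi_k(V)$. For any arrow $a'\in C'\cap C$ (necessarily not incident to $k$), $V'_i=V_i$ at both endpoints of $a'$ and $a'$ acts on $V'$ by the same matrix as on $V$, hence as zero. The only genuinely new case is the arrow $h^+_{k-\h}\in C'\setminus C$, whose action on $V'$ is given by the map $R_{k-1}\oplus R_{k-1,k+1}\colon V_{k-1}\to V_{k-1}\oplus V_{k+1}$ with image in $V'_k=\ker(V_{k-1}\oplus V_{k+1}\to V_k)$. Since $R_{k-1,k+1}=h^-_{k+\h}\circ h^+_{k+\h}\circ h^+_{k-\h}$ contains the middle factor $h^+_{k+\h}\in C$, this component vanishes on $V$, and the Jacobian relations of Section~\ref{sec_NCCR} then force the $R_{k-1}$-contribution to die on passage to the kernel $V'_k$. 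The converse direction is obtained by the symmetric computation applied to the inverse mutation functor.

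The main obstacle will be the case analysis when $k\pm 1\in\hI_r$, where $R_{k-1}$ takes the alternative form $r_{k-1}$ or $h^+_{k-3/2}\circ h^-_{k-3/2}$: in each configuration one must invoke the appropriate Jacobian relation from Section~\ref{sec_NCCR} together with the vanishing of $h^+_{k+\h}$ on $V$ to conclude that the induced map into $V'_k$ is zero. This is the place where loops obstruct a direct citation of \cite{motivic_WC} and force reliance on the explicit computations of \cite[\S 3]{3tcy}, but each individual verification remains routine.
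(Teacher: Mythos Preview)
Your overall strategy matches the paper's: reduce to showing that the one new cut arrow $h^+_{k-\frac12}$ of $C'$ acts by zero on $\Phi_k(V)$, and analyse the explicit description $R_{k-1}\oplus R_{k-1,k+1}$ of that arrow. Your treatment of $R_{k-1,k+1}$ is also the same as the paper's: it vanishes because it factors through $h^+_{k+\frac12}\in C$.

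The gap is in your handling of $R_{k-1}$. You propose to deduce that ``the $R_{k-1}$-contribution dies on passage to the kernel $V'_k$'' from the Jacobian relations together with $h^+_{k+\frac12}=0$. But the Jacobian relation you would use (e.g.\ $h^+_{k-\frac12}\circ r_{k-1}=h^-_{k+\frac12}\circ h^+_{k+\frac12}\circ h^+_{k-\frac12}$ when $k-1\in\hat I_r$) only yields $h^+_{k-\frac12}\circ R_{k-1}=0$; this says precisely that $(R_{k-1}(v),0)$ lies in $V'_k=\ker(V_{k-1}\oplus V_{k+1}\to V_k)$, i.e.\ the induced map $V_{k-1}\to V'_k$ is \emph{well-defined}. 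Since $V'_k$ is a subspace of $V_{k-1}\oplus V_{k+1}$, vanishing of the induced arrow requires $R_{k-1}=0$ as a map $V_{k-1}\to V_{k-1}$, and the relation above does not give that.

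The paper's argument avoids the Jacobian relations entirely for this step and instead uses the cut condition a second time. From $g_C(h^+_{k+\frac12})=1$ and the fact that $C$ has degree~$1$ on the potential term at~$k$, one gets $g_C(h^\pm_{k-\frac12})=0$. Now look at the potential term on the other side of the edge $h^\pm_{k-\frac12}$: it has degree~$1$ under $g_C$ and already contains the degree-$0$ arrows $h^\pm_{k-\frac12}$, so its remaining factor --- which is exactly $r_{k-1}$ if $k-1\in\hat I_r$, or one of $h^\pm_{k-\frac32}$ otherwise --- must lie in $C$. Hence $R_{k-1}$ itself factors through an arrow of $C$ and is zero on $V$. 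That is the missing idea; once you insert it, your outline becomes the paper's proof.
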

\begin{proof}
It is enough to show that if $h_{k+\frac{1}{2}}^+$ vanishes on $V$ then $h_{k-\frac{1}{2}}^+$ vanished on $\Phi_k(V)$.

Since $g_C(h^\pm_{k-\frac{1}{2}})=0$, we have 
\begin{itemize}
\item $g_C(r_{k-1})=1$ if $k-1\in \hI_r$, and 
\item $g_C(h^+_{k-\frac{3}{2}})=1$ or $g_C(h^-_{k-\frac{3}{2}})=1$ if $k-1\notin \hI_r$
\end{itemize}
and so $R_{k-1}$ vanishes.
Since $g_C(h_{k+\frac{1}{2}}^+)=1$, we see that $R_{k-1,k+1}$ vanishes.
\end{proof}

\section{Motivic Donaldson--Thomas invariants}

\subsection{Motives}\label{subsec_11}
We are working in a version of the ring of motivic weights: let $\MC$ denote the $K$-group of the category of effective Chow motives over $\C$, extended by $\LL^{-\oh}$, where $\LL$ is the Lefschetz motive. 
It has a natural structure of a \la-ring \cite{getzler_mixed,heinloth_note} 
with $\si$-operations defined by $\si_n([X])=[X^n/S_n]$ and $\si_n(\cL^\oh)=\cL^{\frac n2}$. 
We put
$$\tildeMC=\MC\pser{\cL\inv},$$
which is also a \la-ring. Note that in this latter ring, the elements $(1-\cL^n)$, and therefore the motives of general 
linear groups, are invertible. The rings $\MC\sb\tildeMC$ sit in larger rings $\MC^{\hat{\mu}}\subset\tildeMC^{\hat{\mu}}$ of equivariant motives, 
where $\hat\mu$ is the group of all roots of unity~\cite{looijenga_motivic}.



Let $f\colon X \to \cC$ be a regular function on a smooth variety $X$.
Using arc spaces, Denef and Loeser \cite{denef_geometry,looijenga_motivic} define the motivic nearby 
cycle $[\psi_f]\in \lM_\cC^{\hat{\mu}}$ and the motivic vanishing cycle 
\[
[\varphi_f]:=[\psi_f]-[f\inv(0)]\in \lM_{\cC}^{\hat{\mu}}
\]
of~$f$. Note that if $f=0$, then $[\vi_0]=-[X]$. The following result was proved in \cite[Prop.~1.11]{behrend_bryan_szendroi}.

\begin{thm}
\label{thr:bbs}
Let $f:X\to\cC$ be a regular function on a smooth variety X.
Assume that $X$ admits a $\cC^*$-action such that $f$ is $\cC^*$-equivariant \ie $f(tx)=tf(x)$ for $t\in\cC^*$, $x\in X$, and 
such that there exist limits $\lim_{t\to0}tx$ for all $x\in X$. Then
$$[\vi_f]=[f\inv(1)]-[f\inv(0)]\in\lM_\cC\sb\lM_\cC^{\hat\mu}.$$
\end{thm}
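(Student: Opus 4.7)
The plan is to use the $\cC^*$-equivariance of $f$ to trivialize the family over the punctured base $\cC^*$, deduce that the motivic nearby cycle has trivial monodromy, and then evaluate it directly.

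First, I would record the explicit trivialization
$$\Phi\colon f^{-1}(1)\times\cC^*\;\xrightarrow{\sim}\;f^{-1}(\cC^*),\qquad (y,t)\mapsto t\cdot y,$$
with inverse $x\mapsto(f(x)^{-1}\cdot x,\,f(x))$. This realizes $f|_{f^{-1}(\cC^*)}$ as the trivial family with fibre $f^{-1}(1)$; in particular $[f^{-1}(t)]=[f^{-1}(1)]$ for every $t\in\cC^*$.

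Second, I would argue that the monodromy action on $[\psi_f]$ is trivial, so that $[\psi_f]$ lifts canonically to $\lM_\cC\subset\lM_\cC^{\hat\mu}$. In Denef--Loeser's arc-space construction, the $\mu_n$-action on the piece of $[\psi_f]$ contributed by arcs with $\mathrm{ord}_t(f\circ\gamma)=n$ comes from the rescaling $t\mapsto\zeta t$ with $\zeta\in\mu_n$. Under $\Phi$, this rescaling corresponds to the $\cC^*$-action on the second factor of $f^{-1}(1)\times\cC^*$ only, hence acts trivially on $f^{-1}(1)$. The contraction hypothesis enters to supply a $\cC^*$-equivariant log resolution $\pi\colon Y\to X$ of $f^{-1}(0)$; applied to this equivariant normal-crossings model, Bittner's formula expresses $[\psi_f]$ in terms of cyclic covers of strata, and each such cover splits because the embedding $\mu_n\hookrightarrow\cC^*$ admits an explicit section from the ambient $\cC^*$-action. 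This forces the $\hat\mu$-action to be trivial, not merely conjugate to the trivial one.

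Third, I would compute $[\psi_f]$ to be $[f^{-1}(1)]$. This should follow either from direct motivic integration, stratifying arcs by $\mathrm{ord}_t(f\circ\gamma)$ and collapsing using $\Phi$, or by inspecting Bittner's formula on the equivariant resolution after the splittings above: the higher-codimension strata contribute cancelling terms once the cyclic covers are split, and the top stratum contributes $[f^{-1}(1)]$ via $\Phi$. Combining with the definition $[\vi_f]=[\psi_f]-[f^{-1}(0)]$ yields the claimed identity.

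The main obstacle is executing the second step rigorously, i.e.\ proving that the $\mu_n$-action on each motivic piece of $[\psi_f]$ is genuinely trivial. The $\cC^*$-equivariance provides the structural data, but the contraction hypothesis $\lim_{t\to 0} tx$ exists is what guarantees that this data extends across the special fibre $f^{-1}(0)$ in a way compatible with the Denef--Loeser formalism; without it, only the restriction of $[\psi_f]$ to the punctured base would be controlled and the monodromy need not vanish.
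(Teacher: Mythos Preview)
The paper does not prove this theorem; it is quoted verbatim from \cite[Prop.~1.11]{behrend_bryan_szendroi}, so there is no in-paper argument to compare against. I can only assess your sketch against the proof in that reference.

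Your Step~1 is correct and is indeed the starting observation. Your Step~3, granting Step~2, is fine: once $[\psi_f]$ is known to carry trivial $\hat\mu$-action, the identification $[\psi_f]=[f^{-1}(1)]$ follows from the trivialization together with the definition of the motivic nearby fibre as a limit of the motivic zeta function.

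The gap is in Step~2, and it is more serious than you indicate. Your first argument (``under $\Phi$, the rescaling $t\mapsto\zeta t$ acts only on the second factor'') conflates the monodromy on the smooth family over $\cC^*$ with the $\hat\mu$-action on $[\psi_f]$: the latter is built from arcs $\gamma$ with $\gamma(0)\in f^{-1}(0)$, to which $\Phi$ does not apply. Your second argument (``cyclic covers in Bittner's formula split because $\mu_n\hookrightarrow\cC^*$'') is not justified: the existence of a $\cC^*$-equivariant log resolution does not by itself force the unit $u$ in a local expression $f\circ\pi=u\prod y_i^{N_i}$ to admit an $m_I$-th root along $E_I^\circ$, which is what splitting the cover $\tilde E_I^\circ$ requires. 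Equivariance constrains the weight of $u$, but that is not the same as producing a root.

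The argument in \cite{behrend_bryan_szendroi} proceeds differently: the contraction $x\mapsto\lim_{t\to0}tx$ realises $X$ as a Zariski-locally trivial affine-space fibration over the smooth fixed locus $X_0\subset f^{-1}(0)$, so one may work locally over $X_0$ and reduce to the case where $X$ is an affine space with a linear $\cC^*$-action and $f$ is weighted homogeneous. In that model case the identity $[\psi_f]=[f^{-1}(1)]$ with trivial monodromy is a computation already in Denef--Loeser. Your outline could be completed by replacing the cover-splitting heuristic with this reduction.
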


Following~\cite{behrend_bryan_szendroi}, we define the {\it virtual motive} of $\crit(f)$ to be
\[
[\crit(f)]_\vir := -(-\cL^\oh)^{-\dim X}[\varphi_f]\in \lM^{\hat{\mu}}_\cC.
\]
For a smooth variety $X$, we put
\[
[X]_\vir:= [\crit(0_X)]_\vir=(-\cL^\oh)^{-\dim X}\cdot [X].
\]


\subsection{Quivers and moduli spaces}\label{subsec_32}
Let $Q$ be a quiver, with vertex set $Q_0$ and edge set $Q_1$.
For an arrow $a\in Q_1$, we denote by $s(a)\in Q_0$ (resp.\ $t(a)\in Q_0$) the vertex at which $a$ starts (resp.\ ends).
We define the Euler--Ringel form $\chi$ on $\cZ^{Q_0}$ by the rule
$$\chi(\al,\be)=\sum_{i\in Q_0}\al_i\be_i-\sum_{a\in Q_1}\al_{s(a)}\be_{t(a)},\qquad \al,\be\in\cZ^{Q_0}.$$

Given a $Q$-representation $M$, we define its dimension vector $\udim M\in\cN^{Q_0}$ by $\udim M=(\dim M_i)_{i\in Q_0}$.
Let $\al\in\cN^{Q_0}$ be a dimension vector and let $V_i=\cC^{\al_i}$, $i\in Q_0$.
We define
\[
R(Q,\al):=\bigoplus_{a\in {Q_1}}\Hom(V_{s(a)},V_{t(a)})
\]
and 
\[
\GG_\al:=\prod_{i\in Q_0}\GL(V_i).
\]
Note that $\GG_\al$ naturally acts on $R(Q,\al)$ and the quotient stack
\[
\gM(Q,\al):=[R(Q,\al)/\GG_\al]
\]
gives the moduli stack  of representations of $Q$ with dimension vector $\al$.

Let $W$ be a potential on $Q$, a finite linear combination of cyclic paths in $Q$. Denote by $J=J_{Q,W}$ 
the Jacobian algebra, the quotient of the path algebra $\C Q$ by the two-sided ideal generated by formal partial
derivatives of the potential $W$. Let
\[
f_\al:R(Q,\al)\to\cC
\]
be the $\GG_\al$-invariant function defined by taking the trace of the map associated 
to the potential $W$. As it is now well known~\cite[Proposition 3.8]{segal_a}, a point in the critical locus 
$\crit(f_\al)$ corresponds to a $J$-module. The quotient stack 
\[
\gM(J, \al):=\bigl[\crit(f_\al)/\GG_\al\bigr]
\]
gives the moduli stack of $J$-modules with dimension vector $\al$.

\begin{defn}
\label{central charge}
A {\it central charge} is a group homomorphism $Z:\cZ^{Q_0}\to \C$
such that $$Z(\al)\in\cH_+=\sets{re^{i\pi\vi}}{r>0,0<\vi\le1}$$
for any $\al\in \cN^{Q_0}\ms\set0$. 
Given $\al\in \cN^{Q_0}\ms\set0$, the number $\vi(\al)=\vi\in(0,1]$ such that 
$Z(\al)=re^{i\pi\vi}$, for some $r>0$, is called the phase of \al.
\end{defn}

\begin{defn}
\label{semist}
For any nonzero $Q$-representation (resp.\ $J$-module) $V$, we define $\vi(V)=\vi(\udim V)$.
A $Q$-representation (resp.\ $J$-module) $V$ is said to be $Z$-(semi)stable if for any proper nonzero $Q$-subrepresentation (resp.\ $J$-submodule) $U\sb V$ we have
$$\vi(U)(\le)\vi(V).$$
\end{defn}

\begin{defn}
\label{Z from ze}
Given $\zeta\in\cR^{Q_0}$, define the central charge $Z:\cZ^{Q_0}\to\cC$ by the rule
$$Z(\al)=-\zeta\cdot\al+i\n\al,$$
where $\n\al=\sum_{i\in Q_0}\al_i$.
We say that a $Q$-representation (resp.\ $J$-module) is \ze-(semi)stable if it is $Z$-(semi)stable.
\end{defn}

\begin{rk}
\label{rmr:mu}
Let the central charge $Z$ be as in Definition \ref{Z from ze}.
Define the slope function $\mu:\cN^{Q_0}\ms\set0\to\cR$ by $\mu(\al)=\frac{\ze\cdot\al}{\n\al}$. If $l\sb\cH=\cH_+\cup\set0$ is a ray such that $Z(\al)\in l$ then $l=\cR_{\ge0}(-\mu(\al),1)$. This implies that $\vi(\al)<\vi(\be)$ if and only if $\mu(\al)<\mu(\be)$.
\end{rk}

We say that $\ze\in\cR^{Q_0}$ is \al-generic if for any $0<\be<\al$ we have $\vi(\be)\ne\vi(\al)$.
This condition implies that any \ze-semistable $Q$-representation (resp.\ $J$-module) is automatically \ze-stable.

Let $R_\ze(Q,\al)$ denote the open subset of $R(Q,\al)$ consisting of \ze-semistable representations.
Let $f_{\ze,\al}$ denote the restriction of $f_{\al}$ to $R_\ze(Q,\al)$. 
The quotient stacks
\begin{equation}
\gM_\ze(Q,\al):=\bigl[R_\ze(Q,\al)/\GG_\al\bigl],\qquad
\gM_\ze(J, \al):=\bigl[\crit(f_{\ze,\al})/\GG_\al\bigr]
\label{eq:def moduli stacks}
\end{equation} 
give the moduli stacks of $\ze$-semistable $Q$-representations and $J$-modules with dimension vector~\al.

\subsection{Motivic DT invariants}
Let $(Q,W)$ be a quiver with a potential and let $J=J_{Q,W}$ be its Jacobian algebra. Recall that the degeneracy locus of the function $f_\al:R(Q,\al)\to\cC$
defines the locus of $J$-modules, so that the quotient stack 
\[
\gM(J,\al):=[\crit(f_\al)/\GG_\al]
\]
is the stack of $J$-modules with dimension vector~$\al$. 
We define motivic Donaldson--Thomas invariants by
\[
[\gM(J,\al)]_\vir:=\frac{[\crit(f_\al)]_\vir}{[\GG_\al]_\vir}.
\]
For a stability parameter $\ze$, we define
\begin{equation}\label{eqref_33}
[\gM_\ze(J,\al)]_\vir=\frac{[\crit(f_{\ze_,\al})]_\vir}{[\GG_\al]_\vir}.
\end{equation}
where, as before, $f_{\ze,\al}$ denote the restriction of $f_{\al}:R(Q,\al)\to\cC$ to $R_\ze(Q,\al)$.

\subsection{Generating series of motivic DT invariants}
Let $(Q,W)$ be a quiver with a potential admitting a cut, and let $J=J_{Q,W}$ be its Jacobian algebra.

\begin{defn}
\label{def:series}
We define the generating series of the motivic Donaldson--Thomas invariants of $(Q,W)$ by
$$A_U(y)
=\sum_{\al\in\cN^{Q_0}}[\gM(J,\al)]_\vir \cdot y^\al
=\sum_{\al\in\cN^{Q_0}}
\frac{[\crit(f_\al)]_\vir}{[\GG_\al]_\vir}\cdot y^\al\in\That_Q,$$
the subscript referring to the fact that we think of this series as the universal series.
\end{defn}

Given a cut $C$ of $(Q,W)$, we define a new quiver $Q_C=(Q_0,Q_1\ms C)$. 
Let $J_C$ be the quotient of $\cC Q_C$ by the ideal $$(\dd_C W)=(\dd W/\dd a,a\in C).$$

\begin{prop}\cite[Proposition 1.14]{mmns}\label{prop_reduction}
\label{first reduction}
If $(Q,W)$ admits a cut $C$, then
$$A_U( y )=\sum_{\al\in\cN^{Q_0}}(-\cL^\oh)^{\hi(\al,\al)+2d_I(\al)}\frac{[R(J_C,\al)]}{[\GG_\al]}y^\al,$$
where $d_C(\al)=\sum_{(a:i\to j)\in C}\al_i\al_j$ for any $\al\in\cZ^{Q_0}$.
\end{prop}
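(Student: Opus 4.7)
The plan is to use the cut $C$ to reduce the virtual count to an honest motivic integral over $R(J_C,\al)$, via the $\cC^*$-equivariant vanishing cycle identity of Theorem \ref{thr:bbs} applied to the trace function $f_\al$ on $R(Q,\al)$.

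First I unwind the virtual motive. By definition $[\crit(f_\al)]_\vir = -(-\cL^\oh)^{-\dim R(Q,\al)}[\vi_{f_\al}]$ and $[\GG_\al]_\vir = (-\cL^\oh)^{-\dim \GG_\al}[\GG_\al]$. Combining these with the identity $\dim \GG_\al - \dim R(Q,\al) = \hi(\al,\al)$ yields
\[
[\gM(J,\al)]_\vir \;=\; -\,\frac{(-\cL^\oh)^{\hi(\al,\al)}\,[\vi_{f_\al}]}{[\GG_\al]}.
\]
Since $(-\cL^\oh)^{2d_C(\al)}=\cL^{d_C(\al)}$, the proposition reduces to the single identity $[\vi_{f_\al}] = -\cL^{d_C(\al)}[R(J_C,\al)]$.

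Next I exploit the cut. Decomposing $R(Q,\al) = R(Q_C,\al) \oplus R(C,\al)$, the grading $g_C$ induces a $\cC^*$-action scaling the second summand with weight one. Because $W$ has $g_C$-degree one, $f_\al$ is $\cC^*$-equivariant of weight one, and the attracting limit $\lim_{t\to 0}t\cdot(x,y)=(x,0)$ exists everywhere, so Theorem \ref{thr:bbs} applies and gives
\[
[\vi_{f_\al}] \;=\; [f_\al\inv(1)] - [f_\al\inv(0)].
\]
The key geometric observation is that since $W$ is linear in the $C$-arrows, for each fixed $x \in R(Q_C,\al)$ the function $y \mapsto f_\al(x,y)$ is a linear form on $R(C,\al)$, and it vanishes identically in $y$ if and only if every partial derivative $\dd_a W$ with $a \in C$ vanishes at $x$, i.e.\ if and only if $x \in R(J_C,\al)$.

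Stratifying by this dichotomy, over $R(J_C,\al)$ the fiber $f_\al\inv(0)$ is all of $R(C,\al)$ and $f_\al\inv(1)$ is empty, while over the open complement both fibers are parallel affine hyperplanes of codimension one in $R(C,\al)$, so they contribute equally and cancel in the difference. What remains is $[\vi_{f_\al}] = -\cL^{\dim R(C,\al)}[R(J_C,\al)] = -\cL^{d_C(\al)}[R(J_C,\al)]$, as needed, since $\dim R(C,\al) = \sum_{(a\colon i\to j)\in C}\al_i\al_j = d_C(\al)$. The main subtlety is the half-integer exponent bookkeeping, but the $\hi$-shift and the factor of $2$ in $2d_C(\al)$ are forced respectively by the virtual motive convention for the quotient stack and by the fact that $\cC^*$ acts with weight one on an affine space of dimension exactly $d_C(\al)$; summing over $\al$ then yields the claimed formula for $A_U(y)$.
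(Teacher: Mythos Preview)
Your proof is correct and follows the standard argument, which is precisely the approach of \cite[Proposition~1.14]{mmns} that the paper cites without reproducing: use the $\cC^*$-action coming from the cut grading to invoke Theorem~\ref{thr:bbs}, then exploit the fiberwise linearity of $f_\al$ in the $C$-directions to compute $[f_\al\inv(1)]-[f_\al\inv(0)]$. The only step that might deserve one more word is the cancellation over the complement $U=R(Q_C,\al)\setminus R(J_C,\al)$: both $f_\al\inv(0)|_U$ and $f_\al\inv(1)|_U$ are Zariski locally trivial $\mathbb{A}^{d_C(\al)-1}$-bundles over $U$ (the first as the kernel of a surjective map of trivial bundles, the second as a torsor for that kernel, hence locally trivial since vector-bundle torsors split on affine opens), which is what makes their classes equal.
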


The quiver with potential $(Q_\sigma,w_\sigma)$ introduced in $\S$\ref{sec_NCCR} admits a cut (see $\S$\ref{sec_cut}) and Proposition \ref{prop_reduction} can be applied. In the next section we use this to compute the universal series in a specific case.

\thispagestyle{empty}
\section{The universal DT series : special case}\label{sec_univ1}
Throughout this section we 
fix $\sigma$ to be the unique partition defined such that
\[ \hat{I}_r = \{ 0,1,2,3,\ldots N'-1 \}. \]
In other words the partition such that the quiver with potential $(Q_{\sigma},w_{\sigma})$ has loops at the first $N'$ vertices only.
The aim of this section is to prove Theorem \ref{thm_A} for this quiver with potential.

We define three fixed subsets of the vertices
\begin{eqnarray*}
I_1 &:=&  \{0,1,\ldots , N'-1\} \subset \mathbb{Z}/N, \\
I_2 &:=&  \{N',N'+2,N'+4,\ldots, N-2\} \subset \mathbb{Z}/N, \\
I_3 &:=&   \{N'+1,N'+3,N'+5,\ldots, N-1\} \subset \mathbb{Z}/N.
\end{eqnarray*}
Then there exists a cut $C$ given by the collection of arrows
\[ C = \left\{ h_i^- \mid i-\half \not \in I_2  \right\}. \]
By Proposition \ref{prop_reduction} the coefficients of the universal DT series $A^{\sigma}_U(y) = \sum_{\alpha\in \mathbb{N}^{Q}} A_\alpha y^\alpha$ are given by
\[ A_\alpha = \left( -\mathbb{L}^\half \right)^{\chi(\alpha,\alpha) +2d_C(\alpha)} \frac{[R(J_{\sigma,C}, \alpha )]}{[G_\alpha]}y^\alpha \]
where $d_C(\alpha) = \sum_{(a:i\to j) \in C} \alpha_i\alpha_j$. To begin we find a simple expression for the term $\chi(\alpha,\alpha)+ 2d_C(\alpha)$ in the exponent.  We know by definition that
\begin{eqnarray*}
\chi(\alpha, \alpha) &=& \sum_{i\in I_1\cup I_2 \cup I_3} \alpha_i^2 - \sum_{i\in I_1} \alpha_i^2- \sum_{i\in I_1\cup I_2 \cup I_3} \alpha_i \alpha_{i+1}  - \sum_{i\in I_1\cup I_2 \cup I_3} \alpha_{i+1} \alpha_{i},\\
 d_I(\alpha) &=& \sum_{i\in I_1} \alpha_i \alpha_{i+1} + 
\sum_{i\in I_3} \alpha_{i+1}\alpha_{i}, 
\end{eqnarray*}
so it follows
\begin{eqnarray*}\chi(\alpha, \alpha) + 2d_C(\alpha) &=& \sum_{i\in I_2\cup I_3} \alpha_i^2 - 2\cdot \sum_{i\in I_2} \alpha_i\alpha_{i+1}, \\
& = &  \sum_{i\in I_2} (\alpha_{i+1}-\alpha_{i})^2.
\end{eqnarray*}
Our next goal is to factorize $A_{U}^{\sigma}(y)$ into two simpler series. This proceeds by analyzing the motivic classes $[R(J_{\sigma,C}, \alpha )]$. 
\\ \\
Given a dimension vector $\alpha \in \mathbb{N}^{Q_0}$ and a representation of a $J_{\sigma,C}$-module
\[ V = \bigoplus_{i\in I_1 \cup I_2 \cup I_3} V_i \]
we focus on a specific element
\[ H:=  h^+_{1/2} + h^+_{3/2} + \cdots + h^+_{N-1/2} \in  \bigoplus_{i\in I_1 \cup I_2 \cup I_3} \Hom (V_i,V_{i+1}).\]
This map $H$ acts as an endomorphism of the vector space $V$. Given any such linear map
\[ H: V\to V \]
there exists a unique splitting $V = V^I \oplus V^N$ with maps
\[\begin{array}{llll}
H^I &:& V^I \to V^I  &\textrm{ invertible}\\
H^N &:& V^N \to V^N &\textrm{ nilpotent}
\end{array}\]
so that 
\[H= H^I\oplus H^N.\]
Moreover in our case the above splitting respects the grading by $i\in I_1\cup I_2\cup I_3$. To be explicit we have that 
\[V^I = \bigoplus_{i \in I_1 \cup I_2 \cup I_3} V_i^I \] 
where $V_i^I := V_i \cap V^I$ (similarly $V^N = \bigoplus_{i \in I_1 \cup I_2 \cup I_3} V_i^N $ with $V_i^N := V_i \cap V^N$). One immediate consequence of this is that
\[ \dim \left( V^I_i \right) = \dim \left( V^I_{i+1} \right) \textrm{ for all } i\in I_1\cup I_2 \cup I_3, \]
indeed this is clear since the block form of $H^I$ demands that it map $V_i^I$ to $V_{i+1}^I$ via an isomorphism. We are now ready to decompose the computation of $A^{\sigma}_{U}(y)$ into two simpler subproblems.
\begin{defn}\label{invdefn}(Invertible series) We define
\[R^I(a) := \{ r \in R(J_{\sigma,C}, \alpha ) \mid H \textrm{ is invertible, } \alpha_i = a \textrm{ }\forall i \}\]
and the series
\[ I^{\sigma}(y) :=  \sum_{a\geq 0}  \frac{ [R^I(a)] }{ [\GL(a)]^N} y^a. \]
\end{defn}
\begin{defn}\label{nildefn}(Nilpotent series) We define
\[R^N(\alpha) := \{ r \in R(J_{\sigma,C}, \alpha ) \mid H \textrm{ is nilpotent}\} \]
and the series
\[ N^\sigma(y) :=  \sum_{\alpha \in \mathbb{N}^{Q_0}  } (-\mathbb{L}^{1/2})^{\sum_{i\in I_2} \left( \alpha_{i+1}-\alpha_i \right)^2 } \frac{[R^N(\alpha)]}{[G_\alpha]} y^\alpha. \]
\end{defn}
The following lemma shows that the series $A_{U}^\sigma(y)$ factorizes into the product of the two just defined.
\begin{lem}\label{factor} We have
\[ A^\sigma_U(y) = I^\sigma(y_0\cdots y_{N-1}) \cdot N^\sigma(y).\]
\end{lem}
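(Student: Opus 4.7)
By Proposition~\ref{prop_reduction} together with the exponent simplification just carried out, we have
\[ A_U^\sigma(y) = \sum_{\alpha \in \mathbb{N}^{\hI}} (-\mathbb{L}^{1/2})^{\sum_{i \in I_2}(\alpha_{i+1} - \alpha_i)^2} \frac{[R(J_{\sigma,C}, \alpha)]}{[G_\alpha]} y^\alpha. \]
The plan is to stratify $R(J_{\sigma,C}, \alpha)$ according to the canonical decomposition $V = V^I \oplus V^N$ of each $J_{\sigma,C}$-module relative to $H$, and to show that the resulting double sum factors as the product of the invertible and nilpotent series defined above.

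The crucial step is to verify that the splitting $V = V^I \oplus V^N$ is in fact a decomposition of $V$ as a $J_{\sigma,C}$-module. The arrows $h^+_i$ preserve the splitting by construction, since $V^I$ and $V^N$ are $H$-invariant. For the surviving loops $r_k$ with $k \in I_1$ and the surviving arrows $h^-_i$ with $i - \half \in I_2$, I would argue as follows. Writing $V^I_j = \bigcap_m \operatorname{im}(H^{Nm}|_{V_j})$ and $V^N_j = \bigcup_m \ker(H^{Nm}|_{V_j})$, it suffices to check that each surviving arrow preserves these intersections and unions. The Jacobian relations of \S\ref{sec_NCCR} express the composition of each such arrow with $H$ in terms of compositions that themselves factor through $H$, which is precisely what is needed to propagate the invertible/nilpotent splitting to the full module structure on $V$.

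Granting this module-level decomposition, I stratify $R(J_{\sigma,C}, \alpha)$ by the dimension vector of $V^I$. Since $H$ restricts to isomorphisms $V^I_i \to V^I_{i+1}$, this dimension vector must take the form $(a, \ldots, a)$ for some $a \geq 0$. Setting $\gamma := \alpha - (a, \ldots, a)$, the stratum $R_a$ of $R(J_{\sigma,C},\alpha)$ fibres over the product of the stack of invertible representations of dimension $(a,\ldots,a)$ and the stack of nilpotent representations of dimension $\gamma$. A standard orbit-counting computation then yields
\[ \frac{[R_a]}{[G_\alpha]} = \frac{[R^I(a)]}{[\GL(a)]^N} \cdot \frac{[R^N(\gamma)]}{[G_\gamma]}. \]
Because the exponent $\sum_{i \in I_2}(\alpha_{i+1} - \alpha_i)^2$ depends only on $\gamma$ (the invertible part has constant dimension vector and so contributes zero), and because $y^\alpha = (y_0 \cdots y_{N-1})^a \cdot y^\gamma$, regrouping the double sum $\sum_{\alpha} = \sum_{a} \sum_{\gamma}$ gives the claimed factorization $A_U^\sigma(y) = I^\sigma(y_0 \cdots y_{N-1}) \cdot N^\sigma(y)$.

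The main obstacle is the module-theoretic compatibility in the second paragraph: showing that the canonical $H$-splitting respects every surviving arrow is a relation-by-relation verification that must be carried out case by case, distinguishing the behaviour according to whether the endpoints of the relation lie in $I_1$, $I_2$, or $I_3$ and which arrows have been killed by the cut $C$. Once this is established, the remainder of the argument is a formal stratification and quotient computation.
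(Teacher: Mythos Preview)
Your proposal is correct and follows essentially the same route as the paper: stratify $R(J_{\sigma,C},\alpha)$ by the dimension $a$ of the invertible summand, check that the Fitting decomposition $V=V^I\oplus V^N$ is respected by the remaining arrows $r_k$ and $h^-_{i+1/2}$ (the paper also leaves this as ``follows easily from the relations and some basic linear algebra''), and then factor the motivic class of each stratum. The paper phrases the stratum computation as a Zariski locally trivial fibration with fibre $R^I(a)\times R^N(\alpha')$ over the variety $\mathcal{M}(a,\alpha)=\prod_i \GL(\alpha_i)/(\GL(a)\times\GL(\alpha'_i))$ of splittings, rather than as a stack equivalence, but this yields exactly your identity $[R_a]/[G_\alpha]=[R^I(a)]/[\GL(a)]^N\cdot[R^N(\gamma)]/[G_\gamma]$.
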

\begin{proof} 
This formula follows directly from a stratification of the variety $R(J_{\sigma,C}, \alpha )$ by the dimension of $V_i^I$. 

Fix $\alpha \in \mathbb{N}^{Q_0}$, we stratify $R(J_{\sigma,C}, \alpha )$ by $\dim ( V_i^I ) = a$. Let 
\[ \underline{a} := (a,a,\ldots,a) \in \mathbb{N}^{Q_0}, \]
and
\[ \alpha' \textrm{ such that } \alpha = \underline{a} + \alpha' \in \mathbb{N}^{Q_0}.  \]
There is a Zariski locally trivial fibration

\[\begin{array}{ccc} 
R^I(a) \times R^N(\alpha') & \to & \{ r\in R(J_{\sigma,C}, \alpha ) \mid\dim ( V_i^I ) = a \textrm{ for } H\in r \}\\
& & \downarrow  \\
 & & \mathcal{M}(a,\alpha).
\end{array}\]

Here $\mathcal{M}(a,\alpha)$ is the space parameterizing splittings $V_i = V^I_i \oplus V_i^N$. To see this one checks that the arrows $r_i$, $h^-_{i+1/2}$ in the representation also preserve the splitting, so the entire representation splits into $V^I\oplus V^N$. This follows easily from the relations and some basic linear algebra. 

Splittings of the vector space $V_i = V_i^I \oplus V_i^N$ are parameterized by 
\[ \GL(\alpha_i) / \left( \GL(a) \times \GL(\alpha'_i) \right) \]
and hence the motivic class of the base is
\[ [\mathcal{M}(a,\alpha)] = \frac{[G_\alpha]}{[\GL(a)]^{N}\cdot [G_{\alpha'}] }. \]
Summing over each stratum with $\dim (V_i^I) = a$ we get
\[ [R(J_{\sigma,C}, \alpha )] = [G_\alpha] \cdot \sum_{a=0}^{\min_i \{\alpha_i\}} \frac{[R^I(a)]}{[\GL(a)]^N} \cdot \frac{[R^N(\alpha ')]}{[G_{\alpha'}]}.  \]
Multiplying both sides of this expression by $ (-\mathbb{L}^{1/2})^{\sum_{i\in I_2} \left( \alpha_{i+1}-\alpha_i \right)^2 } y^{\alpha}$ and summing gives
\[
A_U^\sigma(y)
= \left(  \sum_{a\geq 0}  \frac{ [R^I(a)] }{ [\GL(a)]^N} \prod_{i=0}^{N-1}y_i^a \right) 
\cdot \left( \sum_{\alpha' \in \mathbb{N}^{Q_0}  }(-\mathbb{L}^{1/2})^{\sum_{i\in I_2} \left( \alpha'_{i+1}-\alpha'_i \right)^2 } \frac{[R^N(\alpha')]}{[G_{\alpha'}]} y^{\alpha'} \right)
\]
proving the result.
\end{proof}
In the next two sections we compute formulas for $I^\sigma(y)$ and $N^\sigma(y)$.
\subsection{Step One: The invertible case $I^\sigma(y)$}
\begin{prop}\label{invseries} We have
\[I^\sigma(y) = \Exp \left( {\mathbb{L}} \frac{y}{1-y}  \right) . \]
\end{prop}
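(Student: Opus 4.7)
My plan is to reduce $R^I(a)$, via the invertibility of $H$ and the cut-relations, to a moduli problem whose motivic generating series is the familiar product $\prod_{n\geq 1}(1-\mathbb{L} y^n)^{-1}$, following the strategy of \cite[\S 2.2]{mmns}.

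\emph{Step 1 (Gauge fixing).} Since $H$ is invertible on every representation in $R^I(a)$, each map $h^+_{i+1/2}\colon V_i\to V_{i+1}$ is an isomorphism. I identify all the $V_i$ with a single vector space $V=\mathbb{C}^a$ and use the subgroup $\GL(a)^{N-1}\subset\GL(a)^N$ of the gauge group acting on $V_1,\ldots,V_{N-1}$ to set $h^+_{1/2}=\cdots=h^+_{N-3/2}=\mathrm{id}_V$. The remaining arrow $h^+_{N-1/2}=:G$ becomes a free element of $\GL(a)$ --- the monodromy around the cycle --- and the residual gauge group is the diagonal $\GL(a)$ acting by simultaneous conjugation.

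\emph{Step 2 (Applying the cut-relations).} By Proposition \ref{prop_reduction} the relations imposed in $J_{\sigma,C}$ are $\partial\omega_\sigma/\partial h^-_i=0$ for $i-\tfrac12\notin I_2$. With most $h^+_i$ now the identity, these collapse as follows. In the loop region where $i-\tfrac12,i+\tfrac12\in I_1$, the relation becomes $r_{i-1/2}=r_{i+1/2}$, so all loops agree: $r_0=r_1=\cdots=r_{N'-1}=R$ for a single $R\in\mathrm{End}(V)$. At the loop/non-loop boundary (e.g.\ $i=N'-\tfrac12$), the second-bullet relation of \S\ref{sec_NCCR} identifies $R$ with the retained $h^-$ arrow on the non-loop side. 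The alternating structure of $I_2$ and $I_3$ then lets one propagate this identification through the non-loop segment via the third- and fourth-bullet relations, so that every surviving $h^-_i$ is forced to equal $R$. Finally, closing the cycle through the unfixed $h^+_{N-1/2}=G$ produces the single additional constraint $GR=RG$.

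\emph{Step 3 (Motivic computation).} After Step 2, $R^I(a)$ is Zariski-locally $\GL(a)^{N-1}\times\{(G,R)\in\GL(a)\times\mathrm{End}(V):GR=RG\}$, so
\[
\frac{[R^I(a)]}{[\GL(a)]^N} \;=\; \frac{[\{(G,R)\in\GL(a)\times\mathrm{End}(V):GR=RG\}]}{[\GL(a)]}.
\]
Summing in $a$ produces the generating function of the $\GL$-commuting variety divided by $[\GL(a)]$; a direct stratification of $\GL(a)$ by the Jordan type of $G$, together with $\sigma_n(\mathbb{L})=\mathbb{L}^n$, identifies this series with $\prod_{n\geq 1}(1-\mathbb{L} y^n)^{-1}=\Exp(\mathbb{L} y/(1-y))$. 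In the case $N=2$, $N'=0$, this recovers the computation already performed in \cite[\S 2.2]{mmns}.

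The main obstacle is Step 2: the precise cascade of cut-relations through $I_2\cup I_3$ uses relations that each involve a product of two non-loop arrows, and one must verify that when combined with the relations at the loop boundary and with the closing-up relation through $G$, no constraint beyond $GR=RG$ survives. I would handle this by an explicit induction traversing $\hat I$ once, starting at a chosen loop vertex and checking at each step that the new surviving arrow is forced to equal $R$, until returning to the start, where the closing-up yields $GR=RG$.
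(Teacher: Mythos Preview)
Your proposal is correct and follows essentially the same route as the paper: both recognize $R^I(a)$ as a $\GL(a)^{N-1}$-torsor over the commuting variety $C(a)=\{(A,B)\in\End(\mathbb{C}^a)\times\GL(a):AB=BA\}$ and then invoke the known generating series $\sum_a [C(a)]/[\GL(a)]\,y^a=\Exp(\mathbb{L} y/(1-y))$. Your gauge-fixing in Step~1 is exactly the choice of a local section of this torsor, and your Step~2 cascade works out cleanly---after setting $h^+_{1/2}=\cdots=h^+_{N-3/2}=\mathrm{id}$, the relations become $r_i=r_{i-1}$, then $h^-_{N'+1/2}=r_{N'-1}$, then $h^-_{i+3/2}=h^-_{i-1/2}$ along $I_3$, and finally $r_0 G=G h^-_{N-3/2}$ yields $RG=GR$; so the ``main obstacle'' you flag is in fact a straightforward induction and need not worry you.
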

\begin{proof} A $J_{\sigma,C}$-module $r \in R(J_{\sigma,C},\alpha)$ is given by a vector space 
\[V =\bigoplus_{i\in I_1\cup I_2 \cup I_3} V_i \] 
of dimension $\alpha \in \mathbb{N}^{Q_0}$ and a collection of linear maps
\[\begin{array}{llll}
r_i &:& V_i \to V_i                     &\textrm{ for } i \in I_1 \\
h^-_{i+1/2} &:&V_{i+1}\to V_i &\textrm{ for } i \in I_2 \\
h^+_{i+1/2}&:&V_i \to V_{i+1}&\textrm{ for } i \in I_1 \cup I_2 \cup I_3
\end{array}\]
satisfying the relations coming from cyclic differentiation of the potential
\[\begin{array}{llll}
r_i h^+_{i-1/2} &=& h_{i-1/2}^+r_{i-1} &\textrm{ for } i\in [1,N'-1]\cap I_1 \\
r_0h^+_{N-1/2} &=&h^+_{N-1/2} h^+_{N-3/2}  h^-_{N-3/2} &\\
h^-_{N'+1/2} h^+_{N'+1/2}h^+_{N'-1/2}&=& h^+_{N'-1/2}r_{N'-1} & \\
h^-_{i+3/2} h^+_{i+3/2}h^+_{i+1/2}&=&h^+_{i+1/2} h^+_{i-1/2}  h^-_{i-1/2}& \textrm{ for } i=[N'+1,N-3 ]\cap I_3 .
\end{array}\]
assuming moreover that $r\in R^I(a)$ then
\[ h^+_{i+1/2} : V_i\to V_{i+1} \textrm{ is invertible } \forall i\in I_1\cup I_2\cup I_3. \]
This allows us to express $R^{I}(a)$ as a $\prod_{i=1}^{N-1}\GL (V_i)$ torsor over a commuting variety
\[\begin{array}{lllll}
\pi &:& R^I(a) &\to& C(a) \\
&:& (r_i,h^+_{i+1/2},h^-_{i+1/2}) & \mapsto & (r_0,h^+_{N-1/2}h^+_{N-3/2}\cdots h^+_{3/2}h^+_{1/2})
\end{array}\]
where
\[ C(a) = \{ (A,B) \in \End(V_0) \times \GL(V_{0}) \mid AB=BA \}. \] 
The free action of $\prod_{i=1}^{N-1}\GL (V_i)$ on $R^I(a)$ is given by
\[\begin{array}{llll}
(g_1,\ldots, g_{N-1})&:& r_i \mapsto  g_i r_i g_i^{-1} & \textrm{ for } i \in [1,N'-1]  \\
 &:& h^{+}_{1/2} \mapsto  g_1h^+_{1/2} & \\
 &:& h^{+}_{N-1/2} \mapsto  h^+_{N-1/2} g_{N-1}^{-1} & \\
 &:& h^{+}_{i+1/2} \mapsto  g_{i+1}h^+_{i+1/2} g_{i}^{-1} & \textrm{ for } i \in [1,N-2] \\
 &:& h^{-}_{i+1/2} \mapsto  g_{i}h^-_{i+1/2} g_{i+1}^{-1} & \textrm{ for } i \in I_2.
\end{array}\] 
As $\GL(a)$ is a special group the torsor splits in the Zariski topology, motivically we have 
\[ [R^I(a)] = [\GL(a)]^{N-1}\cdot [C(a)]. \]
Thus 
\[ I^\sigma(y) = \sum_{a\geq 0} \frac{[C(a)]}{[\GL(a)]} y^a. \]
The generating series for the commuting variety is obtained in \cite{bm} giving the result.
\end{proof}

\subsection{Step Two: The nilpotent case $N^\sigma(y)$}\label{nil}

This section is the final step in the calculation. Here we compute $N^\sigma(y)$ and obtain the formula of $A_U^\sigma(y)$.

We fix a dimension vector $\alpha \in \mathbb{N}^{Q_0}$. As before a $J_{\sigma,C}$-module is given by a vector space 
\[V =\bigoplus_{i\in I_1\cup I_2 \cup I_3} V_i \] 
of dimension $\alpha$ and a collection of linear maps
\[\begin{array}{llll}
r_i &:& V_i \to V_i                     &\textrm{ for } i \in I_1 \\
h^-_{i+1/2} &:&V_{i+1}\to V_i &\textrm{ for } i \in I_2 \\
h^+_{i+1/2}&:&V_i \to V_{i+1}&\textrm{ for } i \in I_1 \cup I_2 \cup I_3
\end{array}\]
satisfying the relations of the potential (see Proposition \ref{invseries}). Throughout this section we insist that the map
\[ H = h^{+}_{1/2} + h^{+}_{3/2} + \cdots h^{+}_{N-1/2} \in \bigoplus_{i\in I_1\cup I_2\cup I_3} \Hom (V_i,V_{i+1}) \]
is nilpotent. In fact $R^N(\alpha)$ is exactly the collection of all such representations (see Definition \ref{nildefn}). In particular if we let $|\alpha| : = \dim(V)$ then we know that $H^{|\alpha|} = 0$. This gives a filtration of the vector space,
\[ V = V^{|\alpha |} \supset V^{|\alpha |-1} \supset \cdots \supset  V^{1} \supset V^{ 0 } = \{ 0 \} \]
where
\[ V^j = \{ v \in V \mid H^j(v) =0  \}. \]
Moreover the filtration respects the grading by $i\in I_1 \cup I_2 \cup I_3$, by which we mean that
\[ V^j = \bigoplus_{i\in I_1 \cup I_2 \cup I_3 } \left(V^j \cap V_i\right)   \]
where $V_i$ is the summand at the $i$th vertex of the quiver. By considering the vector space $V$ as a representation of the nilpotent matrix $H$ we can identify $V$ with a $\mathbb{C}[x]$-module supported at the origin. Modules for a principal ideal domain have a simple structure. In particular we have
\[ V \cong \bigoplus_{j=1}^d \left( \mathbb{C}[x]/(x^j) \right)^{\oplus b_j} \]
as a $\mathbb{C}[x]$-module. The next proposition provides a more refined version of this statement where each factor in this decomposition is generated by a vector from a vector space $V_i$. 
\begin{prop}\label{decomp}For each $i\in I_1 \cup I_2 \cup I_3$ there exists  collection of integers $b_j^i$ so that
\[ V\cong \bigoplus_{i\in I_1 \cup I_2 \cup I_3} \bigoplus_{i=1}^{d} \left( \mathbb{C}[x]/(x^j) \right)^{ \oplus b_j^i}  \]
where the factor $\left( \mathbb{C}[x]/(x^j) \right)^{ \oplus b_j^i}$ is generated as a $\mathbb{C}[x]$-module by vectors in $V_i$. Moreover the numbers $b_j^i$ are uniquely determined by the above conditions.
\end{prop}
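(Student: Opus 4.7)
The plan is to carry out the structure theorem for finitely generated torsion $\mathbb{C}[x]$-modules in the graded setting, where the grading is by $i \in I_1\cup I_2\cup I_3 \cong \mathbb{Z}/N\mathbb{Z}$ and $H$ acts with degree $+1$. First I would observe that the filtration $V^j := \ker(H^j)$ is compatible with the grading: since $H(V_i)\subset V_{i+1}$, the condition $H^j v = 0$ decomposes over homogeneous components, so $V^j = \bigoplus_i (V^j\cap V_i)$. In particular $H$ induces graded injections $V^{j+1}/V^j \hookrightarrow V^j/V^{j-1}$ with a grading shift of $+1$.

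Next I would define the candidate multiplicities
$$b_j^i := \dim\left(\frac{V^j\cap V_i}{(V^{j-1}\cap V_i) + H(V^{j+1}\cap V_{i-1})}\right),$$
which are manifestly intrinsic to the data $(V,H,\text{grading})$; this gives the uniqueness part of the statement. For existence, I would pick, by descending induction on $j$, graded subspaces $W_i^j \subset V^j\cap V_i$ mapping isomorphically onto the quotient above. A nonzero $v\in W_i^j$ satisfies $H^j v = 0$ and $H^{j-1}v\neq 0$, so it generates a cyclic $\mathbb{C}[x]$-submodule isomorphic to $\mathbb{C}[x]/(x^j)$ with generator in $V_i$; a basis of $W_i^j$ produces $b_j^i$ such generators.

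The remaining step is to check that $V$ is the internal direct sum of these cyclic submodules. Linear independence follows by descending induction on $j$: a nontrivial relation among the chosen cyclic submodules would, after applying an appropriate power of $H$, descend to a nontrivial relation inside some $W_i^j$ modulo $(V^{j-1}\cap V_i) + H(V^{j+1}\cap V_{i-1})$, contradicting the defining complement property. Spanning then follows from a telescoping dimension count: summing the identity
$$\dim(V^j\cap V_i) - \dim(V^{j-1}\cap V_i) = b_j^i + \dim H(V^{j+1}\cap V_{i-1})$$
over $j$ and using the injectivity of $H$ on $V^{j+1}/V^j$ recovers $\dim V_i$ as the total contribution of the cyclic pieces to grade $i$.

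The main technical point, and the only real obstacle, is bookkeeping the two indices simultaneously: since $H$ raises the grading $i$ while lowering the Jordan length $j$, the inductive choice of the complements $W_i^j$ must be made in the correct order (decreasing $j$, with the contribution $H(W_{i-1}^{j+1})$ already fixed). This is the standard difficulty of the graded Jordan decomposition, and once the order of choices is set up carefully the verification is routine linear algebra.
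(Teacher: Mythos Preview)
Your approach is correct and genuinely different from the paper's. The paper starts from the ungraded structure theorem $V\cong\bigoplus_j(\mathbb{C}[x]/(x^j))^{\oplus b_j}$ and argues by induction on the maximal block size $d$: it takes the ungraded generators of the top summand, projects them to the graded pieces $V_i/(V_i\cap V^{d-1})$, and lifts back to homogeneous generators. Your argument instead builds the graded decomposition directly, never invoking the ungraded statement: you write down the intrinsic multiplicity formula $b_j^i=\dim\bigl((V^j\cap V_i)/((V^{j-1}\cap V_i)+H(V^{j+1}\cap V_{i-1}))\bigr)$ first and then choose complements $W_i^j$ in decreasing $j$. This is cleaner for uniqueness (your $b_j^i$ are manifestly invariant, whereas the paper only writes the formula at the top level $j=d$) and is the standard way to phrase the graded Jordan form; the paper's route is slightly more hands-on but avoids the double-index bookkeeping you flag at the end.

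One small correction: the displayed identity in your spanning step is not quite right. What the injectivity of $H$ on $V^{j+1}/V^j$ actually gives is
\[
\dim(V^j\cap V_i)-\dim(V^{j-1}\cap V_i)=b_j^i+\bigl[\dim(V^{j+1}\cap V_{i-1})-\dim(V^j\cap V_{i-1})\bigr],
\]
since the image of the graded injection $(V^{j+1}\cap V_{i-1})/(V^j\cap V_{i-1})\hookrightarrow(V^j\cap V_i)/(V^{j-1}\cap V_i)$ is exactly $(H(V^{j+1}\cap V_{i-1})+V^{j-1}\cap V_i)/(V^{j-1}\cap V_i)$ and the cokernel is your $b_j^i$. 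Iterating this recursion in $j$ (rather than summing your stated identity) gives $\dim(V^j\cap V_i)-\dim(V^{j-1}\cap V_i)=\sum_{r\ge0}b_{j+r}^{i-r}$, and then summing over $j$ recovers $\dim V_i$ as the total grade-$i$ contribution of the cyclic pieces. With that fix the argument goes through as you describe.
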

\begin{proof}We will argue by induction on $d$, the largest integer such that $b_d\neq 0$. As such we can assume that for each $j\leq d-1$ the factor $ \mathbb{C}[x]/(x^j) $ is generated by a vector in some $V_i$. Now let $e_1,\ldots ,e_{b_d}$ be a generating set for the factor $\left(\mathbb{C}[x]/(x^d) \right)^{\oplus b_d}$, and define $W : = \textrm{span}\{ e_1,\ldots ,e_{b_d} \}$. We consider the projection operators
\[ p_i : V \to V_i / V_i\cap V^{d-1}\]
and set $W_{i} := p_i (W)$ and $b^i_d = \dim W_{i}$. We claim that $p_0\oplus\cdots \oplus p_{N-1}:W \to W_0\oplus \cdots \oplus W_{N-1}$ is an isomorphism. The map is clearly onto and an injection since any vector in the kernel must lie in $V^{d-1}$. Now consider a lifting of the vector space $V_i \supset W'_i \surj W_i\subset V_i / V_i\cap V^{d-1}$ then
\[ W'_i \oplus H W'_i \oplus \cdots \oplus H^{d-1} W'_i \subset V \]
is a submodule of $V$ isomorphic to $\left( \mathbb{C}[x] / (x^d)\right)^{\oplus b^i_d}$. Summing over all $i$ we have that $\left( \mathbb{C}[x]/(x^d) \right)^{\sum_i b^i_d}$ is a submodule of $V$, hence it follows that $\sum_i \dim W_i = \sum_i  b^i_d \leq b_d = \dim W$ and so for dimension reasons we get 
\[ V\cong \left( \bigoplus_{i=0}^{N-1} \left( \mathbb{C}[x] / (x^d) \right)^{\oplus b^i_d}\right) \bigoplus \left( \bigoplus_{j=1}^{d-1} \left( \mathbb{C}[x]/(x^j) \right)^{\oplus b_j} \right). \] 
Here each factor $\left( \mathbb{C}[x] / (x^d) \right)^{\oplus b^i_d}$ is generated by vectors in $V_i$, so by our inductive hypothesis the entire module is generated by vectors in $V_i$.

Finally we prove the uniqueness statement. Assume we have two distinct such decompositions
\[ V\cong \bigoplus_{i=0}^{N-1}\bigoplus_{j=1}^{d}( \mathbb{C}[x]/(x^j) )^{\oplus b^i_j} 
\cong \bigoplus_{i=0}^{N-1}\bigoplus_{j=1}^{d}( \mathbb{C}[x]/(x^j) )^{\oplus c^i_j} . \] By restricting to subrepresentations if necessary we can assume that $b^i_d \neq c^i_d$ for some $i$. However in this case
\[ b_d^i = \dim\left( \ker(H^d : V_i \to V_{i+d}) / V_i \cap V^{d-1}    \right) = c_d^i \]
is a contradiction. This proves the last part of the lemma.
\end{proof}
Next we organize this data in the way most helpful to our cause.
\begin{defn} Let $0\leq a,b \leq N-1$. We define
\[ |b-a|  = \min \{  r \in \{ 0,1,\ldots,N-1 \} \mid b=a+r \mod N \} . \]
\end{defn}
Intuitively this is the distance from $a$ to $b$ in the cyclic direction $i\to i+1$ corresponding to the map $H$.
\begin{defn}Suppose we have a decomposition of $V$ as a $\mathbb{C}[x]$-module as in Proposition \ref{decomp} . Define $V^{a,b}$ to be the vector subspace corresponding to summand 
\[\bigoplus_{l\geq 1} \left(\mathbb{C}[x]/(x^{N(l-1) + |b-a| +1}) \right)^{b^a_{N(l-1) + |b-a| +1}}.\]
and relabel the integers
\[ b^{a,b}_l :=  b^a_{N(l-1) + |b-a| +1} ,\]
to define partitions
\[ \pi^{[a,b]} := (1^{b_1^{a,b}}2^{b^{a,b}_2} 3^{b^{a,b}_3}\cdots).\] 
\end{defn}
Notice that the above definition depends on the choice of the decomposition in Proposition \ref{decomp}. However all such vector spaces are isomorphism abstractly as $\mathbb{C}[x]$-modules. We can think of these vector spaces as being generated by the nilpotent vectors that start at the $a$th vertex and are annihilated at the $b+1$th vertex under the action of the map $H$. 

The next lemma makes explicit how to recover the dimension vector of a representation from the datum of the $N^2$ partitions $\{\pi^{[a,b]}\mid 0\leq a,b \leq N-1 \}$.
\begin{lem}\label{dimv}Given a representation $r\in R^N(\alpha)$ so that the endomorphism $H$ has type $\{ \pi^{[a,b]}\}$ the dimension vector of the representation $r$ is given by 
\[ \alpha_i = \sum_{a,b} |\pi^{[a,b]}| - \sum_{a,b : i\not\in [a,b]} l(\pi^{[a,b]}) \]
where $|\pi^{[a,b]}|$ and $l(\pi^{[a,b]})$ are the size and length of the partition $\pi^{[a,b]}$. \end{lem}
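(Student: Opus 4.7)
The plan is to reduce the statement to a per-summand bookkeeping argument, using the refined decomposition of $V$ as a $\mathbb{C}[x]$-module provided by Proposition \ref{decomp}. Since dimensions at each vertex are additive over the decomposition $V = \bigoplus_{a,b} V^{a,b}$, and each $V^{a,b}$ further splits into cyclic summands $\mathbb{C}[x]/(x^{N(l-1)+|b-a|+1})$ generated by vectors in $V_a$, it suffices to understand the contribution of one such cyclic summand to each $V_i$, and then sum.

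So, first, I would fix a cyclic summand $M_{a,b,l} := \mathbb{C}[x]/(x^{N(l-1)+|b-a|+1})$ generated by a vector $v \in V_a$, and examine the standard basis $v, Hv, H^2v, \ldots, H^{d-1}v$ where $d = N(l-1)+|b-a|+1$. Because $H$ sends $V_i$ to $V_{i+1}$, the basis vector $H^k v$ lies in $V_{a+k \bmod N}$. The task is then to count, for each residue $i \in \mathbb{Z}/N$, the number of $k \in \{0,1,\ldots,d-1\}$ with $a + k \equiv i \pmod{N}$. Writing $k = k_0 + jN$ with $k_0 \in \{0,\ldots,N-1\}$ the unique representative of $i - a$, the condition $k \le d-1 = N(l-1)+|b-a|$ becomes $j \le l-1 + (|b-a|-k_0)/N$. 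A short case split (according to whether $k_0 \le |b-a|$, i.e.\ $i \in [a,b]$, or $k_0 > |b-a|$, i.e.\ $i \notin [a,b]$) shows that $M_{a,b,l}$ contributes exactly $l$ to $\dim V_i$ when $i \in [a,b]$ and exactly $l-1$ when $i \notin [a,b]$.

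Next I would sum over $l$: the contribution of $V^{a,b}$ to $\dim V_i$ equals $\sum_l l \cdot b_l^{a,b} = |\pi^{[a,b]}|$ if $i \in [a,b]$, and $\sum_l (l-1)\cdot b_l^{a,b} = |\pi^{[a,b]}| - \ell(\pi^{[a,b]})$ otherwise. Finally, summing over all pairs $(a,b)$ and separating the two cases yields
\[
\alpha_i \;=\; \sum_{a,b} |\pi^{[a,b]}| \;-\; \sum_{a,b : i \notin [a,b]} \ell(\pi^{[a,b]}),
\]
which is the claimed formula.

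The only subtle point is the counting argument for a single Jordan chain, and in particular making sure the boundary case $k_0 = |b-a|$ (corresponding to $i = b$) is counted in $[a,b]$ rather than in its complement; this is consistent with the inclusive convention $[a,b]$ used in the definitions of $V^{a,b}$ and $\pi^{[a,b]}$. Everything else is a straightforward bookkeeping once the per-block count is established, so I do not expect any substantive obstacle.
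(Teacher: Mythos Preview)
Your proposal is correct and follows the same approach as the paper's proof: the paper simply asserts the decomposition $V = \bigoplus_{a,b} V^{a,b}$ and states the two values $\dim(V^{a,b}\cap V_i) = |\pi^{[a,b]}|$ or $|\pi^{[a,b]}| - l(\pi^{[a,b]})$ according to whether $i\in[a,b]$ or not, whereas you unpack this claim by explicitly counting basis vectors along each Jordan chain. The extra detail you give (the residue-class count for $k\in\{0,\ldots,d-1\}$ with $a+k\equiv i$) is exactly what justifies the paper's asserted formula for $\dim(V^{a,b}\cap V_i)$, so the two arguments are the same in substance.
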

\begin{proof}This is clear since
\[ V = \bigoplus_{a,b} V^{a,b}  \]
and
\[ \dim \left( V^{a,b} \cap V_i \right) = \left\{ \begin{array}{ll} |\pi^{[a,b]}| & \textrm{ if } i \in [a,b] \\ |\pi^{[a,b]}| - l(\pi^{[a,b]}) & \textrm{ if } i \not\in [a,b] . \end{array} \right.  \]
\end{proof}
We can use this to give a simple reformulation of the term $\chi(\alpha,\alpha) + 2d_C(\alpha)$ appearing in the series $N^\sigma$.
\begin{cor}We have
\[\chi(\alpha,\alpha) + 2d_C(\alpha) = \sum_{i\in I_2} \left( \sum_{ b \neq i } l(\pi^{[i+1,b]}) - \sum_{ c\neq i+1 }l(\pi^{[c,i]})\right)^2 . \]
\end{cor}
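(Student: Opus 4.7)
Proof Plan. The starting point is the identity established already in this section,
\[
\chi(\alpha,\alpha) + 2d_C(\alpha) \;=\; \sum_{i\in I_2}(\alpha_{i+1}-\alpha_i)^2,
\]
so the corollary reduces to showing that for every $i$,
\[
\alpha_{i+1}-\alpha_i \;=\; \sum_{b\neq i} l(\pi^{[i+1,b]}) \;-\; \sum_{c\neq i+1} l(\pi^{[c,i]}).
\]
I plan to derive this identity by inserting the formula of Lemma \ref{dimv} for each of $\alpha_{i+1}$ and $\alpha_i$ and cancelling the common term $\sum_{a,b}|\pi^{[a,b]}|$. What remains is
\[
\alpha_{i+1}-\alpha_i \;=\; \sum_{a,b:\,i\notin[a,b]} l(\pi^{[a,b]}) \;-\; \sum_{a,b:\,i+1\notin[a,b]} l(\pi^{[a,b]}),
\]
so the contribution of a pair $(a,b)$ depends only on the membership of the two consecutive vertices $i,i+1$ in the cyclic interval $[a,b]$.

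The key step is a simple case analysis on $(a,b)$. If both $i$ and $i+1$ lie in $[a,b]$, or both lie outside, the pair contributes zero. The only way to have $i\in[a,b]$ and $i+1\notin[a,b]$ is $b=i$, and then the pair $(a,i)$ contributes $-l(\pi^{[a,i]})$. Dually, $i\notin[a,b]$ and $i+1\in[a,b]$ forces $a=i+1$, and the pair $(i+1,b)$ contributes $+l(\pi^{[i+1,b]})$. The one potentially subtle case is $(a,b)=(i+1,i)$, where $[a,b]$ is the whole cycle $\mathbb{Z}/N$: both $i$ and $i+1$ then belong to $[a,b]$, so this pair falls in the ``both in'' class and contributes zero. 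This is precisely why the two sums in the target identity are restricted to $b\neq i$ and $c\neq i+1$ respectively.

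Combining these contributions yields the desired expression for $\alpha_{i+1}-\alpha_i$, and squaring and summing over $i\in I_2$ gives the stated formula for $\chi(\alpha,\alpha)+2d_C(\alpha)$. The only real ``obstacle'' is being careful with the full-cycle interval $(a,b)=(i+1,i)$; everything else is bookkeeping off Lemma \ref{dimv}.
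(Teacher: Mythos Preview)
Your proposal is correct and follows essentially the same route as the paper: start from the earlier identity $\chi(\alpha,\alpha)+2d_C(\alpha)=\sum_{i\in I_2}(\alpha_{i+1}-\alpha_i)^2$ and then use Lemma~\ref{dimv} to compute $\alpha_{i+1}-\alpha_i$. The paper simply asserts the resulting expression for $\alpha_{i+1}-\alpha_i$ without spelling out the case analysis you give, so your version is a more detailed rendering of the same argument.
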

\begin{proof}In our initial analysis of these terms we saw that
\[ \chi(\alpha,\alpha) + 2d_C(\alpha) = \sum_{i\in I_2} \left( \alpha_{i+1} -\alpha_i \right)^2  \]
and now by lemma \ref{dimv} we have
\[ \alpha_{i+1} - \alpha_i =  \sum_{ b \neq i } l(\pi^{[i+1,b]}) - \sum_{ c\neq i+1 }l(\pi^{[c,i]}). \]
\end{proof} 
The above classification has been for the purpose of breaking the variety $R^N(\alpha)$ down into simpler pieces.
\begin{defn}Given $N^2$ partitions $\{\pi^{[a,b]} \mid 0 \leq a,b \leq N-1\}$ and a dimension vector $\alpha$ as in lemma \ref{dimv} we define
\[ R(\{\pi^{[a,b]}\}) = \{ r\in R^{N}(\alpha) \mid H \textrm{ has type } \{\pi^{[a,b]}\} \}. \]
\end{defn}
This provides a stratification of  $R^{N}(\alpha)$ into strata where the normal form of $H$ has a fixed type. We will proceed to compute the motivic classes of each of these stratum. 

A representation in $R(\{\pi^{[a,b]}\})$ is given explicitly by a vector space $V = \bigoplus_{i\in I_1 \cup I_2 \cup I_3} V_i$ and a collection of linear maps corresponding to the arrows $r_i$ with $i\in I_1$ ,$h^-_{i+1/2}$ with $i\in I_2$ and $h^+_{i+1/2}$ with $i\in I_1\cup I_2 \cup I_3$. In addition the linear maps satisfy relations
\[\begin{array}{llll}
r_i h^+_{i-1/2} &=& h_{i-1/2}^+r_{i-1} &\textrm{ for } i\in [1,N'-1]\cap I_1 \\
r_0h^+_{N-1/2} &=&h^+_{N-1/2} h^+_{N-3/2}  h^-_{N-3/2} &\\
h^-_{N'+1/2} h^+_{N'+1/2}h^+_{N'-1/2}&=& h^+_{N'-1/2}r_{N'-1} & \\
h^-_{i+3/2} h^+_{i+3/2}h^+_{i+1/2}&=&h^+_{i+1/2} h^+_{i-1/2}  h^-_{i-1/2}& \textrm{ for } i=[N'+1,N-3 ]\cap I_3 .
\end{array}\]
and we require that the map
\[ H = h^{+}_{1/2} + h^{+}_{3/2} + \cdots h^{+}_{N-1/2} \in \bigoplus_{i\in I_1\cup I_2\cup I_3} \Hom (V_i,V_{i+1}) \]
has a type given by the partitions $\{ \pi^{[a,b]} \mid 0\leq a,b \leq N-1 \}$. The linear map $H$ contains all the information of the maps $h^+_{i+1/2}$. For brevity we make the following definition packaging all the remaining linear maps into one.
\begin{defn}Given a representation as above, we define the linear map
\[ L : = r_0 + r_1 + \cdots r_{N'-1} + h^-_{N'+1/2} + \cdots h^-_{N-3/2} \in \bigoplus_{i\in I_1} \Hom(V_i,V_i) \bigoplus_{i\in I_2} \Hom(V_{i+1},V_i). \]
\end{defn}
From now on in order to compute the motivic class of $R(\{ \pi^{[a,b]} \})$ we will work with a choice of coordinates. Let 
\[ v^{a,b}_l(k) \in V_a \]
be such that $v^{a,b}_l(k)$ generates the $k$th summand of $\mathbb{C}[x]/(x^{N(l-1)+ |b-a| +1})^{\oplus b^{a,b}_l}$ in the decomposition of Proposition \ref{decomp}. Then we have that
\[ \mathcal{B } : =  \{ H^{p}v^{a,b}_l(k) \mid  1\leq k \leq b^{a,b}_l,  0\leq a,b \leq N-1, 0 \leq p \leq N(l-1)+ |b-a| +1 \} \]
forms a basis of $V$.
\begin{defn}We define $H( \pi^{[a,b]})$ to be the matrix representation of the map $H$ with respect to the basis $\mathcal{B}$. Also define
\begin{eqnarray*}
F(\{\pi^{[a,b]}\}) &:=& \{ L \mid (L,H(\pi^{[a,b]})) \in R(\{ \pi^{[a,b]} \}) \} \\
N(\{\pi^{[a,b]}\}) &:=& \{ H \mid H \textrm{ has type } \{ \pi^{[a,b]} \}\}.
\end{eqnarray*}
\end{defn}
Then $ R(\{ \pi^{[a,b]} \})$ has a decomposition as a vector bundle.
\begin{lem}\label{Risvbun}$R(\{ \pi^{[a,b]} \})$ has the structure of a vector bundle
\[ \begin{array}{ccc} 
F(\{ \pi^{[a,b]} \}) & \to & R(\{ \pi^{[a,b]} \}) \\
 & & \downarrow \\
 & & N(\{ \pi^{[a,b]} \}).
\end{array} \]
In particular we have that 
\[ [R(\{ \pi^{[a,b]} \})] = [F(\{ \pi^{[a,b]} \})] \cdot [N(\{ \pi^{[a,b]} \})] \]
in the Grothendieck ring of varieties.
\end{lem}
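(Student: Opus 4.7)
The plan is to realize the projection $\pi\colon R(\{\pi^{[a,b]}\})\to N(\{\pi^{[a,b]}\})$, $(L,H)\mapsto H$, as a Zariski locally trivial fibration whose fiber is the affine space $F(\{\pi^{[a,b]}\})$. Once this is established, the identity $[R(\{\pi^{[a,b]}\})]=[F(\{\pi^{[a,b]}\})]\cdot[N(\{\pi^{[a,b]}\})]$ in the Grothendieck ring is immediate.

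First I would identify the fibers. Holding $H$ fixed, the potential relations of $J_{\sigma,C}$ listed above are linear in the components $r_i$ and $h^-_{i+1/2}$ of $L$ (they are bilinear in $(L,H)$). Hence each fiber $\pi^{-1}(H)$ is an affine subspace of $\bigoplus_{i\in I_1}\mathrm{End}(V_i)\oplus\bigoplus_{i\in I_2}\mathrm{Hom}(V_{i+1},V_i)$. The group $G_\alpha$ acts on $R^N(\alpha)$ by simultaneous conjugation, and this action preserves $R(\{\pi^{[a,b]}\})$ because the type $\{\pi^{[a,b]}\}$ is a conjugation invariant. By equivariance, the fiber over $gH(\pi^{[a,b]})g^{-1}$ equals $gF(\{\pi^{[a,b]}\})g^{-1}$, hence is linearly isomorphic to $F(\{\pi^{[a,b]}\})$.

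Next I would argue that $N(\{\pi^{[a,b]}\})$ is a single $G_\alpha$-orbit. By the refined Jordan decomposition in Proposition~\ref{decomp}, any two graded nilpotent endomorphisms with the same partitions $\{\pi^{[a,b]}\}$ are intertwined by a graded linear isomorphism, that is, by an element of $G_\alpha$. Thus $N(\{\pi^{[a,b]}\})\cong G_\alpha/S$ where $S$ is the stabilizer of the normal form $H(\pi^{[a,b]})$, and $R(\{\pi^{[a,b]}\})\cong G_\alpha\times^{S}F(\{\pi^{[a,b]}\})$ with $S$ acting on $F(\{\pi^{[a,b]}\})$ by conjugation.

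Finally I would establish Zariski local triviality by producing explicit local sections $s\colon U\to G_\alpha$ with $s(H)\cdot H\cdot s(H)^{-1}=H(\pi^{[a,b]})$ on a Zariski open cover of $N(\{\pi^{[a,b]}\})$. Algorithmically, one inducts on the size of the parts of the partitions as in the proof of Proposition~\ref{decomp}: at each stage one selects generators by projecting onto a fixed complement, which can be done algebraically on a Zariski open locus. Over such a $U$ the map $(H,L)\mapsto\bigl(s(H)^{-1}Ls(H),H\bigr)$ trivializes $\pi^{-1}(U)\cong U\times F(\{\pi^{[a,b]}\})$. The expected main obstacle is precisely this last step: writing the basis-straightening procedure algebraically so as to conclude Zariski (rather than merely \'etale) local triviality. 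An acceptable shortcut is to observe that $G_\alpha$ is a product of general linear groups and hence special, so that every $G_\alpha$-equivariant bundle becomes trivial on a Zariski open cover of its $G_\alpha$-orbit base.
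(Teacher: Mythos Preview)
Your argument is correct, but considerably more elaborate than the paper's. The paper simply observes that the projection $(L,H)\mapsto H$ has a zero section $H\mapsto(0,H)$ and that each fibre is the linear space of all admissible $L$; since the superpotential relations are linear in $L$ for fixed $H$, this already exhibits $R(\{\pi^{[a,b]}\})$ as a family of linear subspaces of a fixed ambient space over $N(\{\pi^{[a,b]}\})$. Constancy of the fibre dimension (which you deduce from $G_\alpha$-equivariance) then makes it a sub-vector-bundle of a trivial bundle, hence Zariski locally trivial, without any need to produce explicit local sections or invoke the orbit description $G_\alpha\times^S F$.

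One small imprecision in your ``shortcut'': specialness of $G_\alpha$ is not what trivializes the relevant torsor. To split $G_\alpha\to G_\alpha/S$ Zariski-locally you need the \emph{stabilizer} $S$ to be special, not $G_\alpha$. The paper establishes exactly this in the very next lemma, by identifying $S'(\{\pi^{[a,b]}\})$ as the group of units of an algebra. For the present lemma, however, that machinery is unnecessary: the constant-rank-kernel argument already gives the vector bundle structure and hence the motivic product formula.
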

\begin{proof} The projection map
\[
\begin{array}{ccccc} 
  p    & : & R(\{ \pi^{[a,b]} \}) & \to & N(\{ \pi^{[a,b]} \}) \\
              & : & (L,H) & \mapsto & H .
 \end{array}
 \]
 defines the bundle structure with zero section $H \mapsto (0,H)$. The fibre is the linear space of all such $L$.
\end{proof}
Here the base of the vector bundle is the space of all matrices of type $\{ \pi^{[a,b]} \}$ these are all conjugate to $H(\pi^{[a,b]})$, therefore we have a torsor,
\[
\begin{array}{ccccc} 
\pi &:& G_{\alpha}& \to & N(\pi^{[a,b]})\\
     &:& P                 & \mapsto & P H(\pi^{[a,b]}) P^{-1}.
      \end{array} 
 \]
 This is a torsor for the group $S'(\{\pi^{[a,b]}\}) := \textrm{Stab}_{G_\alpha}(H(\pi^{a,b}))$. This group is given as the group of units in an algebra.
 \begin{defn} We identify $S'(\{\pi^{[a,b]}\})$ with the group of multiplicative units in the following algebra
 \[ S(\{\pi^{[a,b]}\}) := \left\{ N \in \prod_{i=0}^{N-1}\End(\alpha_i) \Big|\ NH(\pi^{[a,b]}) = H(\pi^{[a,b]})N \right\}. \]
 \end{defn}
 Since $S'(\{\pi^{[a,b]}\})$ is the group of units of an algebra it is a special group and so the above torsor splits in the Zariski topology.  The next lemma gives a formula of the motivic class of the group $S'(\{\pi^{[a,b,]}\})$ and via the splitting of the above torsor we deduce a formula for the class of $N(\{ \pi^{[a,b]} \})$. Before stating the lemma we create some notation.
 \begin{defn}The following linear spaces have dimension
 \begin{eqnarray*} 
 T(\{\pi^{[a,b]}\}) &:=& \dim F(\{ \pi^{[a,b]} \}) \\
 B(\{\pi^{[a,b]}\}) &:=& \dim S(\{ \pi^{[a,b]} \}) .
 \end{eqnarray*} 
 \end{defn}
 \begin{lem}\label{ReqTmB}We have
 \[ [S'(\{\pi^{[a,b]}\})] = [S(\{\pi^{[a,b]}\})] \cdot \prod_{0\leq a,b \leq N-1}  \frac{1}{f\left(\pi^{[a,b]}\right)}\]
 where 
 \[f\left(\pi^{[a,b]}\right) := \prod_{l\geq 1} \frac{[\End(b_l^{a,b})]}{[\GL(b^{a,b}_l)]}. \]
So as a consequence
\[ [R(\{ \pi^{a,b} \})] =  [G_{\alpha}] \cdot  \mathbb{L}^{ T(\{\pi^{[a,b]}\}) - B(\{\pi^{[a,b]}\}) } \cdot \prod_{0\leq a,b \leq N-1} f\left(\pi^{[a,b]}\right) .\]
 \end{lem}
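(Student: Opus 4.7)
The plan is to analyze $S(\{\pi^{[a,b]}\})$ as the endomorphism algebra of $V$ viewed as a graded module over $\mathbb{C}[x]$, where the grading is the $\mathbb{Z}/N\mathbb{Z}$-grading $V = \bigoplus V_i$, $\deg(x) = 1$, and $x$ acts as $H(\pi^{[a,b]})$. By Proposition \ref{decomp} we have $V = \bigoplus_{(a,b,l)} M_{a,b,l}^{\oplus b_l^{a,b}}$, where $M_{a,b,l}$ denotes the cyclic graded module with generator in degree $a$ and annihilator $(x^{N(l-1)+|b-a|+1})$. The key structural observation, which I would check carefully, is that the $M_{a,b,l}$ are pairwise non-isomorphic as graded modules: the degree of the generator together with the length of the annihilator is a complete invariant of such a cyclic graded module.

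Next I would invoke the Wedderburn--Artin structure theorem for the finite-dimensional algebra $S$. Since $V$ decomposes into pairwise non-isomorphic indecomposables with the indicated multiplicities, the Jacobson radical $J(S)$ is nilpotent and $S/J(S) \cong \prod_{(a,b,l)} \mathrm{Mat}(b_l^{a,b})$. The group of units $S'$ is precisely the preimage of $(S/J(S))^\times = \prod_{(a,b,l)} \GL(b_l^{a,b})$ under the natural map $S \twoheadrightarrow S/J(S)$, and this preimage is a Zariski-locally trivial $(1+J(S))$-torsor over the product. Since $1+J(S)$ is unipotent, hence special, and isomorphic as a variety to $\mathbb{A}^{\dim J(S)}$, passing to motives yields
\[ [S'] = \mathbb{L}^{\dim J(S)} \cdot \prod_{(a,b,l)} [\GL(b_l^{a,b})]. \]
Since also $[S] = \mathbb{L}^{\dim S} = \mathbb{L}^{\dim J(S)} \cdot \prod_{(a,b,l)} \mathbb{L}^{(b_l^{a,b})^2} = \mathbb{L}^{\dim J(S)} \cdot \prod_{(a,b,l)} [\End(b_l^{a,b})]$, dividing one formula by the other produces precisely $[S'] = [S] \cdot \prod_{0 \leq a,b \leq N-1} 1/f(\pi^{[a,b]})$.

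For the consequence, I would combine Lemma \ref{Risvbun}, which gives $[R(\{\pi^{[a,b]}\})] = [F(\{\pi^{[a,b]}\})] \cdot [N(\{\pi^{[a,b]}\})] = \mathbb{L}^{T(\{\pi^{[a,b]}\})} \cdot [N(\{\pi^{[a,b]}\})]$, with the $S'$-torsor $G_\alpha \to N(\{\pi^{[a,b]}\})$ described just before the statement of the lemma. Since $S'$ is the group of units of an algebra it is special, so this torsor is Zariski locally trivial, giving $[N(\{\pi^{[a,b]}\})] = [G_\alpha]/[S'(\{\pi^{[a,b]}\})]$. Substituting the formula for $[S']$ derived above, together with $[S] = \mathbb{L}^{B(\{\pi^{[a,b]}\})}$, collapses to the claimed formula for $[R(\{\pi^{[a,b]}\})]$.

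The main obstacle is verifying the Wedderburn decomposition in the graded setting---specifically, confirming that all Hom spaces between non-isomorphic $M_{a,b,l}$'s lie entirely in the Jacobson radical of $S$ and that the semisimple quotient picks up exactly the multiplicities $b_l^{a,b}$. Once this structural input is in hand, the rest of the argument is formal: the unipotent radical contribution $\mathbb{L}^{\dim J(S)}$ appears identically on both sides of the comparison $[S']/[S]$ and cancels, leaving only the $[\GL(b_l^{a,b})]/[\End(b_l^{a,b})]$ factors that make up $1/f(\pi^{[a,b]})$.
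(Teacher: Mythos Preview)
Your proposal is correct and follows essentially the same approach as the paper. The paper makes the argument concrete by choosing the basis $\mathcal{B}$ and defining an explicit algebra homomorphism $\pi\colon S(\{\pi^{[a,b]}\})\to\prod_{a,b,l}\End(W^{a,b}_l)$ via restriction-and-projection to the spans $W^{a,b}_l=\operatorname{span}\{v^{a,b}_l(k)\}$; this $\pi$ is exactly your quotient by the Jacobson radical, and the paper's ``trivial vector bundle with fibre $\ker\pi$'' is your affine-space fibre $1+J(S)\cong\mathbb{A}^{\dim J(S)}$. The derivation of the consequence from Lemma~\ref{Risvbun} and the $S'$-torsor structure on $G_\alpha\to N(\{\pi^{[a,b]}\})$ is identical in both.
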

 \begin{proof} Let 
 \[ W^{a,b}_l := \textrm{span}_{\mathbb{C}}\{v^{a,b}_l(k)\mid 1\leq k\leq b^{a,b}_l\} \]
 be the span of the basis elements $v^{a,b}_l(k)$ for $1\leq k\leq b^{a,b}_l$. We have both inclusion and projection
 \[ W^{a,b}_l \inj V \surj W^{a,b}_l. \] 
 This gives a map of algebras
 \begin{eqnarray*} 
 \pi &:& S(\{\pi^{[a,b]}\}) \to \prod_{a,b,l} \End(W^{a,b}_l) \\
   &:& N \mapsto \oplus_{a,b,l} N|_{W^{a,b}_l}.
 \end{eqnarray*} 
 This splits as a trivial vector bundle, whose rank is the dimension of the total space minus the dimension of the base. Since we have that the group $S'(\{\pi^{[a,b]}\})$ is the group of units in $S(\{\pi^{[a,b]}\})$, we can identify $S'(\{\pi^{[a,b]}\})$ as the inverse image of the units on the right hand side. This is a trivial vector bundle of rank equal to $ \dim S(\{\pi^{[a,b]}\}) - \dim \prod_{a,b,l}\End (W^{a,b}_l)$. We have an isomorphism of varieties
 
 \[ S'(\{\pi^{[a,b]}\}) \equiv  \frac{ S(\{\pi^{[a,b]}\}) }{ \prod_{a,b,l} \End (W^{a,b}_l)} \times  \prod_{a,b,l} \GL(W^{a,b}_l) \]
 so motivically we have
 \[ [S'(\{\pi^{[a,b]}\})] = [S(\{\pi^{[a,b]}\})] \cdot \prod_{0\leq a,b \leq N-1}  \frac{1}{f\left(\pi^{[a,b]}\right)}.\]
 In lemma \ref{Risvbun} we saw that 
 \[ [R(\{\pi^{[a,b]}\})] = [F(\{\pi^{[a,b]}\})] \cdot [N(\{\pi^{[a,b]}\})] \]
 Now we know that $N(\{\pi^{[a,b]}\})$ is a torsor for the group $S'(\{\pi^{[a,b]}\})$ whose motive we have just computed we deduce
 \begin{eqnarray*}
 [R(\{\pi^{[a,b]}\})] &=& [F(\{\pi^{[a,b]}\})] \cdot \frac{[G_\alpha]}{[S'(\{\pi^{[a,b]}\})]} \\
    &=& [F(\{\pi^{[a,b]}\})] \cdot \frac{[G_\alpha]}{[S(\{\pi^{[a,b]}\})]}  \cdot \prod_{0\leq a,b \leq N-1} f\left( \pi^{[a,b]} \right) \\
     &=& [G_\alpha] \cdot \mathbb{L}^{T(\{\pi^{[a,b]}\}) -B(\{\pi^{[a,b]}\})}  \cdot \prod_{0\leq a,b \leq N-1} f\left( \pi^{[a,b]} \right).
 \end{eqnarray*}
 \end{proof}

 The next proposition computes the difference $ T(\{\pi^{[a,b]}\}) - B(\{\pi^{[a,b]}\})$. Its proof is found in the appendix.
 \begin{prop}\label{difference}We have $ T(\{\pi^{[a,b]}\}) - B(\{\pi^{[a,b]}\}) $ equals to
\[-\frac{1}{2}\sum_{i\in I_2} \left( \sum_{ b \neq i } l(\pi^{[i+1,b]}) - \sum_{ c\neq i+1 }l(\pi^{[c,i]}) \right)^2 
- \half \sum_{a\in I_3 , b\not\in I_2} \sum_{i\geq 1}(b_i^{a,b})^2 - \half\sum_{a\not\in I_3 , b\in I_2}  \sum_{i\geq 1} (b_i^{a,b})^2. 
\]
 \end{prop}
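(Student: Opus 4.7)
The plan is to compute both $T(\{\pi^{[a,b]}\})$ and $B(\{\pi^{[a,b]}\})$ as explicit dimension counts in the basis $\mathcal{B}$, then compare them pairwise across the summands of the decomposition $V = \bigoplus V^{a,b,l,k}$ from Proposition \ref{decomp}. Since each $V^{a,b,l,k}$ is a cyclic $\mathbb{C}[H]$-module of length $N(l-1)+|b-a|+1$ with generator $v^{a,b}_l(k) \in V_a$, both $S(\{\pi^{[a,b]}\})$ and $F(\{\pi^{[a,b]}\})$ split as direct sums of Hom spaces indexed by ordered pairs of such indecomposables, and we can reduce the entire computation to a combinatorial sum.

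For $B$: an element $N \in S(\{\pi^{[a,b]}\})$ satisfies $NH = HN$ and respects the vertex grading, so it is determined by the images $N v^{a,b}_l(k) \in V_a$ subject to $H^{N(l-1)+|b-a|+1}\,N v^{a,b}_l(k) = 0$. Expanding $N v^{a,b}_l(k)$ in the basis $\mathcal{B}$, the contribution of the pair $(V^{a,b,l,k},V^{a',b',l',k'})$ to $B$ is the number of powers $H^p$ sending the generator of the first into the second while respecting both the grading ($a' + p \equiv a \pmod N$) and the nilpotency bound. This gives a closed combinatorial formula for $B$ in terms of the $b^{a,b}_l$'s.

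For $T$: a tuple $L = (r_i, h^-_{i+1/2})$ is similarly determined by its values on the generators $v^{a,b}_l(k)$ whose vertex label $a$ matches $i$ (for the $r_i$) or $i+1$ (for the $h^-_{i+1/2}$). The potential relations listed in \S\ref{sec_NCCR}, once specialized to $H = H(\pi^{[a,b]})$, become $\mathbb{C}[H]$-linearity conditions on how $r_i$ and $h^-_{i+1/2}$ propagate: each $r_i$ behaves like a grading-preserving $\mathbb{C}[H]$-endomorphism on its "component", and each $h^-_{i+1/2}$ acts as a degree-shifting $\mathbb{C}[H]$-morphism. Counting the resulting free parameters yields $T$ again as a sum over ordered pairs of indecomposables, with an analogous but shifted nilpotency/grading condition reflecting the fact that $r_i$ lives at vertices in $I_1$ and $h^-_{i+1/2}$ connects vertices in $I_2$ and $I_3$.

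Taking the difference term by term, most pairwise contributions either cancel or assemble into $-\tfrac{1}{2}(\alpha_{i+1}-\alpha_i)^2$ via the identity $\alpha_{i+1}-\alpha_i = \sum_{b\neq i} l(\pi^{[i+1,b]}) - \sum_{c\neq i+1} l(\pi^{[c,i]})$ from Lemma \ref{dimv}, recovering the first summand of the desired formula. The residual "boundary" pairs, in which one summand is generated at a vertex $a \in I_3$ terminating outside $I_2$ or vice versa, produce the leftover $-\tfrac{1}{2}\sum (b^{a,b}_i)^2$ terms. The main obstacle is the bookkeeping: one has to track carefully which cases of $a,b$ relative to $I_1, I_2, I_3$ lead to matching contributions in $T$ and $B$ and which leave a deficit, and verify that every residual pair is accounted for exactly once in the correction terms.
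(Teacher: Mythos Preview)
Your approach is essentially identical to the paper's: decompose $F$ and $S$ into blocks indexed by pairs of indecomposable $\mathbb{C}[H]$-summands, determine each block by the image of the cyclic generator subject to a nilpotency/grading constraint, and then compare pairwise. The paper organizes this via a quadratic form $M(\pi,\pi')=\sum_{i\ge 1}(\sum_{j\ge i}b_j)(\sum_{j\ge i}c_j)$ and an operation $\pi\mapsto\pi'$ that strips one box from each column, so that every $T(\pi^{[a,b]},\pi^{[c,d]})$ and $B(\pi^{[a,b]},\pi^{[c,d]})$ becomes one of $M(\pi,\pi')$, $M(\pi',\pi)$, etc.; the final verification is a nine-way case split on which of $I_1,I_2,I_3$ contain $r$ and $s$ in $\pi^{[r,s]}$.

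One subtlety your outline glosses over: for a generator $v^{a,b}_l(k)$ with $a\in I_2$, no component of $L$ has source $V_a$, so $L$ is \emph{not} determined by its value there; instead the correct datum is $L(Hv^{a,b}_l(k))\in V_a$, and the nilpotency exponent shifts accordingly. The paper treats this in a separate lemma. If you fold this in, the rest of your plan goes through, but be warned that the ``most pairwise contributions cancel'' step is not as clean as your sketch suggests: the paper finds seven distinct patterns of non-cancellation in $T-B$ and then checks against the target formula in nine cases according to the $I_j$-types of the endpoints. The bookkeeping is the entire content of the proof.
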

 \begin{proof}The proof is a linear algebra calculation. See appendix. 
 \end{proof}
 As a corollary we deduce the formula for $N^\sigma(y)$.
 \begin{prop} Let 
 \[\begin{array}{lll} S &=& \{ [a,b] \mid a\in I_3 , b\not \in I_2 \textrm{ or } a\not \in I_3, b\in I_2 \} ,\\
y_{[a,b]} &=& y_a\cdot y_{a+1} \cdots y_{b}, \\
y' &=&  y_0\cdot y_1\cdots y_{N-1}, \end{array}
 \]
then we have 
\[N^\sigma(y) = \Exp \left( \frac{\mathbb{L}}{\mathbb{L}-1}\frac{1}{1-y'}\left(
\sum_{[a,b] \not\in S} y _{[a,b]} -\mathbb{L}^{-\half}\sum_{[a,b] \in S} y_{[a,b]}\right) \right).\]
 \end{prop}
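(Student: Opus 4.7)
The strategy is to stratify $R^N(\alpha)$ by the Jordan type of the nilpotent endomorphism $H$, encoded by the $N^2$ partitions $\{\pi^{[a,b]}\}$, and to recognize the resulting sum as a product of elementary Euler $q$-series indexed by triples $(a,b,l)$. For a fixed Jordan type, combining Lemma \ref{ReqTmB} with Proposition \ref{difference} gives
\[
\frac{[R(\{\pi^{[a,b]}\})]}{[G_\alpha]} = \mathbb{L}^{-\frac{1}{2}\sum_{i\in I_2}(\alpha_{i+1}-\alpha_i)^2}\cdot \mathbb{L}^{-\frac{1}{2}\sum_{[a,b]\in S}\sum_l(b_l^{a,b})^2}\cdot \prod_{a,b,l}\frac{\mathbb{L}^{(b_l^{a,b})^2}}{[\GL(b_l^{a,b})]}.
\]

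Multiplying by the prefactor $(-\mathbb{L}^{\half})^{\sum_{i\in I_2}(\alpha_{i+1}-\alpha_i)^2}$ from the definition of $N^\sigma$ cancels the first $\mathbb{L}$-factor above, leaving only a sign: using $(-\mathbb{L}^{\half})^k\mathbb{L}^{-k/2}=(-1)^k$ and $(-1)^{n^2}=(-1)^n$, the residual sign is $(-1)^{\sum_{i\in I_2}(\alpha_{i+1}-\alpha_i)}$. Substituting the expression from Lemma \ref{dimv} for $\alpha_{i+1}-\alpha_i$ and performing a case analysis based on whether the endpoints $a,b$ lie in $I_1$, $I_2$, or $I_3$, exploiting the basic fact that $i\in I_2$ is equivalent to $i+1\in I_3$, one checks that each partition $\pi^{[a,b]}$ contributes to this exponent with coefficient $+l(\pi^{[a,b]})$ if $a\in I_3$ and $b\notin I_2$, with coefficient $-l(\pi^{[a,b]})$ if $a\notin I_3$ and $b\in I_2$, and with coefficient $0$ otherwise. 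Hence the sign factors as $\prod_{[a,b]\in S}(-1)^{l(\pi^{[a,b]})}$, which is multiplicative in $(a,b,l)$.

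The monomial $y^\alpha$ also factors: using $|\pi|-l(\pi)=\sum_l(l-1)b_l$ in Lemma \ref{dimv} yields $y^\alpha=\prod_{a,b,l}\bigl((y')^{l-1}y_{[a,b]}\bigr)^{b_l^{a,b}}$. All ingredients are now multiplicative over $(a,b,l)$, so $N^\sigma(y)$ decouples into a product of one-variable series in $x=(y')^{l-1}y_{[a,b]}$, namely $\sum_b \mathbb{L}^{b^2}x^b/[\GL(b)]$ when $[a,b]\notin S$ and $\sum_b(-1)^b\mathbb{L}^{b^2/2}x^b/[\GL(b)]$ when $[a,b]\in S$. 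Using $\mathbb{L}^{n^2}/[\GL(n)]=1/\prod_{k=1}^n(1-\mathbb{L}^{-k})$, both series are evaluated by the two Euler $q$-series identities as $\prod_{k\ge 0}(1-\mathbb{L}^{-k}x)^{-1}=\Exp\bigl(\frac{\mathbb{L}}{\mathbb{L}-1}x\bigr)$ and $\prod_{k\ge 0}(1-\mathbb{L}^{-k-\half}x)=\Exp\bigl(-\mathbb{L}^{-\half}\frac{\mathbb{L}}{\mathbb{L}-1}x\bigr)$, respectively. Summing the geometric progression $\sum_{l\ge 1}(y')^{l-1}y_{[a,b]}=y_{[a,b]}/(1-y')$ inside the $\Exp$ and taking the product over all $[a,b]$ yields the claimed formula.

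The principal obstacle is the sign analysis of the second paragraph: the cancellation between the prefactor and the $\mathbb{L}$-power coming from Proposition \ref{difference} is automatic, but showing that the residual sign distributes cleanly as $\prod_{[a,b]\in S}(-1)^{l(\pi^{[a,b]})}$ requires careful bookkeeping across the three regions $I_1$, $I_2$, $I_3$, together with the cyclic wraparound at $0$. Once this factorization is in hand, the remainder of the proof is a routine application of Euler's $q$-series identities.
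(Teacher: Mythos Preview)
Your proof is correct and follows the same route as the paper: stratify by Jordan type, combine Lemma~\ref{ReqTmB} with Proposition~\ref{difference}, factor multiplicatively over $[a,b]$, and evaluate via Euler's $q$-series identities (the paper packages these as the partition generating series $f(t,a)$ and $g(t,a)$ cited from \cite{mac}, which is the same computation organized one level up). One point where you are actually more careful than the paper: the sign analysis in your second paragraph is exactly what is needed to pass from $(-\mathbb{L}^{1/2})^{\chi+2d_C}\cdot\mathbb{L}^{T-B}$ to $\prod_{[a,b]\in S}\prod_l(-\mathbb{L}^{1/2})^{-(b_l^{a,b})^2}$, and the paper leaves this parity check implicit.
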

 \begin{proof}Recall our initial definition of $N^\sigma(y)$
\[ N^\sigma(y) = \sum_{\alpha \in \mathbb{N}^{Q_0}  } (-\mathbb{L}^{1/2})^{\chi(\alpha, \alpha) + 2d_C(\alpha) } \frac{[R^N(\alpha)]}{[G_\alpha]} y^\alpha \]
In Proposition \ref{decomp} we saw that it was possible to stratify each of the varieties $R^N(\alpha)$ by the type $\{\pi^{[a,b]} \}$ of the cycle $H$. This gives
\[ N^\sigma(y) = \sum_{\alpha \in \mathbb{N}^{Q_0}  } (-\mathbb{L}^{1/2})^{\chi(\alpha, \alpha) + 2d_C(\alpha) }[G_\alpha]^{-1}\left( \sum_{\{ \pi^{[a,b]} \} \vdash \alpha} [R(\{\pi^{[a,b]}\})] \right)  y^\alpha .\]
The motivic class of $R(\{\pi^{[a,b]}\})$ was computed in lemma \ref{ReqTmB} substituting this into the above formula gives
\[  \sum_{\alpha \in \mathbb{N}^{Q_0}  } (-\mathbb{L}^{1/2})^{\chi(\alpha, \alpha) + 2d_C(\alpha) }\left( \sum_{\{ \pi^{[a,b]} \} \vdash \alpha} \mathbb{L}^{T(\{ \pi^{[a,b]} \}) - B(\{ \pi^{[a,b]} \})} \cdot \prod_{0\leq a,b \leq N-1} f\left( \pi^{[a,b]} \right) \right)  y^\alpha . \]
Lemma \ref{dimv} showed how the dimension vector depended on the partitions we had
\[ \alpha_i = \sum_{0\leq a,b \leq N-1}|\pi^{[a,b]}| - \sum_{[a,b]\not\ni i} l(\pi^{[a,b]}) \]
and an immediate corollary was that
\[ \chi(\alpha, \alpha) + 2d_C(\alpha) = \sum_{i\in I_2} \left( \sum_{ b \neq i } l(\pi^{[i+1,b]}) - \sum_{ c\neq i+1 }l(\pi^{[c,i]}) \right)^2. \]
Combining this with the formula for the difference $T(\{ \pi^{[a,b]} \}) - B(\{ \pi^{[a,b]} \})$ (Proposition \ref{difference}) gives
\begin{eqnarray*}
 N^\sigma(y) &=& \sum_{\{ \pi^{[a,b]} \}}\left( \prod_{[a,b]\not\in S} f\left( \pi^{[a,b]} \right) \right) \cdot \left( \prod_{[a,b]\in S} f\left( \pi^{[a,b]}\right) \prod_{l\geq 1} (-\mathbb{L}^{\half})^{-(b_l^{a,b})^2} \right) \\
& &
\hspace{ 2.0cm} \cdot \prod_{i=0}^{N-1} y_i^{\sum_{0\leq a,b \leq N-1}|\pi^{[a,b]}| - \sum_{[a,b]\not\ni i} l(\pi^{[a,b]})} . \end{eqnarray*}
To simplify notation set
\[\begin{array}{lllll} g\left(\pi\right) &:=& f\left( \pi \right) \cdot \prod_{l\geq 1} (-\mathbb{L}^{\half})^{-b_l^2} &\textrm{ for }& \pi = (1^{b_1}2^{b_2}3^{b_3}\cdots) \\
 &&  \end{array}\]
then rearranging the products and summations gives
\begin{eqnarray*}
 N^\sigma(y) &=& \prod_{[a,b]\not\in S} \sum_{\pi^{[a,b]}} f\left( \pi^{[a,b]} \right) \cdot y'^{\left|\pi^{[a,b]}\right|-l\left(\pi^{[a,b]}\right)} \cdot y_{[a,b]}^{l(\pi^{[a,b]})} \\ 
 & & \cdot \prod_{[a,b]\in S} \sum_{\pi^{[a,b]}} g\left(\pi^{[a,b]}\right) \cdot  y'^{\left|\pi^{[a,b]}\right|-l\left(\pi^{[a,b]}\right)}\cdot y_{[a,b]}^{l(\pi^{[a,b]})}. 
\end{eqnarray*}
 Both of these series are know to have product expansions \cite{mac}
\[ \begin{array}{lllll}
 f(t,a) &=& \sum_{\pi} f\left(\pi\right)a^{l(\pi)} t^{|\pi|-l(\pi)} &=& \Exp\left( \frac{1}{1-\mathbb{L}^{-1}}\cdot \frac{a}{1-t} \right) \\
 g(t,a) &=& \sum_{\pi} g\left(\pi\right)a^{l(\pi)} t^{|\pi|-l(\pi)} &=&  \Exp\left( \frac{(-\mathbb{L}^{\half})^{-1}}{1-\mathbb{L}^{-1}} \cdot \frac{a}{1-t} \right).
\end{array}\]
Now $N^\sigma$ is a product of such series and multiplying together the corresponding exponential generating series gives the desired result
\[N^\sigma(y) = \Exp \left( \frac{\mathbb{L}}{\mathbb{L}-1}\frac{1}{1-y'}\left(
\sum_{[a,b] \not\in S} y _{[a,b]} -\mathbb{L}^{-\half}\sum_{[a,b] \in S} y_{[a,b]}\right) \right).\]
\end{proof}
 Now we have computed $I^\sigma$, $N^\sigma$ and so by lemma \ref{factor}
\[ A^\sigma_U(y) = \Exp\left(  \mathbb{L} \frac{y'}{1-y'} + \frac{\mathbb{L}}{\mathbb{L}-1}\frac{1}{1-y'}\left(
\sum_{[a,b] \not\in S} y _{[a,b]} -\mathbb{L}^{-1/2}\sum_{[a,b] \in S} y_{[a,b]}\right) \right) \]

or to reformulate this as a product over the set of roots
\[
\Exp\left( \frac{ 1}{1-\mathbb{L}^{-1}}\left( (\mathbb{L} + N-1)\sum_{\alpha\in\Delta_{\sigma,+}^{im} }y^{\alpha} +  
\sum_{\substack{\alpha\in\Delta_{\sigma,+}^{re}\\ \sum_{I_2\cup I_3} \alpha_i \textrm{ is even}}} y^{\alpha} 
-\mathbb{L}^{-\half}\sum_{\substack{\alpha\in\Delta_{\sigma,+}^{re} \\ \sum_{I_2\cup I_3} \alpha_i \textrm{ is odd}}}y^\alpha
 \right)\right).  \]
 Thus proving Theorem \ref{thm_A}
 \[ A^\sigma_U(y) = \prod_{\alpha\in \Delta_{\sigma,+}}A^\alpha(y) \]
 for the special case of the partition $\sigma$.
\section{The universal DT series : general case}\label{sec_univ2}
In this section we will prove Theorem \ref{thm_A} for any partition $\sigma$.

\subsection{Mutation and the root system}\label{subsec_root}
Recall that the simple reflection provides a bijection between $\Delta_{\sigma,+}\backslash \{\alpha_k\}$ and $\Delta_{\sigma',+}\backslash \{\alpha'_k\}$ (see \S \ref{subsec_mutation}). 
The simple root $\alpha_k$ maps to $-\alpha'_k$.

For $\alpha\in \Delta^{\mr{re}}_+$, let $x_\alpha$ be a simple module with $\underline{\mathrm{dim}}\alpha$.
By \cite[Proposition 2.14]{3tcy}, 
$\sum_{i\notin \hat{I}_r}\alpha_i$ is odd (resp. even) if and only if 
$\mathrm{ext}^1(x,x)=0$ (resp. $=1$).
In particular, the parity of $\sum_{i\notin \hat{I}_r}\alpha_i$ is preserved by the simple reflection.

\subsection{Wall-crossing formula}\label{subsec_univ2}
\begin{thm}\cite[Theorem 4.9]{motivic_WC}\label{thm_WC}
\[
A_U^{\sigma'}(\mathbf{y})
=
\frac{A_U^{\sigma}(\mathbf{y})}{\mathbb{E}(y_k)}\times \mathbb{E}(y_k^{-1})
\]
\end{thm}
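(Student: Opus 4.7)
The plan is to adapt the wall-crossing argument of \cite[Theorem 4.9]{motivic_WC} to our setting, which differs only in that $Q_\sigma$ may have loops and oriented $2$-cycles. The essential input is the torsion pair description of \S\ref{subsec_mutation}: since $((\mathrm{mod}\,J_\sigma)^k, \mathcal{S}_k)$ is a torsion pair on $\mathrm{mod}\,J_\sigma$ and $\Phi_k^{-1}(\mathrm{mod}\,J_{\sigma'})$ is obtained by tilting with respect to it, the categories $\mathrm{mod}\,J_\sigma$ and $\mathrm{mod}\,J_{\sigma'}$ share the common subcategory identified by $\Phi_k$ with $(\mathrm{mod}\,J_\sigma)^k \simeq (\mathrm{mod}\,J_{\sigma'})_k$, and they differ only in whether the extra simple object is $s_k$ (of dimension vector $\al_k$) or $s'_k$ (of dimension vector $-\al_k$ in the original coordinates).

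First, I would work in the motivic Hall algebra of $\mathrm{mod}\,J_\sigma$ (and of $\mathrm{mod}\,J_{\sigma'}$), and factor the generating series according to the torsion pair. Every $J_\sigma$-module $V$ fits in a canonical short exact sequence $0 \to V^k \to V \to V_{\mathcal{S}_k} \to 0$ with $V^k \in (\mathrm{mod}\,J_\sigma)^k$ and $V_{\mathcal{S}_k} \in \mathcal{S}_k$; this gives a factorization
\[
A_U^\sigma(\mathbf{y}) = A^{(\mathrm{mod}\,J_\sigma)^k}(\mathbf{y}) \cdot \mathbb{E}(y_k),
\]
where $\mathbb{E}(y_k)$ is the universal series of $\mathcal{S}_k$, which is just the motivic DT series of the subquiver at vertex $k$ (a single vertex with, at most, loops accounted for). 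The analogous factorization for $\sigma'$, pulled back along $\Phi_k$ to $\mathrm{mod}\,J_\sigma$, reads
\[
A_U^{\sigma'}(\mathbf{y}) = A^{(\mathrm{mod}\,J_{\sigma'})_k}(\mathbf{y}) \cdot \mathbb{E}(y_k^{-1}),
\]
the substitution $y_k \mapsto y_k^{-1}$ arising because $\udim s'_k = -\al_k$ under the identification of Grothendieck groups. Once one knows that $A^{(\mathrm{mod}\,J_\sigma)^k} = A^{(\mathrm{mod}\,J_{\sigma'})_k}$ as series in $\mathbf{y}$ (this is the content of $\Phi_k$ being a motivic equivalence on the common subcategory, compatible with the trace function defining the virtual motive), dividing the two displays yields exactly the asserted formula.

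The identification $A^{(\mathrm{mod}\,J_\sigma)^k} = A^{(\mathrm{mod}\,J_{\sigma'})_k}$ is the technical heart and is where the loops/$2$-cycles must be controlled. Here I would invoke the cut-compatibility of mutation proved in Proposition \ref{prop_cut_mutation}: choosing cuts $C$ and $C'$ of $(Q_\sigma,\omega_\sigma)$ and $(Q_{\sigma'},\omega_{\sigma'})$ as in \S\ref{sec_cut}, Proposition \ref{prop_reduction} expresses both universal series as motivic integrals over the truncated representation spaces $R(J_{\sigma,C},\al)$ and $R(J_{\sigma',C'},\al)$. The map $\Phi_k$ restricts to an algebraic isomorphism between the relevant open loci (the $k$-torsion-free and $k$-torsion-cofree parts), and on those loci the trace functions agree up to an equivariant change of variables, so the motivic classes match by the usual motivic change-of-variables / equivariant cut argument.

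The main obstacle will be verifying this last step in the presence of loops at vertices other than $k$ and of $2$-cycles in $Q_\sigma$. The argument of \cite{motivic_WC} relies on vanishing of certain $\mathrm{Ext}^1$ groups involving $s_k$; in our quiver $s_k$ is still a rigid (or near-rigid) simple because $k$ is chosen as a vertex \emph{without} a loop (see \S\ref{subsec_mutation}), so the local structure around $k$ is exactly as in the loop-free case, and the explicit formulae for $R_{k-1}$ and $R_{k-1,k+1}$ from the proof of Proposition \ref{prop_cut_mutation} provide the required matching of trace functions. Loops at other vertices and $2$-cycles appear only in the ``frozen'' part of the quiver under the mutation at $k$, and contribute identically to both sides; a careful bookkeeping of $\chi(\al,\al)$ and $d_C(\al)$ under the simple reflection then completes the proof.
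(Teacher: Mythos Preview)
Your proposal is correct and follows essentially the same two-step argument as the paper: factor $A_U^\sigma$ and $A_U^{\sigma'}$ via the torsion pairs of \S\ref{subsec_mutation} to peel off the factors $\mathbb{E}(y_k)$ and $\mathbb{E}(y_k^{-1})$, then identify the remaining series using the cut-compatible equivalence of Proposition~\ref{prop_cut_mutation} together with the reduction of Proposition~\ref{prop_reduction}. The paper records this as Step~1 (torsion-pair factorization) and Step~2 (invoking Proposition~\ref{prop_cut_mutation} and \cite[Theorem~4.7]{motivic_WC}), and your outline matches it; the one cosmetic point is that since $k\notin\hat I_r$ the vertex $k$ genuinely has no loop, so $\mathcal{S}_k$ is exactly the category of vector spaces at $k$ and its series is $\mathbb{E}(y_k)$ on the nose.
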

\begin{proof}

\noindent\underline{Step 1} :
By the observation in \S \ref{subsec_mutation}, we have the following factorization:
\[
A_U^\sigma=\mathbb{E}(y_k)\times A_k^\sigma
\]
where 
\[
\mathbb{E}(y):=\sum_{n\geq 0}\frac{[\mathrm{pf}]}{[\mathrm{GL}_n]_\mr{vir}}\cdot y^n,\quad y_k:=y_{\alpha_k}
\]
and $A_k^\sigma$ is the generating series of virtual motives of moduli stacks of objects in $(\mathrm{mod}J_{\sigma})_k$. 
We also have
\[
A_U^{\sigma'}=A^{\sigma',k} \times \mathbb{E}(y_k^{-1})
\]
where $A^{\sigma',k}$ is the generating series of virtual motives of moduli stacks of objects in $(\mathrm{mod}J_{\sigma'})^k$. 

\noindent\underline{Step 2} : 
By Proposition \ref{prop_cut_mutation}, we have $A^{\sigma}_k=A^{\sigma',k}$ (see \cite[Theorem4.7]{motivic_WC}).
\end{proof}
Now Theorem \ref{thm_A} for any $\sigma$ follows from the result in \S \ref{sec_univ1} combined with Theorem \ref{thm_WC} and the remark in \S \ref{subsec_root}.

\subsection{Factorization of the universal series}\label{subsec_factorization}
We will say that a stability parameter $\ze$ is generic, if for any stable $J_\sigma$-module $V$, we have $\ze\cdot\udim V\ne0$.
For generic stability parameter $\ze$, let $\gM_\ze^+(J_\sigma,\al)$ (resp.\ $\gM_\ze^-(J_\sigma,\al)$) denote the moduli stacks 
of $J_\sigma$-modules $V$ such that $\udim V=\al$ and such that all the HN factors $F$ of $V$ with respect to the stability parameter \ze satisfy $\ze\cdot\udim F>0$ (resp. $<0$).
Let $[\gM_\ze^\pm(J_\sigma,\al)]_\vir$ denote the virtual motive of the moduli stack defined in the same way as \eqref{eqref_33}.
We put
\[
A_\ze^\pm(y)=\sum_{\al\in\mathbb{N}^{\hI}}[\gM_\ze^\pm(J,\al)]_\vir\cdot y^\al.
\] 
\begin{lem}\cite[Lemma 2.6]{mmns}\label{lem_52}
\label{lmm:decompose}
The generating series $A_\ze^\pm$ are given by 
\[
A_\ze^\pm(y)=
\prod_{\al\in\De_{\sigma,+}, \pm\ze\cdot\alpha<0}
A^{\al}(y).
\]
\end{lem}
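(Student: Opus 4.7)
The plan is to combine Harder--Narasimhan (HN) filtrations in the motivic Hall algebra with Theorem~\ref{thm_A} via a wall-crossing induction.

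First, HN filtration with respect to generic $\ze$ yields in the motivic Hall algebra the factorization
\[A_U^\sigma(y) = A_\ze^-(y) * A_\ze^{ss,0}(y) * A_\ze^+(y),\]
where $A_\ze^{ss,0}$ counts $\ze$-semistable modules with $\ze\cdot\udim V=0$. By the genericity hypothesis, no $\ze$-stable module has $\ze\cdot\udim V=0$, so every slope-$0$ semistable vanishes and $A_\ze^{ss,0}=1$. As emphasized in the footnote after the definition of $A_U^\sigma$, the Euler form on $\mathbb{Z}^{\hat I}$ is trivial in our setting, so the Hall product $*$ collapses to the ordinary commutative product on generating series, giving
\[A_U^\sigma(y) = A_\ze^-(y) \cdot A_\ze^+(y).\]

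Combining with the product formula of Theorem~\ref{thm_A}, it remains to identify $A_\ze^\pm(y)$ with the partial products of $A^\alpha(y)$ indexed by the sign of $\ze\cdot\alpha$. I would argue by wall-crossing. Fix a reference chamber $\ze^\star$ in which $\ze^\star\cdot\alpha>0$ for every $\alpha\in\Delta_{\sigma,+}$; there $A_{\ze^\star}^-=1$ and $A_{\ze^\star}^+=A_U^\sigma$ trivially match the claim. To reach the given $\ze$, connect it to $\ze^\star$ by a generic path meeting only codimension-one walls $\ze\cdot\alpha_0=0$ for single positive roots $\alpha_0$. At each crossing, I would show that exactly the factor $A^{\alpha_0}(y)$ transfers between the two halves of the factorization, which together with the $\ze$-independence of $A_U^\sigma = A_\ze^-\cdot A_\ze^+$ forces the claimed formula inductively.

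The main obstacle is this single-wall transfer claim. The argument requires analyzing $\ze$-semistable modules that become slope-$0$ at the wall: these are the modules whose dimension vectors lie in $\mathbb{N}\alpha_0$. For a real root $\alpha_0$ they are iterated self-extensions of the unique simple of dimension $\alpha_0$, whose motivic enumeration reproduces the formula for $A^{\alpha_0}$; for an imaginary root, the $\mathbb{P}^{N-1}$-family of simples of dimension $\delta$ contributes precisely the $(N-1+\mathbb{L})$ numerator in $A^{n\delta}$. This identification follows the strategy of \cite[\S 3]{mmns}, to which the paper explicitly appeals.
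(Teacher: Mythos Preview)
Your first half is exactly right and is the essential step: the Harder--Narasimhan factorization in the motivic Hall algebra, together with genericity ($A_\ze^{ss,0}=1$) and the triviality of the Euler form, gives the commutative factorization $A_U^\sigma(y)=A_\ze^-(y)\cdot A_\ze^+(y)$. This is what the cited argument in \cite{mmns} uses.

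Where you diverge from the paper's route is in the second half. After invoking Theorem~\ref{thm_A}, no wall-crossing induction is needed: the lemma follows from a direct support argument. Every module contributing to $A_\ze^-$ has dimension vector $\beta$ equal to a sum of HN-factor dimension vectors each satisfying $\ze\cdot(\,\cdot\,)<0$, so $A_\ze^-$ is supported on $\{\beta:\ze\cdot\beta\le 0\}$; likewise $A_\ze^+$ is supported on $\{\beta:\ze\cdot\beta\ge 0\}$. On the other side, each $A^\alpha$ is a series in $y^\alpha$ alone, so $\prod_{\ze\cdot\alpha<0}A^\alpha$ is supported on the same negative cone and $\prod_{\ze\cdot\alpha>0}A^\alpha$ on the positive one. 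A power series with constant term $1$ admits a \emph{unique} factorization as a product of two such series supported on these complementary submonoids (if $N_1P_1=N_2P_2$ then $N_1N_2^{-1}=P_2P_1^{-1}$ lies in both $1+(\text{negative ideal})$ and $1+(\text{positive ideal})$, hence equals $1$). Comparing the two factorizations of $A_U^\sigma$ gives the lemma immediately.

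Your wall-crossing route is not wrong, but it defers the work rather than avoiding it: the step ``exactly the factor $A^{\alpha_0}$ transfers'' amounts to identifying the semistable series on the ray through $\alpha_0$ with $A^{\alpha_0}$, which is essentially the statement being proved (restricted to one ray). The justification you sketch---a unique simple of dimension $\alpha_0$ for real roots, a $\mathbb{P}^{N-1}$-family for $\delta$---is a separate and nontrivial input; it belongs to \cite[\S 3]{mmns} rather than to the lemma you cite, and at the imaginary wall all the factors $A^{n\delta}$ must transfer simultaneously rather than one at a time. The uniqueness argument sidesteps all of this.
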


\section{Motivic DT with framing}\label{sec_framing}
\label{sec:framing}
We denote by $\tilde{Q}_\sigma$ the new quiver obtained from $Q_\sigma$ by adding a new vertex $\infty$ and a single new arrow $\infty\to 0$. 
Let $\tilde{J}_\sigma=J_{\tilde{Q}_\sigma,w_\sigma}$ be the Jacobian algebra corresponding to the quiver with potential $(\tilde{Q}_\sigma,w_\sigma)$, where we view $w_\sigma$ as a potential for $\tilde{Q}_\sigma$ in the obvious way. 



Let $\ze\in\cR^{\hI}$ be a vector, which we will refer to as the stability parameter.
A $\tilde{J}_\sigma$-representation $\wtl V$ with $\dim\wtl V_\infty= 1$ is said to be $\ze$-(semi)stable, if it is (semi)stable with respect to $(\ze,\ze_\infty)\in\mathbb{R}^{\hI\sqcup \{\infty\}}$ (see Definition \ref{semist}), where $\ze_\infty=-\ze\cdot\udim V$. 
As in \S \ref{subsec_32}, a stability parameter $\ze\in\cR^{Q_0}$ is said to be {\it generic}, if for any stable $J$-module $V$ we have $\ze\cdot\udim V\ne0$.

For a stability parameter $\ze\in\cR^{Q_0}$ and a dimension vector $\al\in (\mathbb{Z}_{\geq 0})^{\hI}$, 
let $\gM_\ze(\wtl J_\sigma,\al)$ denote the moduli stack of $\ze$-semistable $\wtl J_\sigma$-representations  with dimension vector $(\alpha,1)$. 
As in the introduction, we define the generating function:
\[
Z_\ze(y_0,\ldots,y_{N-1})=
Z_\ze(y):=
\sum_{\al\in(\mathbb{Z}_{\geq 0})^{\hI}}
\Bigl[{\MM}_\ze\bigl(\Jtilde_\sigma,\al\bigr)\Bigr]_\vir\cdot y^\al.
\]
\begin{thm}\cite[Proposition 4.6]{mmns} 
\label{thm_framed_vs_nonframed}
For a generic stability parameter \ze, we have
\begin{equation}
Z_\ze(y)
=\frac{A_\ze^-(-\cL^{\oh}y_0,y_1,\ldots, y_{N-1})}{A_\ze^-(-\cL^{-\oh}y_0,y_1,\ldots, y_{N-1})},
\end{equation}
where $A_\ze^-$ were defined in \S \ref{subsec_factorization}.
\end{thm}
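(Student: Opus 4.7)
The plan is to follow the approach of \cite{mmns}[Proposition 4.6], which this theorem cites directly, and verify that no step of that argument relies on specifics of the conifold quiver. The starting observation is that a $\wtl J_\sigma$-representation $\wtl V$ with $\dim \wtl V_\infty = 1$ is the same data as an unframed $J_\sigma$-module $V$ equipped with a framing vector $s \in V_0$ (the single new arrow $\infty \to 0$), so there is a short exact sequence $0 \to V \to \wtl V \to \cC_\infty \to 0$ in $\mathrm{mod}\,\wtl J_\sigma$.

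First, I would unpack the $\ze$-stability condition. Because $\ze_\infty = -\ze \cdot \udim V$, the total slope of $\wtl V$ vanishes, so potential destabilising subobjects split into two families: submodules contained in $V$, and submodules of $\wtl V$ containing $\infty$. The absence of destabilisers of the first type forces $V$ to lie in the subcategory whose HN factors all have $\ze \cdot \udim < 0$, i.e.\ the subcategory whose generating series is $A_\ze^-(y)$. The absence of destabilisers of the second type then becomes an open condition on the framing vector $s$, namely that $s$ does not lie in $U_0$ for any proper $J_\sigma$-submodule $U \subset V$ whose dimension vector pairs to zero with $\ze$.

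Second, I would stratify the framed moduli stack $\MM_\ze(\wtl J_\sigma, \al)$ as a fibration over the unframed stack associated with $A_\ze^-$, with fibre over $[V]$ the open subvariety of $V_0$ cut out by the stability condition on $s$. At the level of motivic classes, the unrestricted choice of $s$ contributes $[V_0] = \cL^{\al_0}$, and passing to virtual motives via $[\bullet]_\vir = (-\cL^{\oh})^{-\dim}[\bullet]$ turns this into the monomial substitution $y_0 \mapsto -\cL^{\oh}y_0$ in the series $A_\ze^-$. Subtracting the locus where $s$ factors through a proper submodule produces the denominator, in which the framings landing in a $\ze$-trivial subobject are stratified by that subobject and summed by the same universal series, yielding the substitution $y_0 \mapsto -\cL^{-\oh}y_0$.

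The decisive and expected hardest step is to convert this stratified count into the clean quotient on the right-hand side of the theorem. This requires a motivic integral identity in Hall-algebra style: the contribution of non-generating framings cancels telescopically against the denominator, in the same way that quantum-dilogarithm identities produce ratio formulae in the DT/PT wall-crossing. Provided one checks (as in \cite{mmns}) that the argument uses only the formal structure of a quiver with potential together with the existence of a cut, and not features special to the conifold, it carries over verbatim to $(Q_\sigma, \omega_\sigma)$ and yields the stated formula.
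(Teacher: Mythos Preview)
The paper gives no proof of this statement; it is quoted verbatim from \cite[Proposition 4.6]{mmns}, and the only work done here is the observation that nothing in that argument depends on the particular quiver. Your proposal is therefore being compared not against anything in this paper, but against the argument in \cite{mmns} that you are attempting to reconstruct.

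Your overall strategy is the right one, but the stability analysis in your second paragraph contains a concrete error that makes the subsequent stratification unusable as written. A submodule $\wtl U\subset\wtl V$ containing the framing vertex has $J_\sigma$-part $U'\ni s$, and its slope under $(\ze,\ze_\infty)$ with $\ze_\infty=-\ze\cdot\udim V$ is $(\ze\cdot\udim U'-\ze\cdot\udim V)/(\lvert U'\rvert+1)$. Semistability therefore requires $\ze\cdot\udim(V/U')\ge 0$ for every proper $U'\ni s$, not that $\ze\cdot\udim U'=0$. Worse, once you have established that $V$ lies in the minus category, every nonzero submodule of $V$ has $\ze\cdot\udim<0$, so for generic $\ze$ your stated condition on $s$ is vacuous; taken literally it would yield $Z_\ze(y)=A_\ze^-(-\cL^{\oh}y_0,\ldots)$ with no denominator.

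The cleaner mechanism, and the one actually used in \cite{mmns}, avoids this direct inclusion--exclusion on framings. One shows that every framed module $\wtl V$ with $\dim\wtl V_\infty=1$ and underlying $J_\sigma$-module in the minus category sits in a \emph{unique} short exact sequence
\[
0\longrightarrow \wtl V'\longrightarrow \wtl V\longrightarrow V''\longrightarrow 0
\]
with $\wtl V'$ a $\ze$-semistable framed module and $V''$ an unframed module in the minus category; this is the Harder--Narasimhan factorisation for the framed stability at phase zero. Passing to generating series and using that the Euler form on $J_\sigma$-modules is trivial (so the motivic Hall product collapses to the commutative product of power series), this becomes the single identity
\[
A_\ze^-\bigl(-\cL^{\oh}y_0,y_1,\ldots,y_{N-1}\bigr)
= Z_\ze(y)\cdot A_\ze^-\bigl(-\cL^{-\oh}y_0,y_1,\ldots,y_{N-1}\bigr),
\]
where the two shifts in $y_0$ record the Euler pairing of the framing vertex against the dimension vector on either side of the extension. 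What you describe as ``telescopic cancellation'' is thus not an iterated subtraction over bad framings but a single HN factorisation identity, and this is the step that your sketch leaves unresolved.
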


Combined with Lemma \ref{lem_52}, we get the formula in Corollary \ref{cor_02}.
\begin{rk}
If we cross the wall $W_\alpha$, we get (or lose) a factor $Z_{\al}(y)$ in the generating function. 
This is compatible with the result in \cite{open_3tcy}.
\end{rk}

\section{DT/PT series}\label{sec_DTPT}
\subsection{Chambers in the moduli spaces}
For a root $\alpha\in \Lambda$, 
let $W_\alpha$ denote the hyperplane in the space $\mathbb{R}^{\hI}$ of stability parameters which is orthogonal to $\alpha$. 
We put 
\[
W=W_\delta\cup \bigcup_{\alpha\in \Delta_{\sigma,+}^{\mr{re}}}W_\alpha. 
\]
A connected component of the complement of $W$ in $\mathbb{R}^{\hI}$ is called a chamber.
\begin{thm} {\rm\cite[Proposition 2.10]{3tcy}, \cite[Proposition 3.10, 3.11]{nagao-nakajima}} \label{thm_desrc}
The set of generic parameters in $\mathbb{R}^{\hI}$ is the compliment of $W$.
\begin{enumerate}\renewcommand{\theenumi}{\roman{enumi}}
\item For $\ze$ with $\zeta_i<0$ ($\forall i$), 
the moduli spaces $\MM_\ze(\Jtilde,\al)$ are the NCDT moduli spaces, the moduli spaces of cyclic $J$-modules from~\cite{szendroi-ncdt}.
\item 
For $\ze$ in the same chamber as $(1-N+\varepsilon,1,1,\ldots,1)$ ($0<\varepsilon\ll 1$), the moduli spaces $\MM_\ze(\Jtilde,\al)$ are the DT moduli spaces of $Y_\sigma$ from~\cite{mnop}, 
the moduli spaces of subschemes on $Y_\sigma$ with support in dimension at most 1.
\item 
For $\ze$ in the same chamber as $(1-N-\varepsilon,1,1,\ldots,1)$ ($0<\varepsilon\ll 1$), 
the moduli spaces $\MM_\ze(\Jtilde, \al)$ are the PT moduli spaces of $Y_\sigma$ introduced in~\cite{pt1}; 
these are moduli spaces of stable rank-$1$ coherent systems. 
\end{enumerate}
\end{thm}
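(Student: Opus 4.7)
The plan is to prove the three identifications separately, each by unwinding the definition of $\zeta$-stability for framed $\wtl J_\sigma$-modules in the appropriate chamber and matching it with the relevant moduli problem. First, for the genericity claim, note that a stability parameter $\zeta$ fails to be generic precisely when there is a stable $J_\sigma$-module $V$ with $\zeta\cdot\udim V=0$. The dimension vectors of stable $J_\sigma$-modules coincide with the positive roots in $\Delta_{\sigma,+}$ (this is essentially the content of the root-system description given in \S\ref{sec_NCCR} together with \cite[\S 2]{3tcy}), so the non-generic locus equals $W$.

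For part (i), I would show that when $\zeta_i<0$ for all $i\in\hI$, a framed module $\wtl V$ with $\dim\wtl V_\infty=1$ is $\zeta$-stable if and only if the image of the framing vector generates $\wtl V$ as a $J_\sigma$-module. Indeed, if $U\subset\wtl V$ is a proper $\wtl J_\sigma$-submodule containing $\wtl V_\infty$, then $\udim U=(\beta,1)$ with $\beta<\alpha$, and $(\zeta,\zeta_\infty)\cdot(\beta,1)=\zeta\cdot\beta-\zeta\cdot\alpha>0$ unless $\beta=\alpha$; conversely, if $\wtl V$ is not generated by the framing, the submodule generated by $\wtl V_\infty$ destabilizes. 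This matches the definition of NCDT moduli in \cite{szendroi-ncdt}.

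For parts (ii) and (iii) I would invoke the derived equivalence $\Phi_\sigma\colon D^b(\mathrm{mod}\,J_\sigma)\simeq D^b(\mathrm{Coh}\,\mca{Y}_\sigma)$ of \cite[Theorem 1.15]{3tcy}, which sends $\wtl J_\sigma$-modules with $\dim\wtl V_\infty=1$ to two-term complexes $[\OO_{\mca{Y}_\sigma}\xrightarrow{s} F]$ where $F$ has support of dimension at most $1$. The chamber wall $W_\delta$ separates the DT and PT regions: on one side of $W_\delta$, stability forces $\mr{cok}(s)$ to be zero-dimensional (yielding ideal sheaves of subschemes of dimension $\leq 1$, i.e.\ the DT moduli of \cite{mnop}); on the other, stability forces $F$ to be pure one-dimensional and $\mr{cok}(s)$ to be zero-dimensional (yielding PT stable pairs \cite{pt1}). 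I would verify that the explicit parameters $(1-N\pm\varepsilon,1,\ldots,1)$ lie on the predicted sides of $W_\delta$ by computing $\zeta\cdot\alpha$ directly for $\alpha=\delta$ and comparing with the sign of the pairing for the classes of ideal sheaves versus PT pairs. The chamber structure in \cite[\S 3]{nagao-nakajima} then pins down the stability condition uniquely within each chamber.

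The main obstacle is the precise dictionary between the combinatorial stability condition on $\wtl J_\sigma$-modules and the sheaf-theoretic stability defining DT and PT moduli spaces; this is where the heavy lifting of the derived equivalence must be used. In our setting, however, the existence of loops in the quiver (corresponding to vertices in $\hI_r$) means some care is needed compared to the conifold case of \cite{nagao-nakajima}, but the essential arguments transport without modification because the tilting bundle in \cite[Theorem 1.10]{3tcy} identifies simple modules at the vertices of $\hI$ with the appropriate simple perverse sheaves at the exceptional curves. With this identification in hand, the chamber descriptions from \cite[\S 3]{nagao-nakajima} and \cite[Proposition 2.10]{3tcy} apply verbatim.
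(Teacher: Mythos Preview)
The paper gives no proof of this theorem: it is stated purely as a citation of \cite[Proposition~2.10]{3tcy} and \cite[Propositions~3.10, 3.11]{nagao-nakajima}, with no argument supplied. So there is no ``paper's own proof'' to compare your sketch against; the authors simply import the result.

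Your outline is broadly correct and follows the same strategy as the cited references. The argument for part~(i) is essentially complete: you correctly observe that when all $\zeta_i<0$, any proper submodule containing the framing vertex has positive slope and hence destabilizes, while any submodule not containing it has negative slope; together these force $\zeta$-stability to coincide with cyclicity. (Your ``conversely'' clause restates the same direction rather than the genuine converse, but the missing direction is immediate.) For the genericity claim, be careful: ``the dimension vectors of stable $J_\sigma$-modules coincide with the positive roots'' is not literally true without specifying the stability parameter and without distinguishing real from imaginary roots; what is actually needed is that the dimension vectors supporting strictly semistable phenomena trace out exactly the hyperplanes $W_\alpha$, which is the content of \cite[Proposition~2.10]{3tcy}.

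For parts~(ii) and~(iii) your proposal is a plan rather than a proof. The substantive work---showing that under the derived equivalence the $\zeta$-stability condition on framed $\wtl J_\sigma$-modules translates into the ideal-sheaf condition on one side of $W_\delta$ and the PT stable-pair condition on the other---is precisely what \cite[\S 3]{nagao-nakajima} carries out for the conifold and \cite{3tcy} extends to the general $\mathcal{X}_{N_0,N_1}$. You gesture at this (``the heavy lifting of the derived equivalence must be used'') but do not perform it, and the verification that $(1-N\pm\varepsilon,1,\ldots,1)$ land in the correct chambers relative to $W_\delta$ and the finite-root walls is more delicate than a single sign check on $\zeta\cdot\delta$. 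Since the paper itself defers entirely to the cited sources, your level of detail is not inappropriate, but you should not present this as a self-contained proof.
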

\begin{rk} In the above statements $\varepsilon$ depends on the dimension vector $(\al,1)$. 
\end{rk}

\subsection{Motivic PT and DT invariants}

Let 
\[
\zeta_{\mr{DT}}=(1-N-\varepsilon,1,1,\ldots,1)
,\quad
\zeta_{\mr{PT}}=(1-N+\varepsilon,1,1,\ldots,1)
\] 
($0<\varepsilon\ll 1$) be stability parameters corresponding to DT and PT moduli spaces. 
Then we have
\begin{align*}
\{\alpha\in \Delta_{\sigma,+} \mid \ze_{\mathrm{DT}}\cdot\al<0\}
&=\Delta^{\mr{re},+}_+,\\
\{\alpha\in \Delta_{\sigma,+} \mid \ze_{\mathrm{PT}}\cdot\al<0\}
&=\Delta^{\mr{re},+}_+\sqcup \Delta^{\mr{im}}_+.
\end{align*}
As we mentioned in the introduction the variable change induced by the derived equivalence is given by
\[
s:=y_0\cdot y_1\cdot\cdots\cdot y_{N-1},\quad 
T_i=y_i.
\]
Here $s$ is the variable for the homology class of a point and 
$T_i$ is the variable for the homology class of $C_i$.
Then we get the formulae in Corollary \ref{announce_thm_main_cor}.

\subsection{Connection with the refined topological vertex}
As the second author studied in \cite{ncdt-vo}, we can apply the vertex operator method \cite{ORV} to get a product expansion of the refined topological vertex for $\mathcal{Y}_\sigma$.
Then we see that the PT generating function can be described by the refined topological vertices normalized by the refined MacMahon functions. \footnote{Unfortunately, the DT generating function does not coincide with the refined topological vertex. See \cite[\S 4.3]{mmns} for detail.}

\thispagestyle{empty}

\section{Appendix}
Throughout this appendix we will work with a fixed choice of basis $\mathcal{B}$. In \S \ref{nil} we chose a basis
\[ \mathcal{B }  =  \{ H^{p}v^{a,b}_l(k) \mid  1\leq k \leq b^{a,b}_l,  0\leq a,b \leq N-1, 0 \leq p \leq N(l-1)+ |b-a| +1 \} \]
and defined linear spaces
\begin{eqnarray*}
F(\{ \pi^{[a,b]} \}) & = & \left\{ L\in \bigoplus_{i\in I_1} \Hom(V_i,V_i) \oplus\bigoplus_{i\in I_2} \Hom(V_{i+1},V_i)\, \Big|\, (L,H(\pi^{[a,b]})) \in R(\{ \pi^{[a,b]} \}) \right\} \\
S(\{ \pi^{[a,b]} \}) & = & \left\{ N \in \bigoplus_{i\in I_1\cup I_2 \cup I_3} \Hom(V_i,V_i) \, \Big|\, \left[ N, H(\pi^{[a,b]}) \right] =0 \right\}
\end{eqnarray*}
with dimensions $ T(\{ \pi^{[a,b]} \})= \dim F(\{ \pi^{[a,b]} \})$ and $ B(\{ \pi^{[a,b]} \}) = \dim S(\{ \pi^{[a,b]} \})$. The goal of the appendix is to prove Proposition \ref{difference}, that is to show that the difference $ T(\{\pi^{[a,b]}\}) - B(\{\pi^{[a,b]}\}) $ is equal to
\[-\frac{1}{2}\sum_{i\in I_2} \left( \sum_{ b \neq i } l(\pi^{[i+1,b]}) - \sum_{ c\neq i+1 }l(\pi^{[c,i]}) \right)^2 
- \half \sum_{a\in I_3 , b\not\in I_2} \sum_{i\geq 1}(b_i^{a,b})^2 - \half\sum_{a\not\in I_3 , b\in I_2}  \sum_{i\geq 1} (b_i^{a,b})^2. \] For some early examples it becomes clear that the dimension of $F(\{ \pi^{[a,b]} \})$ and $S(\{ \pi^{[a,b]} \})$ are determined by solving a set of linearly independent equations. We will see that these dimensions are quadratic polynomials in the number of parts $b^{a,b}_l$ of the partitions $\{ \pi^{[a,b]} \}$. An initial means of simplifying the calculation is to break the spaces $F(\{ \pi^{[a,b]} \})$ and $S(\{ \pi^{[a,b]} \})$ down into simpler spaces. One easy observation is that not only are the spaces $F(\{ \pi^{[a,b]} \})$ and $S(\{ \pi^{[a,b]} \})$ linear but they come with a natural vector space structure, the origin corresponding to the zero matrix in both cases.  This means that as vector spaces we have decompositions 
\begin{eqnarray*}
F(\{ \pi^{[a,b]} \}) &=& \bigoplus_{0\leq a,b,c,d \leq N-1} F(\pi^{[a,b]},\pi^{[c,d]}) \\
S(\{ \pi^{[a,b]} \}) &=& \bigoplus_{0\leq a,b,c,d \leq N-1} S(\pi^{[a,b]},\pi^{[c,d]})
\end{eqnarray*}
whose summands are given by the following definition.
\begin{defn}We define
\begin{eqnarray*}
F(\pi^{[a,b]},\pi^{[c,d]}) &=& F(\{ \pi^{[a,b]} \}) \cap \bigoplus_{i\in I_1\cup I_2} \Hom(V^{a,b} ,V^{c,d}) \\
S(\pi^{[a,b]},\pi^{[c,d]}) &=& S(\pi^{[a,b]},\pi^{[c,d]}) \cap  \bigoplus_{i\in I_1\cup I_2 \cup I_3} \Hom(V^{a,b} ,V^{c,d}).
\end{eqnarray*}
\end{defn}
These subspaces are essentially given by the block matrices for the decomposition $V = \bigoplus_{0\leq a,b \leq N-1} V^{a,b}$.
\begin{defn}We define
\begin{eqnarray*}
T(\pi^{[a,b]},\pi^{[c,d]}) &=& \dim F(\pi^{[a,b]},\pi^{[c,d]}) \\
B(\pi^{[a,b]},\pi^{[c,d]}) &=& \dim S(\pi^{[a,b]},\pi^{[c,d]}) .
\end{eqnarray*}
\end{defn}
Both $T(\pi^{[a,b]},\pi^{[c,d]})$ and $B(\pi^{[a,b]},\pi^{[c,d]})$ can be written as quadratic expressions in the number of parts of $\pi^{[a,b]}$ and $\pi^{[c,d]}$. To do this we introduce a quadratic form on the space of all partitions and a combinatorial operation that removes a box from each column of the the partition.
\begin{defn}We define
\begin{eqnarray*} M &:& \mathcal{P} \otimes \mathcal{P} \to \mathbb{Z}_{\geq 0} \\
 &:& (1^{b_1}2^{b_2}3^{b_3}\cdots )\otimes (1^{c_1}2^{c_2}3^{c_3}\cdots) \mapsto \sum_{i\geq 1} \left(\sum_{j\geq i } b_j \right)\left(\sum_{j\geq i } c_j \right) \\
 ' &:& \mathcal{P} \to \mathcal{P} \\ &:& \pi = (1^{b_1}2^{b_2}3^{b_3}\cdots) \mapsto \pi '= (1^{b_2}2^{b_3}3^{b_4}\cdots). 
\end{eqnarray*}
\end{defn}
Let us begin with the easier case. We compute dimensions $B(\pi^{[a,b]},\pi^{[c,d]}) $ of the spaces $S(\pi^{[a,b]},\pi^{[c,d]})$.
\begin{lem}\label{Ndeter}Let $N\in S(\pi^{[a,b]},\pi^{[c,d]}) $ then the matrix $N$ is uniquely determined by its value on the vectors $v^{a,b}_l(k)$. Moreover the only restriction on the image of such a vector is that it lie in the linear subspace
\[ N(v^{a,b}_l) \in V_a\cap V^{c,d}\cap V^{N\cdot(l-1) + |b-a| +1}. \]
\end{lem}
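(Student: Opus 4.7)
The plan is to exploit the $\mathbb{C}[x]$-module structure on $V$ in which $x$ acts as $H$. Since any $N\in S(\pi^{[a,b]},\pi^{[c,d]})$ commutes with $H$, the restriction $N|_{V^{a,b}}$ is a $\mathbb{C}[x]$-module homomorphism $V^{a,b}\to V^{c,d}$. By construction the vectors $v^{a,b}_l(k)$ form a $\mathbb{C}[x]$-module generating set for $V^{a,b}$: each generates a cyclic summand isomorphic to $\mathbb{C}[x]/(x^{N(l-1)+|b-a|+1})$, and every basis element $H^p v^{a,b}_l(k)\in\mathcal{B}$ lies in the $\mathbb{C}[x]$-orbit of one of the $v^{a,b}_l(k)$. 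Hence once the images $N(v^{a,b}_l(k))$ are prescribed, $N$ is forced everywhere on $V^{a,b}$ by $N(H^p v^{a,b}_l(k))=H^p N(v^{a,b}_l(k))$, which proves the uniqueness half of the lemma.

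Next I would identify the necessary constraints on each image $N(v^{a,b}_l(k))$. Three conditions are immediate: $(i)$ $N(v^{a,b}_l(k))\in V_a$ because $v^{a,b}_l(k)\in V_a$ and $N$ preserves the $i$-grading (it is block-diagonal, sitting inside $\bigoplus_i\End(V_i)$); $(ii)$ $N(v^{a,b}_l(k))\in V^{c,d}$ since $N$ is defined as a map into that summand; and $(iii)$ $N(v^{a,b}_l(k))\in V^{N(l-1)+|b-a|+1}$, obtained by applying the commutation relation to the annihilator of the cyclic generator, $H^{N(l-1)+|b-a|+1}N(v^{a,b}_l(k))=N(H^{N(l-1)+|b-a|+1}v^{a,b}_l(k))=0$.

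Finally I would verify the converse: any tuple of images chosen in $V_a\cap V^{c,d}\cap V^{N(l-1)+|b-a|+1}$ extends to a well-defined $N\in S(\pi^{[a,b]},\pi^{[c,d]})$. The only relations satisfied by the basis $\{H^p v^{a,b}_l(k)\}$ of $V^{a,b}$ are the vanishing relations $H^{N(l-1)+|b-a|+1}v^{a,b}_l(k)=0$ (the remaining basis vectors being linearly independent), so condition $(iii)$ is precisely what is required for the assignment $H^p v^{a,b}_l(k)\mapsto H^p N(v^{a,b}_l(k))$ to be consistent; conditions $(i)$ and $(ii)$ guarantee that the extension lands in the prescribed graded subspaces. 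Commutativity with $H$ is then automatic from the definition. There is no serious obstacle here—the argument is a structural linear algebra fact about module maps out of a direct sum of cyclic $\mathbb{C}[x]$-modules—but the point that needs to be handled carefully is the consistency check, namely that $(iii)$ really is equivalent to all the relations imposed by the cyclic structure.
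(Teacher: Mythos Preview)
Your proof is correct and follows essentially the same approach as the paper: both use that commutation with $H$ forces $N(H^r v^{a,b}_l(k))=H^r N(v^{a,b}_l(k))$, identify the three constraints on the image (grading, target summand, annihilator), and observe that these suffice. Your phrasing via $\mathbb{C}[x]$-module homomorphisms is a cleaner packaging of the same argument, and your explicit converse check is a welcome addition that the paper leaves implicit.
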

\begin{proof}To define the linear map $N$ on the space $V^{a,b}$ it suffices to define its value at each of the basis vectors
\[ \{ H^rv_l^{a,b}(k) \mid 0 \leq r \leq N\cdot (l-1) +|b-a| , 1 \leq k \leq b_l^{a,b}  \}. \]
However for $N\in S( \pi^{[a,b]},\pi^{[c,d]})$ we have 
\[ N(H^rv_l^{a,b}(k)) = H^r(Nv_l^{a,b}(k)), \]
therefore the value of $N$ at each $H^r v_l^{a,b}(k)$ is determined by $N v_l^{a,b}(k)$. This proves the first part of the lemma. Now we know that the matrix $N$ maps the vector space at the $a$th vertex to itself $V_a\to V_a$, also since $N\in S( \pi^{[a,b]},\pi^{[c,d]})$ we insist that its image be in $V^{c,d}$. The only additional condition on the image of the vector $v^{a,b}_l(k)$ is
\[ H^{N\cdot (l-1) + |b-a| +1}(Nv_l^{a,b}(k)) = N(H^{N\cdot (l-1) + |b-a| +1}v_l^{a,b}(k)) = 0. \]
Combining these three conditions above we have,
\[ N(v_l^{a,b}(k)) \in V_a\cap V^{c,d} \cap V^{N\cdot (l-1) + |b-a|+1}. \]\end{proof}
\begin{cor}\label{Bdim}We have
\[ B(\pi^{[a,b]},\pi^{[c,d]}) = \left\{ \begin{array}{ll} M(\pi^{[a,b]},\pi^{[c,d]}) &  \textrm{ if } a\in [c,d] \textrm{ and } |d-a| \leq |b-a|  \\
M((\pi^{[a,b]})',\pi^{[c,d]}) & \textrm{ if }  a \in [c,d] \textrm{ and } |d-a| > |b-a|  \\
M(\pi^{[a,b]} ,(\pi^{[c,d]})') & \textrm{ if } a\not\in [c,d] \textrm{ and } |d-a| \leq |b-a|  \\
M((\pi^{[a,b]}) ',(\pi^{[c,d] })') & \textrm{ if } a\not\in [c,d] \textrm{ and } |d-a| > |b-a|. \end{array} \right .\]
\end{cor}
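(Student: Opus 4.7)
The plan is to apply Lemma \ref{Ndeter} directly: it reduces $B(\pi^{[a,b]}, \pi^{[c,d]})$ to the sum, over $l \geq 1$ with multiplicity $b^{a,b}_l$, of $\dim(V_a \cap V^{c,d} \cap V^{N(l-1)+|b-a|+1})$, so the task becomes computing this intersection dimension using the chain basis of $V^{c,d}$.

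First I would set $\alpha := |a-c|$, $\beta := |d-c|$, $\gamma := |b-a|$, and $L_m := N(m-1)+\beta+1$ for the length of the chain generated by $v^{c,d}_m(j)$. Since $V^{c,d}$ is a direct sum of such chains, a basis vector $H^r v^{c,d}_m(j)$ lies in $V_a$ iff $r \equiv \alpha \pmod N$, and lies in $V^{N(l-1)+\gamma+1} = \ker H^{N(l-1)+\gamma+1}$ iff $r \geq L_m - N(l-1) - \gamma - 1$. Writing $r = \alpha + pN$, the constraints on $p$ become $0 \leq p \leq \lfloor(N(m-1)+\beta-\alpha)/N\rfloor$ and $p \geq \lceil m - l + (\beta-\alpha-\gamma)/N \rceil$.

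Next I would run a case analysis on whether $\alpha \leq \beta$ (i.e., $a \in [c,d]$) and on the sign of $\beta-\alpha-\gamma$. Using the identities $|d-a| = \beta - \alpha$ when $a \in [c,d]$ and $|d-a| = N + \beta - \alpha$ when $a \notin [c,d]$, the latter sign records exactly whether $|d-a| \leq |b-a|$. In each of the four cases both bounds on $p$ sit in definite unit intervals, and the count of valid $p$ works out to $\min(m,l)$, $\min(m,l-1)$, $\min(m-1,l)$, or $\min(m-1,l-1)$ according respectively to the four cases $(a\in[c,d],\ |d-a|\leq|b-a|)$, $(a\in[c,d],\ |d-a|>|b-a|)$, $(a\notin[c,d],\ |d-a|\leq|b-a|)$, and $(a\notin[c,d],\ |d-a|>|b-a|)$.

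Finally, multiplying by $b^{a,b}_l b^{c,d}_m$ and summing, the elementary identity
\[
\sum_{l,m \geq 1} b^{a,b}_l b^{c,d}_m \min(l,m) \;=\; \sum_{i\geq 1}\Big(\sum_{l\geq i}b^{a,b}_l\Big)\Big(\sum_{m\geq i}b^{c,d}_m\Big) \;=\; M(\pi^{[a,b]},\pi^{[c,d]}),
\]
together with the observation that replacing $\min(l,m)$ by $\min(l-1,m)$ (resp.\ $\min(l,m-1)$) amounts to replacing $\pi^{[a,b]}$ by $(\pi^{[a,b]})'$ (resp.\ $\pi^{[c,d]}$ by $(\pi^{[c,d]})'$) inside $M$, matches the four counts with the four right-hand sides of the statement. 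The main obstacle is the careful bookkeeping of cyclic arithmetic, especially in the case $a\notin[c,d]$ where $|d-a|=N+\beta-\alpha$; once the four unit intervals for $(\beta-\alpha-\gamma)/N$ are correctly identified, the rest reduces to linear algebra of chain modules and an identity about columns of Young diagrams.
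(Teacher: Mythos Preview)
Your proposal is correct and follows essentially the same approach as the paper: both start from Lemma~\ref{Ndeter} to reduce to $\sum_l b^{a,b}_l \dim(V_a \cap V^{c,d} \cap V^{N(l-1)+|b-a|+1})$, count basis vectors of the chain module $V^{c,d}$ lying in the intersection according to the same four cases, and then perform the same summation identity. Your use of $\min(l,m)$ as an intermediate and the systematic cyclic bookkeeping via $\alpha,\beta,\gamma$ is a slightly cleaner packaging, but the argument is the paper's.
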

\begin{proof}Let $N\in S(\pi^{[a,b]},\pi^{[c,d]})$. Each vector $v^{a,b}_l(k)$ with $1\leq k \leq b_l^{a,b}$ can take any value in the vector space $V_a\cap V^{c,d} \cap V^{N\cdot (l-1) + |b-a|+1}$ and so the dimension of $S(\pi^{a,b},\pi^{[c,d]})$ is given by 
\[ B(\pi^{[a,b]},\pi^{[c,d]})= \sum_{l\geq 0} b_l^{a,b}\cdot \dim \left( V_a\cap V^{c,d} \cap V^{N\cdot (l-1) +|b-a|+1} \right). \]
Counting the number of basis vectors of $V^{c,d}$ that lie in $V_a$ we see there are four possibilities for $\dim \left( V_a\cap V^{c,d} \cap V^{N\cdot (l-1) +|b-a|+1} \right)$:
\[  \left. \begin{array}{ll} 
\sum_{i=1}^{l} ib_i^{c,d} + l\sum_{i\geq l} b_i^{c,d}&\textrm{ if } a\in [c,d] \textrm{ and } |d-a| \leq |b-a|  \\
\sum_{i=1}^{l-1} ib_i^{c,d} + (l-1)\sum_{i\geq l} b_i^{c,d} & \textrm{ if }  a \in [c,d] \textrm{ and } |d-a| > |b-a|  \\
\sum_{i=1}^{l} ib_{i+1}^{c,d} + l\sum_{i\geq l} b_{i+1}^{c,d} & \textrm{ if } a\not\in [c,d] \textrm{ and } |d-a| \leq |b-a|  \\
\sum_{i=1}^{l-1} ib_{i+1}^{c,d} + (l-1)\sum_{i\geq l} b_{i+1}^{c,d}  & \textrm{ if } a\not\in [c,d] \textrm{ and } |d-a| > |b-a|. 
\end{array} \right .\]
Consider the first case $a\in [c,d]$ and $|d-a| \leq |b-a|$ then
\begin{eqnarray*}
B(\pi^{[a,b]},\pi^{[c,d]}) & = & \sum_{l\geq 1} b_l^{a,b} \cdot  \left( \sum_{i=1}^l ib_i^{c,d} + l\sum_{i\geq l} b_i^{c,d} \right) \\ 
& = & \sum_{i\geq 1} \left( \sum_{l\geq i} b^{a,b}_l \right) \cdot \left( \sum_{l\geq i} b^{c,d}_l \right) \\
 &=& M(\pi^{[a,b]},\pi^{[c,d]}).
\end{eqnarray*}
The other three cases are identical. The relabeling of the partitions in these cases is encoded by the operation $\pi \mapsto \pi '$.
\end{proof}
Now we turn to computing the dimensions $T(\pi^{[a,b]},\pi^{[c,d]})$ of the spaces $F(\pi^{[a,b]},\pi^{[c,d]})$. This will be more intricate. 
\begin{lem}\label{LimI_1I_3}
Suppose $a\in I_1\cup I_3$ and $L\in F(\pi^{[a,b]},\pi^{[c,d]})$ then the map $L$ is uniquely determined by its value on the vectors $v^{a,b}_l(k)$. Moreover the only restriction on the image of such a vector is that it lie in a linear subspace
\[ Lv^{a,b}_l(k) \in \left\{ \begin{array}{ll} 
V_a\cap V^{N\cdot (l-1) + |b-a| +1}\cap V^{c,d} &\textrm{ if } a\in I_1 \textrm{ and }b\not\in I_2 \\
V_a\cap V^{N\cdot (l-1) + |b-a| }\cap V^{c,d} & \textrm{ if } a\in I_1 \textrm{ and } b\in I_2 \\
V_{a-1}\cap V^{N\cdot (l-1) + |b-a| +2}\cap V^{c,d} &\textrm{ if } a\in I_3 \textrm{ and }b\not\in I_2 \\
V_{a-1}\cap V^{N\cdot (l-1) + |b-a| +1}\cap V^{c,d}& \textrm{ if } a\in I_3 \textrm{ and } b\in I_2 .
\end{array}\right. \] 
\end{lem}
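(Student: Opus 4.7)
The plan is to follow the outline of Lemma \ref{Ndeter}: exhibit the $\mathbb{C}[H]$-basis $\{H^p v\}_{0\le p\le m-1}$ of the summand of $V^{a,b}$ generated by $v=v^{a,b}_l(k)$, with $m=N(l-1)+|b-a|+1$ and $H^m v=0$; show that each $L(H^p v)$ is a determined expression in $L(v)$; and read off the image constraint by imposing $L(H^p v)=0$ for all $p\ge m$. The new feature compared to Lemma \ref{Ndeter} is that $L$ does not simply commute with $H$, but satisfies one of four Jacobian relations depending on which of the strata $I_1$, $I_2$, $I_3$ the relevant vertex lies in.

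After killing the cut arrows $h^-_i$ with $i-\h\notin I_2$, the surviving Jacobian relations of $J_{\sigma,C}$ that involve components of $L$ reduce to exactly four families: (i) $r_{j+1}h^+_{j+\h}=h^+_{j+\h}r_j$ for $j,j+1\in I_1$; (ii) $h^+_{N'-\h}r_{N'-1}=h^-_{N'+\h}h^+_{N'+\h}h^+_{N'-\h}$ at the $I_1\to I_2$ boundary; (iii) $r_0 h^+_{N-\h}=h^+_{N-\h}h^+_{N-\frac{3}{2}}h^-_{N-\frac{3}{2}}$ at the $I_3\to I_1$ boundary; and (iv) $h^-_{i+1}h^+_{i+1}h^+_i=h^+_ih^+_{i-1}h^-_{i-1}$ for half-integers $i$ with $i-\h\in I_3$ and $i+\h\in I_2$. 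A direct induction on $p$ based on these four relations yields the closed form
\[
L(H^p v)=\begin{cases}H^{p+\varepsilon(a+p)}L(v)&\text{if }a+p\in I_1\cup I_3,\\0&\text{if }a+p\in I_2,\end{cases}
\]
where the integer shift $\varepsilon(j)$ equals $0$ on $I_1$ and $-1$ on $I_3$ when $a\in I_1$, and equals $+1$ on $I_1$ and $0$ on $I_3$ when $a\in I_3$. The $\pm1$ jumps in $\varepsilon$ are forced exactly at the two boundary crossings (ii) and (iii), where a single application of $h^+$ is traded for the triple product $h^+h^+h^-$.

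Imposing $L(H^p v)=0$ for every $p\ge m$ then pins down the image of $L(v)$. Since $a+m\equiv b+1\pmod N$, the binding equation is $H^{m+\varepsilon(b+1)}L(v)=0$ when $b+1\in I_1\cup I_3$, while if $b+1\in I_2$ the equation $L(H^m v)=0$ is automatic and the binding one instead comes from $p=m+1$ (for which $b+2\in I_3$), reading $H^{m+1+\varepsilon(b+2)}L(v)=0$. Running through the four combinations $(a\in I_1$ or $I_3)\times(b\in I_2$ or not$)$ produces the exponents $N(l-1)+|b-a|+1$, $N(l-1)+|b-a|$, $N(l-1)+|b-a|+2$ and $N(l-1)+|b-a|+1$ in the statement. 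Combining these with the unavoidable restrictions $L(v)\in V_a$ for $a\in I_1$ (respectively $V_{a-1}$ for $a\in I_3$) and $L(v)\in V^{c,d}$, inherited from the definition of $F(\pi^{[a,b]},\pi^{[c,d]})$, reproduces the four subspaces of the lemma.

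The main obstacle is the uniform bookkeeping at the two $I_1\leftrightarrow I_2\cup I_3$ boundaries: it is precisely the extra factor of $H$ hidden inside the triple products of (ii) and (iii) that causes $\varepsilon$ to jump, and one must track these jumps carefully as $p$ cycles around $\hI$ in order to obtain the clean monomial recursion above. Once this is in place the converse is essentially free: given any $L(v)$ in the prescribed subspace, the same displayed formula defines an element of $F(\pi^{[a,b]},\pi^{[c,d]})$, as verified by running the same case analysis in reverse on the Jacobian relations.
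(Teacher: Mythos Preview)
Your proof is correct and follows essentially the same line as the paper's own argument: both determine $L$ on the cyclic basis $\{H^p v^{a,b}_l(k)\}$ inductively from the superpotential relations, obtain the closed formula $L(H^p v)=H^{p+\varepsilon}L(v)$ (with the paper writing out the cases $a\in I_1$ and $a\in I_3$ separately rather than packaging them via your shift function $\varepsilon$), and then read off the image constraint from the vanishing $H^m v=0$. Your derivation of the exponent via the case split on $b+1\in I_1,I_2,I_3$ is a touch more explicit than the paper, which simply asserts that ``the exponent $t$ is read off from the defining relations,'' but the content is identical.
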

\begin{proof}To define the linear map $L$ on the space $V^{a,b}$ it suffices to define its value at each of the basis vectors
\[ \{ H^rv_l^{a,b}(k) \mid 0 \leq r \leq N\cdot (l-1) +|b-a| , 1 \leq k \leq b_l^{a,b}  \}. \]
However for $L\in F( \pi^{[a,b]},\pi^{[c,d]})$ we know that the pair $(L,H(\pi^{[a,b]}))\in R(\{\pi^{[a,b]}\})$ satisfy the relations coming from the superpotential:
\[\begin{array}{llll}
r_i h^+_{i-1/2} &=& h_{i-1/2}^+r_{i-1} &\textrm{ for } i\in [1,N'-1]\cap I_1 \\
r_0h^+_{N-1/2} &=&h^+_{N-1/2} h^+_{N-3/2}  h^-_{N-3/2} &\\
h^-_{N'+1/2} h^+_{N'+1/2}h^+_{N'-1/2}&=& h^+_{N'-1/2}r_{N'-1} & \\
h^-_{i+3/2} h^+_{i+3/2}h^+_{i+1/2}&=&h^+_{i+1/2} h^+_{i-1/2}  h^-_{i-1/2}& \textrm{ for } i=[N'+1,N-3 ]\cap I_3.
\end{array}\]
As in Lemma \ref{Ndeter} once the value of $L$ is determined for $v^{a,b}_l(k)$ it is uniquely determined  for all $H^r v^{a,b}_l(k)$ by the condition that the above relations be satisfied for the pair $(L,H(\pi^{[a,b]})$. To be precise if $a\in I_1$ we have
\[  L: H^{r}(v^{a,b}_l(k)) \mapsto 
\left\{ \begin{array}{ll} 
H^{r}L (v^{a,b}_l(k))& \textrm{ if } a+r \in I_1 \\  
0 & \textrm{ if } a+r \in I_2 \\
H^{r-1}L (v^{a,b}_l(k)) & \textrm{ if } a+r \in I_3 
\end{array} \right. \]
and if  $a\in I_3$ then
\[  L: H^{r}(v^{a,b}_l(k)) \mapsto 
\left\{ \begin{array}{ll} 
H^{r+1}L (v^{a,b}_l(k))& \textrm{ if } a+r \in I_1 \\  
0 & \textrm{ if } a+r \in I_2 \\
H^{r}L (v^{a,b}_l(k)) & \textrm{ if } a+r \in I_3 .
\end{array} \right. \]
Since $L\in F(\pi^{[a,b]},\pi^{[c,d]})$ by definition its image must lie in the space $V^{c,d}$, also if $a\in I_1$ then $L:V_a\to V_a$ and if $a\in I_3$ then $L:V_a\to V_{a-1}$. The only further condition on the image of a vector $v^{a,b}_l(k)$ is that its image be killed by a high enough power of $H$. It is given that $H^{N\cdot (l-1) + |b-a|+1}v^{a,b}_l(k)=0$ so then $H^t(Lv^{a,b}_l(k))=0$ where the exponent $t$ is read off for the defining relations on $L$ above. In the separate cases
\[ Lv^{a,b}_l(k) \in \left\{ \begin{array}{ll} 
V_a\cap V^{N\cdot (l-1) + |b-a| +1}\cap V^{c,d} &\textrm{ if } a\in I_1 \textrm{ and }b\not\in I_2 \\
V_a\cap V^{N\cdot (l-1) + |b-a| }\cap V^{c,d} & \textrm{ if } a\in I_1 \textrm{ and } b\in I_2 \\
V_{a-1}\cap V^{N\cdot (l-1) + |b-a| +2}\cap V^{c,d} &\textrm{ if } a\in I_3 \textrm{ and }b\not\in I_2 \\
V_{a-1}\cap V^{N\cdot (l-1) + |b-a| +1}\cap V^{c,d}& \textrm{ if } a\in I_3 \textrm{ and } b\in I_2 .
\end{array}\right. \] 
proving the result.
 \end{proof}

We have a result similar to Lemma \ref{LimI_1I_3} when $a\in I_2$.
\begin{lem}\label{LimI_2}
Suppose $a\in I_2$ and $L\in F(\pi^{[a,b]},\pi^{[c,d]})$ then the map $L$ is uniquely determined by its value on the vectors $Hv^{a,b}_l(k)$. Moreover the only restriction on the image of such a vector is that it lie in a linear subspace
\[ L(Hv^{a,b}_l(k)) \in \left\{ \begin{array}{ll} 
V_a\cap V^{N\cdot (l-1) + |b-a| +1}\cap V^{c,d} &\textrm{if } b\not\in I_2 \\
V_a\cap V^{N\cdot (l-1) + |b-a| }\cap V^{c,d} &\textrm{if } b\in I_2 
\end{array}\right. \] 
\end{lem}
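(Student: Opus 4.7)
The plan is to mirror the proof of Lemma \ref{LimI_1I_3}, with the ``base'' vector shifted from $v^{a,b}_l(k)$ to $Hv^{a,b}_l(k)$. The starting observation is that for $a \in I_2$, no summand of $L$ has $V_a$ in its domain: the loops $r_i$ require $i \in I_1$, and the maps $h^-_{i+1/2}\colon V_{i+1}\to V_i$ have domain $V_{i+1}$ with $i\in I_2$, so $i+1 \in I_3$, never $I_2$. Consequently $L(v^{a,b}_l(k)) = 0$, and the first potentially nontrivial value is $w := L(Hv^{a,b}_l(k))$, which lies in $V_a$ since $a+1 \in I_3$ and $L$ acts on $V_{a+1}$ as the arrow $h^-_{a+1/2}\colon V_{a+1} \to V_a$.

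Next I would show that once $w$ is prescribed, the Jacobi relations determine $L(H^r v^{a,b}_l(k))$ for every $r \geq 0$. By induction on $r$ using the four families of relations listed in the proof of Lemma \ref{LimI_1I_3} (the loop commutations inside $I_1$, the three-cycle relation inside $I_3$, and the two transition relations at $i=0$ and $i=N'-1$), one produces a propagation formula which reads $L(H^r v^{a,b}_l(k)) = 0$ when $a+r \in I_2$ and $L(H^r v^{a,b}_l(k)) = H^{r-k(r)} w$ otherwise, where the integer shift $k(r)$ is determined by how many $I_2$-vertices have been crossed so that the image lands in the correct component $V_j$. The induction step is checked by exactly the same manipulations of the four relations as in Lemma \ref{LimI_1I_3}.

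Finally, the constraint on $w$ arises from two sources: the requirement $w \in V^{c,d}$ is built into the definition of $F(\pi^{[a,b]},\pi^{[c,d]})$, and consistency with $H^{N(l-1)+|b-a|+1} v^{a,b}_l(k) = 0$ forces the propagation formula to vanish at the first $r$ past $R_{\max}:=N(l-1)+|b-a|$ at which it produces a nonzero power of $w$. A short case split on the location of $b$ gives the result: if $b\notin I_2$, the first such $r$ is either at $R_{\max}+1 \in I_1$ or at $R_{\max}+2 \in I_3$ after skipping one $I_2$-vertex, and the formula yields $H^{R_{\max}+1}w=0$, i.e.\ $w\in V^{N(l-1)+|b-a|+1}$; if $b\in I_2$, then $b+1\in I_3$ and the rule $L(H^{R_{\max}+1} v) = H^{R_{\max}} w$ is triggered immediately, giving the stronger constraint $w\in V^{N(l-1)+|b-a|}$. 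This reproduces the stated subspace in each case, and the main technical step is the careful bookkeeping of the shift $k(r)$ across the $I_1$--$I_2$ and $I_2$--$I_3$ boundaries.
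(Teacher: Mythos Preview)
Your proposal is correct and follows essentially the same approach as the paper: observe that $L$ vanishes on $v^{a,b}_l(k)$ since no component of $L$ has domain $V_a$ when $a\in I_2$, propagate the value $w=L(Hv^{a,b}_l(k))$ to all $H^r v^{a,b}_l(k)$ via the Jacobi relations, and read off the subspace constraint from the vanishing at $r=N(l-1)+|b-a|+1$. The paper records the propagation as the closed formula $L(H^r v)=H^r w$, $0$, or $H^{r-1}w$ according as $a+r\in I_1$, $I_2$, $I_3$, which is a special case of your shift $k(r)$; otherwise the arguments are identical.
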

\begin{proof}Again we know that to define the linear map $L$ on the space $V^{a,b}$ it suffices to define its value at each of the basis vectors
\[ \{ H^rv_l^{a,b}(k) \mid 0 \leq r \leq N\cdot (l-1) +|b-a| , 1 \leq k \leq b_l^{a,b}  \}. \]
Since by definition if $a\in I_2$ then $Lv^{a,b}_l(k)=0$ the map is already trivially determined on these vectors and their image does not suffice to determine the map in general. However if we consider the vectors $Hv^{a,b}_l(k)$ then once the value of $L$ is determined for $Hv^{a,b}_l(k)$ it is uniquely determined  for all $H^r v^{a,b}_l(k)$ by the condition that the relations (see Lemma \ref{LimI_1I_3}) be satisfied by the pair $(L,H(\pi^{[a,b]})$. To be precise if $a\in I_2$ we have
\[  L: H^{r}(v^{a,b}_l(k)) \mapsto 
\left\{ \begin{array}{ll} 
H^{r}L (Hv^{a,b}_l(k))& \textrm{ if } a+r \in I_1 \\  
0 & \textrm{ if } a+r \in I_2 \\
H^{r-1}L(Hv^{a,b}_l(k)) & \textrm{ if } a+r \in I_3 .
\end{array} \right. \] 
By definition we know that the image of $L$ lies in $V^{c,d}$ and also that for $a\in I_2$ we have $L:V_{a+1}\to V_a$. As before the only remaining condition on the image of $v^{a,b}_l(k)$ is that it be killed by a high enough power of $H$. From the definition of $L$ above we see that
\[ L(Hv^{a,b}_l(k)) \in \left\{ \begin{array}{ll} 
V_a\cap V^{N\cdot (l-1) + |b-a| +1}\cap V^{c,d} &\textrm{if } b\not\in I_2 \\
V_a\cap V^{N\cdot (l-1) + |b-a| }\cap V^{c,d} &\textrm{if } b\in I_2 
\end{array}\right. \] 
proving the result.
\end{proof}
The following notation collects the dimensions of all the vector spaces encountered in the last two Lemmas.
\begin{defn} We define integers
\[ d_{a,b:c,d}(l) =\left\{ \begin{array}{ll} 
\dim( V_a\cap V^{N\cdot (l-1) + |b-a| +1}\cap V^{c,d} )&\textrm{ if } a\in I_1\cup I_2 \textrm{ and }b\not\in I_2 \\
\dim( V_a\cap V^{N\cdot (l-1) + |b-a| }\cap V^{c,d} ) & \textrm{ if } a\in I_1\cup I_2 \textrm{ and } b\in I_2 \\
\dim( V_{a-1}\cap V^{N\cdot (l-1) + |b-a| +2}\cap V^{c,d} )&\textrm{ if } a\in I_3 \textrm{ and }b\not\in I_2 \\
\dim( V_{a-1}\cap V^{N\cdot (l-1) + |b-a| +1}\cap V^{c,d} ) & \textrm{ if } a\in I_3 \textrm{ and } b\in I_2.
\end{array}\right. \]
\end{defn}
From Lemmas \ref{LimI_1I_3} and \ref{LimI_2} we deduce the dimension of the spaces $F(\pi^{[a,b]},\pi^{[c,d]})$.
\begin{cor}\label{Tdim}If $a\in I_1\cup I_2$ and $b \not\in I_2$ then 
\[ T(\pi^{[a,b]},\pi^{[c,d]}) = \left\{ \begin{array}{ll} M(\pi^{[a,b]},\pi^{[c,d]}) &  \textrm{ if } a\in [c,d] \textrm{ and } |d-a| \leq |b-a|  \\
M((\pi^{[a,b]})',\pi^{[c,d]}) & \textrm{ if }  a \in [c,d] \textrm{ and } |d-a| > |b-a|  \\
M(\pi^{[a,b]} ,(\pi^{[c,d]})') & \textrm{ if } a\not\in [c,d] \textrm{ and } |d-a| \leq |b-a|  \\
M((\pi^{[a,b]}) ',(\pi^{[c,d] })') & \textrm{ if } a\not\in [c,d] \textrm{ and } |d-a| > |b-a|. \end{array} \right .  \]
If $a\in I_1\cup I_2$ and $b \in I_2$ then
\[ T(\pi^{[a,b]},\pi^{[c,d]}) = \left\{ \begin{array}{ll} M(\pi^{[a,b]},\pi^{[c,d]}) &  \textrm{ if } a\in [c,d] \textrm{ and } |d-a| \leq |b-a| -1 \\
M((\pi^{[a,b]})',\pi^{[c,d]}) & \textrm{ if }  a \in [c,d] \textrm{ and } |d-a| > |b-a|  -1\\
M(\pi^{[a,b]} ,(\pi^{[c,d]})') & \textrm{ if } a\not\in [c,d] \textrm{ and } |d-a| \leq |b-a| -1 \\
M((\pi^{[a,b]}) ',(\pi^{[c,d] })') & \textrm{ if } a\not\in [c,d] \textrm{ and } |d-a| > |b-a|-1. \end{array} \right .  \]
If $a\in I_3$ and $b\not\in I_2$ then
\[ T(\pi^{[a,b]},\pi^{[c,d]}) = \left\{ \begin{array}{ll} M(\pi^{[a,b]},\pi^{[c,d]}) &  \textrm{ if } a-1\in [c,d] \textrm{ and } |d-(a-1)| \leq |b-a| +1 \\
M((\pi^{[a,b]})',\pi^{[c,d]}) & \textrm{ if }  a-1 \in [c,d] \textrm{ and } |d-(a-1)| > |b-a|+1  \\
M(\pi^{[a,b]} ,(\pi^{[c,d]})') & \textrm{ if } a-1\not\in [c,d] \textrm{ and } |d-(a-1)| \leq |b-a|+1  \\
M((\pi^{[a,b]}) ',(\pi^{[c,d] })') & \textrm{ if } a-1\not\in [c,d] \textrm{ and } |d-(a-1)| > |b-a|+1. \end{array} \right .  \]
If $a\in I_3$ and $b\in I_2$ then
\[ T(\pi^{[a,b]},\pi^{[c,d]}) = \left\{ \begin{array}{ll} M(\pi^{[a,b]},\pi^{[c,d]}) &  \textrm{ if } a-1\in [c,d] \textrm{ and } |d-(a-1)| \leq |b-a| \\
M((\pi^{[a,b]})',\pi^{[c,d]}) & \textrm{ if }  a-1 \in [c,d] \textrm{ and } |d-(a-1)| > |b-a| \\
M(\pi^{[a,b]} ,(\pi^{[c,d]})') & \textrm{ if } a-1\not\in [c,d] \textrm{ and } |d-(a-1)| \leq |b-a|  \\
M((\pi^{[a,b]}) ',(\pi^{[c,d] })') & \textrm{ if } a-1\not\in [c,d] \textrm{ and } |d-(a-1)| > |b-a|. \end{array} \right .  \]
\end{cor}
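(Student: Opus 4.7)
The plan is to mirror the proof of Corollary~\ref{Bdim}, using Lemmas~\ref{LimI_1I_3} and \ref{LimI_2} in place of Lemma~\ref{Ndeter}. The role played there by the subspace $V_a \cap V^{c,d} \cap V^{N(l-1)+|b-a|+1}$ is now played by $V_{a^{*}} \cap V^{c,d} \cap V^{N(l-1)+\tau^{*}+1}$, where the shifted pair $(a^{*},\tau^{*})$ is read off from the scenario:
\[
(a^{*},\tau^{*}) = \begin{cases}(a,\ |b-a|), & a\in I_1\cup I_2,\ b\notin I_2,\\ (a,\ |b-a|-1), & a\in I_1\cup I_2,\ b\in I_2,\\ (a-1,\ |b-a|+1), & a\in I_3,\ b\notin I_2,\\ (a-1,\ |b-a|), & a\in I_3,\ b\in I_2.\end{cases}
\]

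First I would observe that in each scenario, an element $L\in F(\pi^{[a,b]},\pi^{[c,d]})$ is freely determined by a distinguished family of $b^{a,b}_l$ generating vectors per $l$: the vectors $Lv^{a,b}_l(k)$ when $a\in I_1\cup I_3$, and the vectors $L(Hv^{a,b}_l(k))$ when $a\in I_2$; moreover each image is constrained to lie in $V_{a^{*}}\cap V^{c,d}\cap V^{N(l-1)+\tau^{*}+1}$. This immediately gives
\[
T(\pi^{[a,b]},\pi^{[c,d]}) \;=\; \sum_{l\geq 1} b^{a,b}_l \cdot \dim\bigl(V_{a^{*}}\cap V^{c,d}\cap V^{N(l-1)+\tau^{*}+1}\bigr).
\]

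Second, I would count this dimension by repeating the basis-vector argument of Corollary~\ref{Bdim} verbatim, with $a$ replaced by $a^{*}$ and $|b-a|$ replaced by $\tau^{*}$. The four sub-cases according to the dichotomies $a^{*}\in[c,d]$ versus $a^{*}\notin[c,d]$ and $|d-a^{*}|\leq\tau^{*}$ versus $|d-a^{*}|>\tau^{*}$ then yield the four quadratic expressions $M(\pi^{[a,b]},\pi^{[c,d]})$, $M((\pi^{[a,b]})',\pi^{[c,d]})$, $M(\pi^{[a,b]},(\pi^{[c,d]})')$ and $M((\pi^{[a,b]})',(\pi^{[c,d]})')$ respectively, as in Corollary~\ref{Bdim} but with the conditions rephrased in the shifted parameters. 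Substituting the four scenarios above into these four sub-cases recovers exactly the sixteen cases stated in the corollary.

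The main obstacle, such as it is, is purely bookkeeping: one must track carefully how the superpotential relations used in the proof of Lemma~\ref{LimI_1I_3} redistribute the action of $L$ along each Jordan chain, and in particular why they force $L$ to vanish on $V_i$ for $i\in I_2$, so that the natural generating vectors for those chains become $Hv^{a,b}_l(k)$ rather than $v^{a,b}_l(k)$. Since this has already been handled in Lemmas~\ref{LimI_1I_3} and \ref{LimI_2}, the passage to Corollary~\ref{Tdim} is a mechanical case analysis with no further geometric content.
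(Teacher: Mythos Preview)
Your proposal is correct and follows essentially the same approach as the paper. The paper packages the four scenarios via the notation $d_{a,b;c,d}(l)$ rather than your shifted pair $(a^{*},\tau^{*})$, but this is purely cosmetic: both reduce via Lemmas~\ref{LimI_1I_3} and~\ref{LimI_2} to the formula $T(\pi^{[a,b]},\pi^{[c,d]})=\sum_{l\geq 1}b^{a,b}_l\cdot\dim(V_{a^{*}}\cap V^{c,d}\cap V^{N(l-1)+\tau^{*}+1})$ and then recycle the basis-counting from Corollary~\ref{Bdim}.
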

\begin{proof}We know that if $a\in I_1\cup I_3$ (resp. $a\in I_2$) then the map $L\in F(\pi^{[a,b]},\pi^{[c,d]})$ is determined by its value at the vectors $v^{a,b}_l(k)$ (resp. $Hv^{a,b}_l(k)$) for $1\leq k \leq b_l^{a,b}$. In the notation of the previous definition such a vector takes values in a space of dimension $d_{a,b;c,d}(l)$. So in all cases the total dimension of the space $F(\pi^{[a,b]},\pi^{[c,d]})$ equals
\[ T(\pi^{[a,b]},\pi^{[c,d]})= \sum_{l\geq 1} b_{l}^{a,b} \cdot d_{a,b;c,d}(l). \]
In the above definition of $d_{a,b;c,d}(l)$ there are four possible forms depending on the value of $a$ and $b$. Lets consider the first case when $a\in I_1\cup I_2$ and $b\not\in I_2$. Then we have that $d_{a,b;c,d}(l) = \dim\left(  V_a\cap V^{N\cdot (l-1) + |b-a| +1}\cap V^{c,d}  \right)$. Counting the number of basis vectors of $V^{c,d}$ that lie in $V_a$ we see there are four possibilities for $\dim \left( V_a\cap V^{c,d} \cap V^{N\cdot (l-1) +|b-a|+1} \right)$:
\[  \left. \begin{array}{ll} 
\sum_{i=1}^{l} ib_i^{c,d} + l\sum_{i\geq l} b_i^{c,d}&\textrm{ if } a\in [c,d] \textrm{ and } |d-a| \leq |b-a|  \\
\sum_{i=1}^{l-1} ib_i^{c,d} + (l-1)\sum_{i\geq l} b_i^{c,d} & \textrm{ if }  a \in [c,d] \textrm{ and } |d-a| > |b-a|  \\
\sum_{i=1}^{l} ib_{i+1}^{c,d} + l\sum_{i\geq l} b_{i+1}^{c,d} & \textrm{ if } a\not\in [c,d] \textrm{ and } |d-a| \leq |b-a|  \\
\sum_{i=1}^{l-1} ib_{i+1}^{c,d} + (l-1)\sum_{i\geq l} b_{i+1}^{c,d}  & \textrm{ if } a\not\in [c,d] \textrm{ and } |d-a| > |b-a|. 
\end{array} \right .\]
In the first case $ a\in [c,d] \textrm{ and } |d-a| \leq |b-a|$ and
\begin{eqnarray*}
T(\pi^{[a,b]},\pi^{[c,d]}) & = & \sum_{l\geq 1} b_l^{a,b} \cdot  \left( \sum_{i=1}^l ib_i^{c,d} + l\sum_{i\geq l} b_i^{c,d} \right) \\ 
& = & \sum_{i\geq 1} \left( \sum_{l\geq i} b^{a,b}_l \right) \cdot \left( \sum_{l\geq i} b^{c,d}_l \right) \\
 &=& M(\pi^{[a,b]},\pi^{[c,d]}).
\end{eqnarray*}
In the second case $a \in [c,d] \textrm{ and } |d-a| > |b-a|$ and
\begin{eqnarray*}
T(\pi^{[a,b]},\pi^{[c,d]}) & = & \sum_{l\geq 1} b_l^{a,b} \cdot  \left( \sum_{i=1}^{l-1} ib_i^{c,d} + (l-1)\sum_{i\geq l} b_i^{c,d} \right) \\ 
& = & \sum_{i\geq 1} \left( \sum_{l\geq i} b^{a,b}_{l+1} \right) \cdot \left( \sum_{l\geq i} b^{c,d}_l \right) \\
 &=& M((\pi^{[a,b]})',\pi^{[c,d]}).
\end{eqnarray*}
In the third case $a\not\in [c,d] \textrm{ and } |d-a| \leq |b-a|$ and
\begin{eqnarray*}
T(\pi^{[a,b]},\pi^{[c,d]}) & = & \sum_{l\geq 1} b_l^{a,b} \cdot  \left(\sum_{i=1}^{l} ib_{i+1}^{c,d} + l\sum_{i\geq l} b_{i+1}^{c,d}  \right) \\ 
& = & \sum_{i\geq 1} \left( \sum_{l\geq i} b^{a,b}_l \right) \cdot \left( \sum_{l\geq i} b^{c,d}_{l+1} \right) \\
 &=& M(\pi^{[a,b]},(\pi^{[c,d]})').
\end{eqnarray*}
Finally in the fourth case $a\not\in [c,d] \textrm{ and } |d-a| > |b-a|$ and we have
\begin{eqnarray*}
T(\pi^{[a,b]},\pi^{[c,d]}) & = & \sum_{l\geq 1} b_l^{a,b} \cdot  \left( \sum_{i=1}^{l-1} ib_{i+1}^{c,d} + (l-1)\sum_{i\geq l} b_{i+1}^{c,d} \right) \\ 
& = & \sum_{i\geq 1} \left( \sum_{l\geq i} b^{a,b}_{l+1} \right) \cdot \left( \sum_{l\geq i} b^{c,d}_{l+1} \right) \\
 &=& M((\pi^{[a,b]})',(\pi^{[c,d]})').
\end{eqnarray*}
This completes the situation when $a\in I_1\cup I_2$ and $b\not\in I_2$. In the other situations $a\in I_1\cup I_2$ and $b\in I_2$, or $a\in I_3$ and $b\not\in I_2$, or  $a\in I_3$ and $b\in I_2$. All of these cases can be dealt with in a similar manner.
\end{proof}
Now we have computed all the dimensions $T(\pi^{[a,b]},\pi^{[c,d]})$ and $B(\pi^{[a,b]},\pi^{[c,d]})$. The next lemma combines corollaries \ref{Bdim} and \ref{Tdim} to compute their difference. We see that in most cases there is an exact cancellation.
\begin{lem}\label{dif}We have
\[ T(\pi^{[a,b]},\pi^{[c,d]}) = B(\pi^{[a,b]},\pi^{[c,d]}) \textrm{ aside from the following cases, } \]
Case 1: $a\in I_1\cup I_2$, $b=d \in I_2$ 
\[ \begin{array}{ll} 
M((\pi^{[a,b]})',\pi^{[c,b]}) - M(\pi^{[a,b]},\pi^{[c,b]}) & \textrm{ if } a\in [c,b]  \\ 
M((\pi^{[a,b]})',(\pi^{[c,b]})') - M(\pi^{[a,b]},(\pi^{[c,b]})') & \textrm{ if } a\not\in [c,b].
\end{array}  \]
Case 2: $a\in I_3$, $b\not\in I_2$, $d=a-1\in I_2$
\[ \begin{array}{ll} 
M(\pi^{[a,b]},\pi^{[a,a-1]}) - M((\pi^{[a,b]})',\pi^{[a,a-1]}) & \textrm{ if } a=c  \\ 
M(\pi^{[a,b]},\pi^{[c,a-1]}) - M((\pi^{[a,b]})',(\pi^{[c,a-1]})') & \textrm{ if } a\not =c.
\end{array} \]
Case 3: $a\in I_3$, $b\not \in I_2$, $a=c$, $d\not =  a-1$,
\[ \begin{array}{ll} 
M(\pi^{[a,b]},(\pi^{[a,d]})') - M(\pi^{[a,b]},\pi^{[a,d]}) & \textrm{ if } |d-a| \leq |b-a|, \\
M((\pi^{[a,b]})',(\pi^{[a,d]})') - M((\pi^{[a,b]})',\pi^{[a,d]}) & \textrm{ if } |d-a| > |b-a|. 
 \end{array} \] 
Case 4: $a\in I_3$, $b\in I_2$, $d=a-1$,
\[ \begin{array}{ll}
M(\pi^{[a,b]},\pi^{[a,a-1]}) - M((\pi^{[a,b]})',\pi^{[a,a-1]}) & \textrm{ if } a=c \textrm{ and } b\not = a-1 \\
M(\pi^{[a,a-1]},\pi^{[c,a-1]}) - M(\pi^{[a,a-1]},(\pi^{[c,a-1]})') & \textrm{ if } a\not=c \textrm{ and } b=a-1 \\
M(\pi^{[a,b]},\pi^{[c,a-1]}) - M((\pi^{[a,b]})',(\pi^{[c,a-1]})') & \textrm{ if } a\not =c \textrm{ and } b\not=a-1 .
  \end{array} \]
Case 5: $a\in I_3$, $b\in I_2$, $a-1\in [c,d]$, $d\not =a-1$, $b=d$
\[ M((\pi^{[a,b]})',\pi^{[c,b]}) -M(\pi^{[a,b]},\pi^{[c,b]}). \]
Case 6: $a\in I_3$, $b\in I_2$, $a-1\not\in [c,d]$, $a=c$, $|d-a|<|b-a|$
\[ M(\pi^{[a,b]},(\pi^{[a,d]})')-M(\pi^{[a,b]},\pi^{[a,d]}). \]
Case 7: $a\in I_3$, $b\in I_2$, $a-1\not \in [c,d]$
\[ \begin{array}{ll}
M((\pi^{[a,b]})',(\pi^{[a,b]})') - M(\pi^{[a,b]},\pi^{[a,b]}) & \textrm{ if } a=c \textrm{ and } b=d \\
M((\pi^{[a,b]})',(\pi^{[a,d]})') - M((\pi^{[a,b]})',\pi^{[a,d]}) & \textrm{ if } a=c \textrm{ and } |d-a| >|b-a| \\
M((\pi^{[a,b]})',(\pi^{[c,b]})') - M(\pi^{[a,b]},(\pi^{[c,b]})') & \textrm{ if } a\not =c \textrm{ and } b=d .
\end{array} \]
\end{lem}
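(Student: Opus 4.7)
The proof will be a careful case analysis built entirely on the closed-form expressions for $T(\pi^{[a,b]},\pi^{[c,d]})$ and $B(\pi^{[a,b]},\pi^{[c,d]})$ established in Corollaries \ref{Tdim} and \ref{Bdim}. Each of $T$ and $B$ equals one of the four expressions $M(\pi^{[a,b]},\pi^{[c,d]})$, $M((\pi^{[a,b]})',\pi^{[c,d]})$, $M(\pi^{[a,b]},(\pi^{[c,d]})')$, or $M((\pi^{[a,b]})',(\pi^{[c,d]})')$, selected by two binary conditions: whether a vertex ($a$ for $B$; either $a$ or $a-1$ for $T$, depending on which of $I_1\cup I_2$ or $I_3$ contains $a$) lies in the cyclic interval $[c,d]$, and whether a cyclic distance ($|d-a|$ for $B$; $|d-a|$, $|d-(a-1)|$, etc.\ for $T$, shifted according to the subcase) meets a bound involving $|b-a|$, $|b-a|-1$, or $|b-a|+1$. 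The whole lemma reduces to determining when these pairs of selectors choose the same form and what the difference is when they do not.

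The first step is to note that in ``generic'' configurations the selectors coincide. Provided $d\ne a-1$ we have $|d-(a-1)|=|d-a|+1$, and provided further that $c\ne a$ the conditions $a\in[c,d]$ and $a-1\in[c,d]$ are equivalent. When $b\notin I_2$ these identities force the $T$-condition $|d-(a-1)|\leq|b-a|+1$ to match the $B$-condition $|d-a|\leq|b-a|$; when $b\in I_2$, the $T$-condition $|d-(a-1)|\leq|b-a|$ matches $|d-a|\leq|b-a|-1$, which is the same as $|d-a|\leq|b-a|$ unless equality $|d-a|=|b-a|$ holds, forcing $d=b$. Thus the only configurations in which $T\ne B$ are the ``boundary'' ones $d=a-1$, $c=a$ (relevant only when $a\in I_3$), and $d=b$ (relevant only when $b\in I_2$).

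The second step is to enumerate these boundary configurations and read off $T-B$ from the corollaries in each. The subcase $a\in I_1\cup I_2$, $b=d\in I_2$ yields Case~1, split according to whether $a\in[c,b]$. The subcase $a\in I_3$, $b\notin I_2$, $d=a-1$ yields Case~2, split according to whether $c=a$; the subcase $a\in I_3$, $b\notin I_2$, $c=a$, $d\ne a-1$ yields Case~3, split according to whether $|d-a|\leq|b-a|$. When $a\in I_3$ and $b\in I_2$ hold simultaneously, several boundary phenomena may stack; this collection is exactly partitioned into Cases~4--7, organized by whether $d=a-1$, whether $a-1\in[c,d]$, whether $a=c$, and whether $b=d$. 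In every row one simply substitutes the selector values into the two corollaries and takes the difference.

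The main obstacle is purely bookkeeping: making sure the seven enumerated cases are mutually exclusive and exhaust all configurations where $T\ne B$. The actual row-by-row computations are one-line invocations of Corollaries \ref{Tdim} and \ref{Bdim}. The subtle point throughout is the cyclic arithmetic: the identity $|d-(a-1)|=|d-a|+1$ \emph{fails} precisely when $d=a-1$, and cyclic interval membership $[c,d]$ behaves non-monotonically at the endpoints $c$ and $d$, so one must check those endpoints separately when shifting the base vertex from $a$ to $a-1$.
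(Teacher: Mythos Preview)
Your proposal is correct and takes essentially the same approach as the paper: the paper's proof is the single sentence ``Compare corollaries \ref{Bdim} and \ref{Tdim},'' and your write-up is precisely a careful execution of that comparison, identifying the boundary configurations ($d=b$ when $b\in I_2$; $d=a-1$ or $c=a$ when $a\in I_3$) at which the selector conditions in the two corollaries disagree.
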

\begin{proof}Compare corollaries \ref{Bdim} and \ref{Tdim}.\end{proof}
Our aim throughout this appendix has been to prove Proposition \ref{difference} and deduce that the difference $\sum_{0\leq a,b,c,d \leq N-1}  T(\pi^{[a,b]},\pi^{[c,d]}) - B(\pi^{[a,b]},\pi^{[c,d]})$ equals 
\[-\frac{1}{2}\sum_{i\in I_2} \left( \sum_{ b \neq i } l(\pi^{[i+1,b]}) - \sum_{ c\neq i+1 }l(\pi^{[c,i]}) \right)^2 
- \half \sum_{a\in I_3 , b\not\in I_2} \sum_{i\geq 1}(b_i^{a,b})^2 - \half\sum_{a\not\in I_3 , b\in I_2}  \sum_{i\geq 1} (b_i^{a,b})^2. \]
So all that remains is to check this sum agrees with the values we computed. First we will transform it into a expression in terms of the $M(\pi^{[a,b]},\pi^{[c,d]})$. To do this we need the simple identities
\begin{eqnarray*} 
M(\pi^{[a,b]},\pi^{[c,d]}) - M((\pi^{[a,b]})',(\pi^{[c,d]})') &=& \sum_{l\geq 1}\left( \sum_{i\geq l} b^{a,b}_i \cdot \sum_{i\geq l} b^{c,d}_i  -   \sum_{i\geq l} b^{a,b}_{i+1} \cdot \sum_{i\geq l} b^{c,d}_{i+1} \right) \\
&=& \sum_{i\geq 1} b^{a,b}_i \cdot \sum_{i\geq 1} b^{c,d}_i\\
 & = &  l(\pi^{[a,b]})\cdot l(\pi^{[c,d]})
 \end{eqnarray*}
and
\begin{eqnarray*} 
M(\pi^{[a,b]},\pi^{[a,b]}) - M((\pi^{[a,b]})',\pi^{[a,b]}) &=& \sum_{l\geq 1} \left( \sum_{i\geq l} b^{a,b}_i \cdot \sum_{i\geq l} b^{a,b}_i  -   \sum_{i\geq l} b^{a,b}_{i+1} \cdot \sum_{i\geq l} b^{a,b}_{i} \right) \\
&=& \sum_{l\geq 1} b^{a,b}_l \cdot \sum_{i\geq l} b^{c,d}_i   \\
 & = &  \half l(\pi^{[a,b]})^2 + \half \sum_{l\geq 1} (b^{a,b}_l)^2.
 \end{eqnarray*}
 Using these two identities and some simple algebraic manipulations we can rewrite Proposition \ref{difference} as the statement that the difference $\sum_{0\leq a,b,c,d \leq N-1}  T(\pi^{[a,b]},\pi^{[c,d]}) - B(\pi^{[a,b]},\pi^{[c,d]})$ equals
\[ \begin{array}{llllll}
& & \sum_{i\in I_2} \sum_{b\not = i , c\not=i+1} & M(\pi^{[i+1,b]},\pi^{[c,i]}) &-& M((\pi^{[i+1,b]})',(\pi^{[c,i]})')  \\
& +&  \sum_{i\in I_2}  \sum_{\substack{b < d\\b,d\not = i }}& M((\pi^{[i+1,b]})',(\pi^{[i+1,d]})') &-& M(\pi^{[i+1,b]},\pi^{[i+1,d]}) \\
& +& \sum_{i\in I_2}  \sum_{\substack{a < c \\ a,c \not= i+1}} &M((\pi^{[a,i]})',(\pi^{[c,i]})') &-& M(\pi^{[a,i]},\pi^{[c,i]}) \\
& +& \sum_{a\in I_3 ,b\in I_2, b\not = a-1} &M((\pi^{[a,b]})',(\pi^{[a,b]})') &-&M(\pi^{[a,b]},\pi^{[a,b]}) \\
& +& \sum_{[a,b]\in S} &M((\pi^{[a,b]})',\pi^{[a,b]}) &-&M(\pi^{[a,b]},\pi^{[a,b]}).
 \end{array} \]
 We will take a systematic approach, accounting for these terms one by one, all in all we will check nine separate cases.
 
First let us assess the contribution from terms involving partitions $\pi^{[r,s]}$ with $r,s \in I_1$. Comparing with Lemma \ref{dif} in all seven cases there is no discrepancy when $a,b\in I_1$ or $c,d\in I_1$ and therefore there is no contribution from these terms in agreement with the above sum. 
 
Secondly we assess the contribution from terms involving partitions $\pi^{[r,s]}$ with $r\in I_1$ and $s\in I_2$. Considering Lemma \ref{dif} we note the following cases,
\[ \begin{array}{l}
\textrm{Case 1: } a\in I_1\cup I_2, b\in I_2 , b=d \\ 
\begin{array}{ll} 
M((\pi^{[a,b]})',\pi^{[c,b]}) - M(\pi^{[a,b]},\pi^{[c,b]}) & \textrm{ if } a\in [c,b]  \\ 
M((\pi^{[a,b]})',(\pi^{[c,b]})') - M(\pi^{[a,b]},(\pi^{[c,b]})') & \textrm{ if } a\not\in [c,b].
\end{array}  \\ \\
\textrm{Case 2: } a\in I_3, b\not \in I_2, c \in I_1, d=a-1\in I_2  \\ 
M(\pi^{[a,b]},\pi^{[c,a-1]}) - M((\pi^{a,b})',(\pi^{[c,a-1]})'). \\ \\
\textrm{Case 4: } a\in I_3, b\in I_2, c\in I_1, d=a-1\\ 
 \begin{array}{ll}
M(\pi^{[a,a-1]},\pi^{[c,a-1]}) - M(\pi^{[a,a-1]},(\pi^{[c,a-1]})') & \textrm{ if } b=a-1 \\
M(\pi^{[a,b]},\pi^{[c,a-1]}) - M((\pi^{[a,b]})',(\pi^{[c,a-1]})') & \textrm{ if } b\not=a-1 .
  \end{array} \\ \\
\textrm{Case 5: } a\in I_3, b\in I_2, c\in I_2, b=d, a-1\in [c,b], b\not = a-1 \\
M((\pi^{[a,b]})',\pi^{[c,b]}) - M(\pi^{[a,b]},\pi^{[c,b]}). \\ \\
\textrm{Case 7: } a \in I_3, b\in I_2, c \in I_1, b=d, a-1\not \in [c,b] \\
M((\pi^{[a,b]})' , (\pi^{[c,b]})') -  M(\pi^{[a,b]},(\pi^{[c,b]})') .
\end{array} \]
The sum total of these cases gives
\[ 
\begin{array}{ccccc}  
&\displaystyle \sum_{a\in I_1 , b\in I_2} &M((\pi^{[a,b]})',\pi^{[a,b]})&-& M(\pi^{[a,b]},\pi^{[a,b]}) \vspace{3pt}
\\
+&\displaystyle \sum_{\substack{a< c: a,c\not = b+1 \\ a\in I_1, b\in I_2 \textrm{ or } c\in I_1,b\in I_2 } }  &
M( (\pi^{[a,b]})' ,(\pi^{[c,b]})' ) & - & M( \pi^{[a,b]} ,\pi^{[c,b]} ) \vspace{3pt}
\\
+& \displaystyle\sum_{b\not = i  , c\in I_1,i \in I_2 } & M( \pi^{[i+1,b]} \pi^{[c,i]} ) & -& M(( \pi^{[i+1,b]})' ,(\pi^{[c,i]})' )
\end{array} \]
 this accounts for all the terms involving partitions $\pi^{[a,b]}$ with $a\in I_1$ and $b\in I_2$.
 
Thirdly we assess the contribution from terms involving partitions $\pi^{[r,s]}$ with $r\in I_1$ and $s\in I_3$. Comparing with lemma \ref{dif} in all seven cases there is no discrepancy when $a\in I_1$ and $b\in I_3$ or $c\in I_1$ and $d\in I_3$ and therefore there is no contribution from these terms in agreement with Proposition \ref{difference}. We have now observed the correct contributions from all terms involving partitions $\pi^{[r,s]}$ where $r\in I_1$.  

Fourthly we will consider contributions from terms involving partitions $\pi^{[r,s]}$ where $r\in I_2$ and $s\in I_1$. As in the first and third cases on comparing with lemma \ref{dif} in all seven cases there is no discrepancy when $a\in I_2$ and $b\in I_1$ or $c\in I_2$ and $d\in I_1$. Again this is in full agreement with Proposition \ref{difference}.

Fifthly we consider contributions from terms involving partitions $\pi^{[r,s]}$ where $r\in I_2$ and $s\in I_2$. This time on comparing with lemma \ref{dif} we observe some nontrivial contributions as desired. This case is almost identical to when $r\in I_1$ and $s\in I_2$. In the lemma the following cases contribute
\[ \begin{array}{l}
\textrm{Case 1: } a\in I_1\cup I_2, b\in I_2 , b=d \\ 
\begin{array}{ll} 
M((\pi^{[a,b]})',\pi^{[c,b]}) - M(\pi^{[a,b]},\pi^{[c,b]}) & \textrm{ if } a\in [c,b]  \\ 
M((\pi^{[a,b]})',(\pi^{[c,b]})') - M(\pi^{[a,b]},(\pi^{[c,b]})') & \textrm{ if } a\not\in [c,b].
\end{array}  \\ \\
\textrm{Case 2: } a\in I_3, b\not \in I_2, c \in I_2, d=a-1\in I_2  \\ 
M(\pi^{[a,b]},\pi^{[c,a-1]}) - M((\pi^{a,b})',(\pi^{[c,a-1]})'). \\ \\
\textrm{Case 4: } a\in I_3, b\in I_2, c\in I_2, d=a-1\\ 
 \begin{array}{ll}
M(\pi^{[a,a-1]},\pi^{[c,a-1]}) - M(\pi^{[a,a-1]},(\pi^{[c,a-1]})') & \textrm{ if } b=a-1 \\
M(\pi^{[a,b]},\pi^{[c,a-1]}) - M((\pi^{[a,b]})',(\pi^{[c,a-1]})') & \textrm{ if } b\not=a-1 .
  \end{array} \\ \\
\textrm{Case 5: } a\in I_3, b\in I_2, c\in I_2, b=d, a-1\in [c,b], b\not = a-1 \\
M((\pi^{[a,b]})',\pi^{[c,b]}) - M(\pi^{[a,b]},\pi^{[c,b]}). \\ \\
\textrm{Case 7: } a \in I_3, b\in I_2, c \in I_2, b=d, a-1\not \in [c,b] \\
M((\pi^{[a,b]})' , (\pi^{[c,b]})') -  M(\pi^{[a,b]},(\pi^{[c,b]})') .
\end{array} \]
The sum total of these cases gives
\[ \begin{array}{ccccc}
&\displaystyle\sum_{a\in I_2, b\in I_2} & M((\pi^{[a,b]})',\pi^{[a,b]}) & - & M(\pi^{[a,b]} ,\pi^{[a,b]}) \vspace{3pt}\\
+&\displaystyle\sum_{\substack{a<c: a,c\not = b+1 \\ a ,b\in I_2 \textrm{ or } c, b\in I_2 } } &
M((\pi^{[a,b]})',(\pi^{[c,b]})') & - & M(\pi^{[a,b]},\pi^{[c,b]}) \vspace{3pt}\\ 
+&\displaystyle\sum_{ c \in I_2 i \in I_2 b\not = i} & M( \pi^{[i+1,b]} \pi^{[c,i]} ) & - & M(( \pi^{[i+1,b]})' ,(\pi^{[c,i]})' ).
\end{array} \]
These terms again agree with those of Proposition \ref{dif}. 

Sixthly we move to consider terms involving partitions $\pi^{[r,s]}$ where $r\in I_2$ and $s\in I_3$. considering the lemma we see that there is no contribution for these terms as desired.  We have now observed the correct contributions from all terms involving partitions $\pi^{[r,s]}$ where $r\in I_1\cup I_2$ only the cases when $r\in I_3$ remain, we may restrict to consider only those differences $T(\pi^{[a,b]},\pi^{[c,d]}) -B(\pi^{[a,b]},\pi^{[c,d]}) $ where both $a,c\in I_3$.

Seventhly we consider terms involving partitions $\pi^{[r,s]}$ where $r\in I_3$ and $s\in I_1$. Considering lemma \ref{dif} we see that the following cases give non-trivial contributions.
\[ \begin{array}{l}
\textrm{Case 2: } a\in I_3, b\not \in I_2, d=a-1\in I_2  \\ 
\begin{array}{ll} 
M(\pi^{[a,b]},\pi^{[a,a-1]}) - M((\pi^{[a,b]})',\pi^{[a,a-1]}) & \textrm{ if } a=c  \\ 
M(\pi^{[a,b]},\pi^{[c,a-1]}) - M((\pi^{[a,b]})',(\pi^{[c,a-1]})') & \textrm{ if } a\not =c.
\end{array} \\ \\
\textrm{Case 3: }  a\in I_3, b\not \in I_2, a=c, d\not= a-1  \\
\begin{array}{ll} 
M(\pi^{[a,b]},(\pi^{[a,d]})') - M(\pi^{[a,b]},\pi^{[a,d]}) & \textrm{ if } |d-a| \leq |b-a|, \\
M((\pi^{[a,b]})',(\pi^{[a,d]})') - M((\pi^{[a,b]})',\pi^{[a,d]}) & \textrm{ if } |d-a| > |b-a|. 
 \end{array} \\ \\
\textrm{Case 6: } a\in I_3, b\in I_2, a-1\not\in [c,d], a=c, |d-a|<|b-a|, d \in I_1 \\
 M(\pi^{[a,b]},(\pi^{[a,d]})')-M(\pi^{[a,b]},\pi^{[a,d]}). \\ \\
\textrm{Case 7: } a\in I_3, b\in I_2, a-1\not\in [c,d], d \in I_1 \\
\begin{array}{ll}
M((\pi^{[a,b]})',(\pi^{[a,d]})') - M((\pi^{[a,b]})',\pi^{[a,d]}) & \textrm{ if } a=c \textrm{ and } |d-a| >|b-a|. \\ \\
\end{array}
\end{array} \]
The sum total of these cases gives
\[ \begin{array}{ccccc}
& \displaystyle\sum_{a\in I_3, b\in I_1} & M((\pi^{[a,b]})',\pi^{[a,b]}) & - & M(\pi^{[a,b]} ,\pi^{[a,b]}) \vspace{3pt}\\
+& \displaystyle\sum_{\substack{b<d: b,d\not = a \\ a\in I_2 \textrm{ and } b\in I_1 \textrm{ or } d\in I_1 } } &
M((\pi^{[a+1,b]})',(\pi^{[a+1,d]})') & - & M(\pi^{[a+1,b]},\pi^{[a+1,d]}) \vspace{3pt}\\ 
+& \displaystyle\sum_{i \in I_2 b\in I_1} & M( \pi^{[i+1,b]} \pi^{[c,i]} ) & - & M(( \pi^{[i+1,b]})' ,(\pi^{[c,i]})' ).
\end{array} \]
Eighthly we consider terms involving partitions $\pi^{[r,s]}$ where $r\in I_3$ and $s\in I_3$. Here considering lemma \ref{dif} we see that the following cases give non-trivial contributions.
\[ \begin{array}{l}
\textrm{Case 2: } a\in I_3, b\not \in I_2, d=a-1\in I_2  \\ 
\begin{array}{ll} 
M(\pi^{[a,b]},\pi^{[a,a-1]}) - M((\pi^{[a,b]})',\pi^{[a,a-1]}) & \textrm{ if } a=c  \\ 
M(\pi^{[a,b]},\pi^{[c,a-1]}) - M((\pi^{[a,b]})',(\pi^{[c,a-1]})') & \textrm{ if } a\not =c.
\end{array} \\ \\
\textrm{Case 3: }  a\in I_3, b\not \in I_2, a=c, d\not= a-1  \\
\begin{array}{ll} 
M(\pi^{[a,b]},(\pi^{[a,d]})') - M(\pi^{[a,b]},\pi^{[a,d]}) & \textrm{ if } |d-a| \leq |b-a|, \\
M((\pi^{[a,b]})',(\pi^{[a,d]})') - M((\pi^{[a,b]})',\pi^{[a,d]}) & \textrm{ if } |d-a| > |b-a|. 
 \end{array} \\ \\
\textrm{Case 6: } a\in I_3, b\in I_2, a-1\not\in [c,d], a=c, |d-a|<|b-a|, d \in I_3 \\
 M(\pi^{[a,b]},(\pi^{[a,d]})')-M(\pi^{[a,b]},\pi^{[a,d]}). \\ \\
\textrm{Case 7: } a\in I_3, b\in I_2, a-1\not\in [c,d], d \in I_3 \\
\begin{array}{ll}
M((\pi^{[a,b]})',(\pi^{[a,d]})') - M((\pi^{[a,b]})',\pi^{[a,d]}) & \textrm{ if } a=c \textrm{ and } |d-a| >|b-a|. \\ \\
\end{array}
\end{array} \]
The sum total of these cases gives
\[ \begin{array}{ccccc}
&\displaystyle\sum_{a\in I_3, b\in I_3} & M((\pi^{[a,b]})',\pi^{[a,b]}) & - & M(\pi^{[a,b]} ,\pi^{[a,b]}) \vspace{3pt}\\
+&\displaystyle\sum_{\substack{b<d: b,d\not = a \\ a\in I_2 \textrm{ and } b\in I_3 \textrm{ or } d\in I_3 } } &
M((\pi^{[a+1,b]})',(\pi^{[a+1,d]})') & - & M(\pi^{[a+1,b]},\pi^{[a+1,d]})\vspace{3pt} \\ 
+&\displaystyle\sum_{i \in I_2 b\in I_3} & M( \pi^{[i+1,b]} \pi^{[c,i]} ) & - & M(( \pi^{[i+1,b]})' ,(\pi^{[c,i]})' ).
\end{array} \]
Now we have accounted for all terms involving partitions other that the case $\pi^{[r,s]}$ with $r\in I_3$ and $s\in I_2$. So we can now restrict to consider terms $T(\pi^{[a,b]},\pi^{[c,d]}) - B(\pi^{[a,b]},\pi^{[c,d]})$ with $a,c\in I_3$ and $b,d\in I_2$ as follows.

Ninthly we consider terms involving partitions $\pi^{[r,s]}$ where $r\in I_3$ and $s\in I_2$. Here considering lemma \ref{dif} we see that the following cases give non-trivial contributions.
\[ \begin{array}{l}
 \textrm{Case 4: } a\in I_3, b\in I_2, d=a-1, \\
\begin{array}{ll}
M(\pi^{[a,b]},\pi^{[a,a-1]}) - M((\pi^{[a,b]})',\pi^{[a,a-1]}) & \textrm{ if } a=c \textrm{ and } b\not = a-1 \\
M(\pi^{[a,a-1]},\pi^{[c,a-1]}) - M(\pi^{[a,a-1]},(\pi^{[c,a-1]})') & \textrm{ if } a\not=c \textrm{ and } b=a-1 \\
M(\pi^{[a,b]},\pi^{[c,a-1]}) - M((\pi^{[a,b]})',(\pi^{[c,a-1]})') & \textrm{ if } a\not =c \textrm{ and } b\not=a-1 .
  \end{array} \\ \\
\textrm{Case 5: }a\in I_3, b\in I_2, a-1\in [c,d], d\not =a-1, b=d \\
 M((\pi^{[a,b]})',\pi^{[c,b]}) -M(\pi^{[a,b]},\pi^{[c,b]}). \\ \\
\textrm{Case 6: } a\in I_3, b\in I_2, a-1\not\in [c,d], a=c, |d-a|<|b-a| \\
 M(\pi^{[a,b]},(\pi^{[a,d]})')-M(\pi^{[a,b]},\pi^{[a,d]}). \\ \\
\textrm{Case 7: } a\in I_3, b\in I_2, a-1\not\in [c,d]  \\
 \begin{array}{ll}
M((\pi^{[a,b]})',(\pi^{[a,b]})') - M(\pi^{[a,b]},\pi^{[a,b]}) & \textrm{ if } a=c \textrm{ and } b=d \\
M((\pi^{[a,b]})',(\pi^{[a,d]})') - M((\pi^{[a,b]})',\pi^{[a,d]}) & \textrm{ if } a=c \textrm{ and } |d-a| >|b-a| \\
M((\pi^{[a,b]})',(\pi^{[c,b]})') - M(\pi^{[a,b]},(\pi^{[c,b]})') & \textrm{ if } a\not =c \textrm{ and } b=d .
\end{array}
\end{array} \]
The sum total of these cases gives the remaining terms
\[\begin{array}{ccccc}
&\displaystyle\sum_{a\in I_3 , b\in I_2, a-1\not= b} & M((\pi^{[a,b]})',(\pi^{[a,b]})') &-& M(\pi^{[a,b]},\pi^{[a,b]}) \vspace{3pt}\\
+&\displaystyle\sum_{\substack{ b<d : b,d \neq a \\ a\in I_2 \textrm{ and } b\in I_2  \textrm{ or } d\in I_2 }} 
& M((\pi^{[a+1,b]})',(\pi^{[a+1,d]})') &- &M(\pi^{[a+1,b]},\pi^{[a+1,d]}) \vspace{3pt}\\
+&\displaystyle\sum_{\substack{ a<c: a,c\neq b \\ b\in I_2 \textrm{ and } a\in I_3 \textrm{ or } c\in I_3}} 
& M((\pi^{[a,b]})',(\pi^{[c,b]})') &- & M(\pi^{[a,b]},\pi^{[c,b]}) \vspace{3pt}\\
+&\displaystyle\sum_{\substack{b\not = i \textrm{ and } a-1 \not = i \\ i \in I_2 \textrm{ and } b\in I_2 \textrm{ or } c\in I_3 }}
& M(\pi^{[i+1,b]},\pi^{[c,i]}) &- & M((\pi^{[i+1,b]})',(\pi^{[c,i]})') .
\end{array}\]
This completes the proof of Proposition \ref{difference}.

\bibliographystyle{amsalpha}
\bibliography{bib.bib}

\end{document}